\documentclass[12pt,a4paper,reqno]{amsart} 
\pagestyle{plain}
\usepackage{amssymb}
\usepackage{latexsym}
\usepackage{amsmath}
\usepackage{mathrsfs}
\usepackage[X2,T1]{fontenc}
\usepackage{cite}

\usepackage{calc}                   
\date{\today}

\newcommand{\scal}[2]{\langle #1,#2\rangle}
\newcommand{\rr}[1]{\mathbf R^{#1}}

\newcommand{\nm}[2]{\Vert #1\Vert _{#2}}

\newcommand{\ep}{\varepsilon}
\newcommand{\fy}{\varphi}
\newcommand{\cdo}{\, \cdot \, }
\newcommand{\supp}{\operatorname{supp}}
\newcommand{\eabs}[1]{\langle #1\rangle}     
\newcommand{\vrum}{\vspace{0.1cm}}
\newcommand{\back}[1]{\setminus #1}
\newcommand{\loc}{\operatorname{loc}}

\newcommand{\WF}{\operatorname{WF}}
\newcommand{\re}{\operatorname{Re}}
\newcommand{\singsupp}{\operatorname{sing \, supp}}
\newcommand{\im}{\operatorname{Im}}
\newcommand{\FB}{\mathscr F\!\mathscr B}

\newcommand\leftidx[3]{%
{\vphantom{#2}}#1#2#3%
}

\newcommand{\ltrans}[1]{\leftidx{^\mathrm{t}}{\!#1}{}}

\setcounter{section}{\value{section}-1}   %
\numberwithin{equation}{section}          

\newtheorem{thm}{Theorem}
\numberwithin{thm}{section}
\newtheorem*{tom}{\rubrik}
\newcommand{\rubrik}{}
\newtheorem{prop}[thm]{Proposition}
\newtheorem{cor}[thm]{Corollary}
\newtheorem{lemma}[thm]{Lemma}

\theoremstyle{definition}

\newtheorem{defn}[thm]{Definition}

\theoremstyle{remark}

\newtheorem{rem}[thm]{Remark}              %

\author{Karoline Johansson}

\address{Department of Computer science, Physics and Mathematics, Linn{\ae}us University, V{\"a}xj{\"o}, Sweden\footnote{Former address: Department of Mathematics and Systems Engineering,
V{\"a}xj{\"o} University, Sweden}}

\email{karoline.johansson@lnu.se}

\title{Association between temperate distributions and analytical functions in the context of wave-front sets}

\keywords{Wave-front, Fourier, Banach function space}

\subjclass[2000]{35A18}

\frenchspacing

\begin{document}

\begin{abstract}
Let $\mathscr B$ be a translation invariant Banach function space (BF-space). In this paper we prove that every temperate distribution $f$ can be associated with a function $F$ analytic in the convex tube $\Omega=\{z\in \mathbf C^d;\, |\im z|<1\}$ such that the wave-front set of $f$ of Fourier BF-space types in intersection with $\rr d \times S^{d-1}$ consists of the points $(x,\xi)$ such that $F$ does not belong to the Fourier BF-space at $x-i\xi$.
\end{abstract}

\maketitle

\section{Introduction}\label{sec0}
Wave-front sets of Fourier Banach function types where introduced
by Coriasco, Johansson and Toft in \cite{CJT1}. Roughly speaking,
the wave-front set of Fourier Banach function type, $\WF_{\FB}(f)$, of a
distribution $f$, consists of all pairs
$(x_0,\xi_0)$ such that no localization of the distribution $f$ at
$x_0$ belongs to $\FB$ in the direction $\xi_0$. Several
properties of classical wave-front sets (with respect to
smoothness) can be found in Hörmander \cite{Ho1}. One of these are
mapping properties for pseudo-differential operators (with smooth
symbols) on wave-front sets which were generalized to
Fourier Lebesgue type by Pilipovic, Teofanov
and Toft in \cite{PTT1}. These properties were also proved to hold
for wave-front sets of Fourier Banach function types. (Cf. Coriasco, Johansson and Toft
\cite{CJT1}.)

In this paper we consider another property of wave-front sets
concerning association between a temperate distribution and an
analytic function, which was proved for classical wave-front sets
by Hörmander in \cite{Ho1}. More precisely, Hörmander showed that
every temperate distribution $f$ can be associated with a
function $F$ analytic in the convex tube $\{z\in \mathbf C^d; \,
|\im z|<1\}$ such that
\begin{equation}\label{fochF}
f=\int_{|\xi|=1} \, F(\cdot +i\xi)\, d\xi,
\end{equation}
and
\begin{multline*}
(\rr d\times S^{d-1})\cap \WF_L(f)\\[1 ex]=\{(x,\xi);\, |\xi|=1,\, F \, \text{is not in} \, C^L \, \text{at} \, x-i\xi\}.
\end{multline*}
Here $\WF_L(f)$ is the wave-front set with respect to a class of smooth functions $C^L$. (Cf. Section 8.4 in Hörmander \cite{Ho1}.)

\par

In this paper we generalize this result to wave-front sets of Fourier Banach function types. We show that for every temperate distribution $f$ there exists a function $F$ with the properties given before, satisfying \eqref{fochF} and such that
\begin{multline}\label{vagfront M}
(\rr d\times S^{d-1})\cap \WF_{\FB}(f)\\[1 ex]=\{(x,\xi);\, |\xi|=1,\, F \, \text{is not in}  \, \FB \, \text{at} \, x-i\xi\}.
\end{multline}

\par

Since every Lebesgue space is a Banach function space we get by
choosing $\mathscr B=L^p$ that the analogous result for wave-front sets of Fourier Lebesgue types is contained in \eqref{vagfront M} as a special case.

\par

As shown later on in this paper, analogous results hold also for the weighted cases as well as inf types and modulation space types of wave-front sets. The latter is a direct consequence of the identification of wave-front sets of Fourier BF-spaces types with wave-front sets of modulation space types.

\par

The modulation spaces were introduced by Feichtinger in \cite{F1}, and
the theory was developed in
\cite{Feichtinger3, Feichtinger4, Feichtinger5, Grochenig0a}. The
modulation space $M(\omega, \mathscr B )$, where $\omega$ is
an appropriate weight function (or time-frequency shift) on phase space $\rr {2d}$,
appears as the set of temperate (ultra-)distributions
whose short-time Fourier transform belong to the weighted Banach space
$\mathscr B(\omega )$.
This family of modulation spaces contains the (classical) modulation
spaces $M^{p,q}_{(\omega)}(\rr {2d})$ as well as the space
$W^{p,q}_{(\omega)}(\rr {2d})$ related to the Wiener amalgam spaces. In fact, these spaces which occur frequently in the time-frequency community are obtained
by choosing $\mathscr B =L^{p,q}_1(\rr {2d})$ or $\mathscr B
=L^{p,q}_2(\rr {2d})$ (see Remark 6.1 in \cite{CJT1}).

\par

The paper is organized as follows. In Section \ref{sec1} we recall
the definitions and some basic properties for translation
invariant Banach function spaces (BF-spaces) and Fourier Banach
function spaces. In Section \ref{sec2} we prove that every
temperate distribution $f$ can be associated with a function $F$
analytic in a convex tube satisfying \eqref{fochF} and
\eqref{vagfront M}. Analogous results are given in Sections \ref{sec2'}-\ref{sec2'''} for the weighted case, inf types and modulation space types, respectively. We use this result in Section \ref{sec3} to
show some further properties of these wave-front sets. In
particular we show a result about the relation between wave-front
sets of Fourier Banach function types and analytic wave-front sets.

\par

\section{Preliminaries}\label{sec1}
\par

In this section we recall some notations and basic results. The
proofs are in general omitted. In what follows we let $\Gamma$
denote an open cone in $\rr d\back 0$. If $\xi \in \rr d\back 0$
is fixed, then an open cone which contains $\xi $ is sometimes
denoted by $\Gamma_\xi$.

\par

Assume that $\omega, v\in L^\infty _{loc}(\rr d)$ are positive
functions. Then $\omega$ is called $v$-moderate if
\begin{equation}\label{moderate}
\omega (x+y) \leq C\omega (x)v(y)
\end{equation}
for some constant $C$ which is independent of $x,y\in \rr d$. If $v$
in \eqref{moderate} can be chosen as a polynomial, then $\omega$ is
called polynomially moderate. We let $\mathscr P(\rr d)$ be the set
of all polynomially moderated functions on $\rr d$. We say that $v$ is
\emph{submultiplicative} when \eqref{moderate} holds with $\omega =v$.
Throughout we assume that the submultiplicative weights are even.
If $\omega (x,\xi )\in \mathscr P(\rr {2d})$ is constant with respect
to the $x$-variable ($\xi$-variable), then we sometimes write $\omega
(\xi )$ ($\omega (x)$) instead of $\omega (x,\xi )$. In this case we
consider $\omega$ as an element in $\mathscr P(\rr {2d})$ or in
$\mathscr P(\rr d)$ depending on the situation.

\par
For any weight $\omega$ in $\mathscr P(\rr d)$ we let $L^p_{(\omega )}(\rr d)$ be the set of all
$f\in L^1_{loc}(\rr d)$ such that $f\cdot \omega \in L^p(\rr d)$.

\medspace

The Fourier transform $\mathscr F$ is the linear and continuous
mapping on $\mathscr S'(\rr d)$ which takes the form
$$
(\mathscr Ff)(\xi )= \widehat f(\xi ) \equiv (2\pi )^{-d/2}\int _{\rr
{d}} f(x)e^{-i\scal  x\xi }\, dx
$$
when $f\in L^1(\rr d)$. We recall that $\mathscr F$ is a homeomorphism
on $\mathscr S'(\rr d)$ which restricts to a homeomorphism on $\mathscr
S(\rr d)$ and to a unitary operator on $L^2(\rr d)$.

\medspace

Next we recall the definition of Banach function spaces.

\par

\begin{defn}\label{BFspaces}
Assume that $\mathscr B$ is a Banach space of complex-valued
measurable functions on $\rr d$ and that $v \in \mathscr P(\rr {d})$
is submultiplicative.
Then $\mathscr B$ is called a \emph{(translation) invariant
BF-space on $\rr d$} (with respect to $v$), if there is a constant $C$
such that the following conditions are fulfilled:
\begin{enumerate}
\item $\mathscr S(\rr d)\subseteq \mathscr
B\subseteq \mathscr S'(\rr d)$ (continuous embeddings);

\vrum

\item if $x\in \rr d$ and $f\in \mathscr B$, then $f(\cdot -x)\in
\mathscr B$, and
\begin{equation*}
\nm {f(\cdot -x)}{\mathscr B}\le Cv(x)\nm {f}{\mathscr B}\text ;
\end{equation*}

\vrum

\item if $f,g\in L^1_{loc}(\rr d)$ satisfy $g\in \mathscr B$  and $|f|
\le |g|$ almost everywhere, then $f\in \mathscr B$ and
$$
\nm f{\mathscr B}\le C\nm g{\mathscr B}\text .
$$
\end{enumerate}
\end{defn}

\par

Assume that $\mathscr B$ is a translation invariant BF-space. If $f\in
\mathscr B$ and $h\in L^\infty$, then it follows from (3) in
Definition \ref{BFspaces} that $f\cdot h\in \mathscr B$ and
\begin{equation*}
\nm {f\cdot h}{\mathscr B}\le C\nm f{\mathscr B}\nm h{L^\infty}.
\end{equation*}

\par

\begin{rem}\label{newbfspaces}
Assume that $\omega _0,v,v_0\in \mathscr P(\rr d)$ are such $v$ and
$v_0$ are submultiplicative,
$\omega _0$ is $v_0$-moderate, and assume that $\mathscr B$ is a
translation-invariant BF-space on $\rr d$ with respect to $v$. Also
let $\mathscr B_0$ be the Banach space which consists of all $f\in
L^1_{loc}(\rr d)$ such that $\nm f{\mathscr B_0}\equiv \nm {f\, \omega _0
}{\mathscr B}$ is finite. Then $\mathscr B_0$ is a translation
invariant BF-space with respect to $v_0v$.
\end{rem}

\par

For future references we note that if $\mathscr B$ is a translation
invariant BF-space with respect to the submultiplicative weight $v$ on
$\rr d$, then the convolution map $*$ on $\mathscr S(\rr d)$ extends
to a continuous mapping from $\mathscr B\times L^1_{(v)}(\rr
d)$ to $\mathscr B$, and for some constant $C$ it holds
\begin{equation}\label{propupps}
\nm {\fy *f}{\mathscr B}\le C\nm \fy{L^1_{(v)}}\nm f{\mathscr B},
\end{equation}
when $\fy \in L^1_{(v)}(\rr d)$ and $f\in \mathscr B$. In fact, if $f,g\in \mathscr S$, then $f*g\in \mathscr S\subseteq \mathscr B$ in view of the definitions, and
Minkowski's inequality gives
\begin{multline*}
\nm {f*g}{\mathscr B} =\Big \Vert \int f(\cdo -y)g(y)\, dy \Big \Vert
_{\mathscr B}
\\[1ex]
\le \int \nm { f(\cdo-y)}{\mathscr B}|g(y)|\, dy \le C\int \nm {
f}{\mathscr B}|g(y)v(y)|\, dy = C\nm { f}{\mathscr B}\nm
g{L^1_{(v)}}.
\end{multline*}

\par

Since $\mathscr S$ is dense in $L^1_{(v)}$, it follows that $\fy *f\in \mathscr B$ when $\fy \in L^1_{(v)}$ and $f\in \mathscr S$, and that \eqref{propupps} holds in this case. The result is now a consequence of Hahn-Banach's theorem.

\par

From now on we assume that each translation invariant BF-space $\mathscr B$ is such that the convolution map $*$ on $\mathscr S(\rr d)$ is \emph{uniquely} extendable
to a continuous mapping from $\mathscr B\times L^1_{(v)}(\rr
d)$ to $\mathscr B$, and that \eqref{propupps} holds when $\fy \in L^1_{(v)}(\rr d)$ and $f\in \mathscr B$. We note that $\mathscr B$ can be any mixed and weighted Lebesgue space. 

\par

In particular we then have that
$$
\nm{\int_{|y|=1} g(\cdot-y)\, dy}{\FB} \leq C \int_{|y|=1} \nm{g(\cdot-y)}{\FB} \, dy.
$$

\par

Assume that $\mathscr B$ is a translation invariant BF-space on $\rr
d$ and $\omega\in \mathscr P(\rr d)$. Then we let $\FB(\omega)$ be the set of all $f\in \mathscr S'(\rr d)$ such that
$\xi \mapsto \widehat f(\xi )\omega(\xi)$ belongs to $\mathscr
B$. It follows that $\FB(\omega) $ is a Banach space under the
norm
\begin{equation*}
\nm f{\FB(\omega)}\equiv \nm {\widehat
f \omega}{\mathscr B}.
\end{equation*}

\par

Recall that a topological vector space $V\subseteq \mathscr
D'(X)$ is called \emph{local} if $V\subseteq V_{loc}$. Here
$X\subseteq \rr d$ is open, and $V_{loc}$ consists of all $f\in
\mathscr D'(X)$ such that $\fy f \in V$ for every $\fy \in C_0^\infty
(X)$. For future references we note that if $\mathscr B$ is a
translation invariant BF-space on $\rr d$, then it follows from \eqref{propupps} that $\FB$ is a local space, i.{\,}e.
\begin{equation*}
\FB\subseteq \FB_{loc}\equiv (\FB)_{loc}.
\end{equation*}

\par
Let
\begin{equation}\label{IochK}
I(\xi)=\int_{|\omega|=1} e^{-\scal \omega \xi}\, d \omega \qquad
\text{and} \qquad K(z)=(2\pi)^{-d}\int e^{i\scal z\xi}/I(\xi)\,
d\xi. \end{equation}
These functions will play an important role when proving the main results. We therefore explicitly give properties of these functions. These results can be found in Section 8.4 in Hörmander \cite{Ho1}.

Let $I$ be given by \eqref{IochK} then we have that $I(\xi)=2\cosh \xi$ for $d=1$ and $I(\xi)=I_0(\scal \xi \xi^{1/2})$ for $d>1$. Here 
\begin{equation}
I_0(\rho)=c_{d-1}\int_{-1}^1 (1-t^2)^{(d-3)/2}e^{t\rho}\, dt,
\end{equation}
where $c_{d-1}$ is the area of $S^{d-2}$. Then $I_0$ is an even analytic function in $\mathbf C$ such that for every $\ep>0$ 
$$
I_0(\rho) =(2\pi)^{(d-1)/2}e^{\rho}\rho^{-(d-1)/2}(1+O(1/\rho))
$$ 
if $\rho\to\infty$, $|\arg \rho|<\pi/2-\ep.$
Furthermore there is a constant $C$ such that for all $\rho\in \mathbf C$ we have that 
$$
|I_0(\rho)|\leq C(1+|\rho|)^{-(d-1)/2}e^{|\operatorname{Re} \rho|}.
$$
The following lemma can be found with proof in \cite{Ho1}. 
\begin{lemma}
$K(z)$ is an analytic function in the connected open set 
$$
\widetilde{\Omega}=\{z\in \mathbf C^d;\, \scal z z \not\in (-\infty,-1]\}\supset \Omega.
$$
Here $\Omega=\{z\in \mathbf C;\, |\operatorname{Im} z|<1\}$.
Furthermore, for any closed open cone $\Gamma\subset \widetilde{\Omega}$ such that $\scal z z $ is never $\leq 0$ when $z\in \Gamma\back 0$ there is some $c>0$ such that $K(z)=O(e^{-c|z|})$ when $z\to \infty$ in $\Gamma$. We have for real $x$ and $y$ that 
\begin{equation}
|K(x+iy)|\leq K(iy)=(d-1)!(2\pi)^{-d}(1-|y|)^{-d} (1+O(1-|y|)),
\end{equation}
$|y|\to 1^-$. 
\end{lemma}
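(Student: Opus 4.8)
The plan is to use the rotational invariance of $I$ to collapse the $d$-dimensional integral defining $K$ to a single radial integral, to read off analyticity and the exponential decay from the explicit asymptotics of $I_0$ by rotating the radial contour, and to treat the boundary estimate separately by a positivity argument together with a Laplace-type expansion. First I pass to polar coordinates $\xi=r\omega$ in \eqref{IochK}. The inner spherical integral is
$$
\int_{|\omega|=1}e^{ir\scal z\omega}\,d\omega=I(-irz)=I_0(irw^{1/2}),\qquad w=\scal zz,
$$
where the first equality is the definition of $I$ and the second uses that $\eta\mapsto I(\eta)$ and $\eta\mapsto I_0(\scal\eta\eta^{1/2})$ are entire and agree for real $\eta$, together with the evenness of $I_0$. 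Hence
$$
K(z)=k(\scal zz),\qquad k(w)=(2\pi)^{-d}\int_0^\infty\frac{I_0(irw^{1/2})}{I_0(r)}\,r^{d-1}\,dr,
$$
so it suffices to study the one-variable function $k$.

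For convergence and holomorphy I would use the two estimates for $I_0$ quoted above. Since $I_0(r)>0$ for $r\ge 0$, the asymptotics give $1/I_0(r)\le C(1+r)^{(d-1)/2}e^{-r}$ for $r\ge0$, while the global bound gives $|I_0(irw^{1/2})|\le C(1+r|w^{1/2}|)^{-(d-1)/2}e^{r|\im w^{1/2}|}$. The integrand is therefore dominated by a polynomial in $r$ times $e^{-r(1-|\im w^{1/2}|)}$, which is integrable exactly when $|\im w^{1/2}|<1$; differentiating under the integral sign shows $k$ is holomorphic there. Taking the principal branch, a direct computation identifies $\{|\im w^{1/2}|<1\}$ with the interior of the parabola $\{(\im w)^2<4(\re w+1)\}$, and Cauchy--Schwarz shows this region already contains $\scal zz$ for every $z$ in the tube $\Omega=\{|\im z|<1\}$.

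The main obstacle is to continue $k$ from this parabolic region to the whole cut plane $\mathbf C\back{(-\infty,-1]}$ and to extract the conic decay. Here I would deform the ray of integration, replacing $[0,\infty)$ by $e^{i\theta}[0,\infty)$. The zeros of the entire even function $I_0$ lie on the imaginary axis (they correspond to the zeros of an ordinary Bessel function), so for any $\theta\in(-\pi/2,\pi/2)$ the rotated ray avoids them, and on it the exponential balance of numerator and denominator becomes $e^{-r(\cos\theta-|w^{1/2}||\sin(\theta+\arg w^{1/2})|)}$. Choosing $\theta$ as a function of $\arg w$ keeps this exponent positive for every $w\notin(-\infty,-1]$ and makes the integral converge there; the cut $(-\infty,-1]$ is exactly the locus where the contour can no longer be rotated clear of the imaginary-axis zeros. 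The same rotated representation, analysed as $|w|\to\infty$ with $\arg w$ fixed, yields $k(w)=O(e^{-c|w|^{1/2}})$. Finally, on a closed cone $\Gamma\subset\widetilde\Omega$ with $\scal zz\notin(-\infty,0]$ on $\Gamma\back0$, the homogeneous continuous function $z\mapsto|\scal zz|/|z|^2$ has a positive minimum on the unit sphere, so $|w^{1/2}|\ge c'|z|$ and the bound $K(z)=O(e^{-c|z|})$ follows. The delicate points are controlling the rotation against the zeros of $I_0$ and checking that the asymptotics $I_0(\rho)=(2\pi)^{(d-1)/2}e^\rho\rho^{-(d-1)/2}(1+O(1/\rho))$, valid only in $|\arg\rho|<\pi/2-\ep$, can be invoked uniformly along the deformed contours.

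The boundary estimate is comparatively soft. Since $I(\xi)>0$ for real $\xi$, for real $x,y$ we have
$$
|K(x+iy)|=(2\pi)^{-d}\Big|\int e^{i\scal x\xi-\scal y\xi}/I(\xi)\,d\xi\Big|\le(2\pi)^{-d}\int e^{-\scal y\xi}/I(\xi)\,d\xi=K(iy),
$$
so only the asymptotics of $K(iy)$ as $|y|\to1^-$ remain. Writing $K(iy)=(2\pi)^{-d}\int e^{-\scal y\xi}/I(\xi)\,d\xi$, the integrand decays slowly, like $e^{-|\xi|(1-|y|)}$, only along the ray $\xi\sim-t\,y/|y|$; I would localise to a conic neighbourhood of that ray, insert the leading asymptotics of $I_0$, and apply Watson's lemma. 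The radial integral $\int_0^\infty e^{-t(1-|y|)}t^{d-1}\,dt=(d-1)!\,(1-|y|)^{-d}$ produces the stated leading term, the angular Gaussian integral together with the $I_0$-prefactor combining to $1$ and the remaining contributions being absorbed into the factor $1+O(1-|y|)$.
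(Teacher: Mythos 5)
The paper itself gives no proof of this lemma --- it is quoted, with the remark ``can be found with proof in \cite{Ho1}'', from H\"ormander (Lemma 8.4.10 there) --- so your proposal has to be judged as a self-contained argument. Much of it is sound: the reduction $K(z)=k(\scal zz)$ with $k(w)=(2\pi)^{-d}\int_0^\infty I_0(irw^{1/2})I_0(r)^{-1}r^{d-1}\,dr$ is correct; the initial domain of holomorphy is indeed the parabolic region $\{(\im w)^2<4(\re w+1)\}$; your Cauchy--Schwarz step is exactly right, since $|x|^2|y|^2<|x|^2-|y|^2+1$ is equivalent to $(|x|^2+1)(|y|^2-1)<0$, i.e.\ to $|y|<1$; the continuation by rotating the ray to $e^{i\theta}[0,\infty)$, $\theta=-\tfrac12\arg w$, does reach every $w\notin(-\infty,0]$ (the zeros of $I_0$ being purely imaginary, at $\pm it_k$ with $t_k\ge\pi/2$); and the boundary part, $|K(x+iy)|\le K(iy)$ plus the Laplace/Watson computation of $K(iy)$, produces the stated constant $(d-1)!\,(2\pi)^{-d}(1-|y|)^{-d}$.

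The genuine gap is the conic decay $K(z)=O(e^{-c|z|})$, which you assert follows from ``the same rotated representation''. It cannot, because your estimates are triangle-inequality bounds on moduli, and those have a polynomial floor. When $s=|w|^{1/2}$ is large, convergence of the rotated integral forces $|\sin(\theta+\phi)|<\cos\theta/s$ (with $w^{1/2}=se^{i\phi}$), so the numerator is evaluated essentially on the imaginary axis, where $I_0$ oscillates and decays only like $|I_0(i\tau)|\approx\tau^{-(d-1)/2}$; there is no exponential smallness in the integrand itself. Concretely, for $r\in[1/(2s),\,1/s]$ the numerator's argument $irs\,e^{i(\theta+\phi)}$ lies in the annulus $\tfrac12\le|\zeta|\le1$, where $|I_0|\ge c_0>0$ (its zeros have modulus $>1$), while $|I_0(re^{i\theta})|\le C_0$ there; hence $\int_0^\infty|\text{integrand}|\,dr\ge (c_0/C_0)\int_{1/(2s)}^{1/s}r^{d-1}\,dr=c_1s^{-d}$. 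The same floor holds for \emph{any} deformation of the contour, since every admissible contour starts at $\rho=0$ and the integrand has modulus $\approx|\rho|^{d-1}$ until $|\rho|\sim1/s$. So no choice of $\theta$, and no modulus estimate, can yield $O(e^{-cs})$; your method tops out at polynomial decay, and the ``delicate points'' you flag (uniformity of asymptotics, avoiding zeros) are not where the difficulty lies.

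A correct argument must exploit cancellation in the oscillatory numerator. One standard route: split $I_0=H_++H_-$ into its two Hankel-type components, $H_\pm(\zeta)\sim e^{\pm\zeta}\zeta^{-(d-1)/2}\times(\text{symbol})$, so the numerator becomes $e^{\pm irs}\times(\text{symbol})$, and shift the $r$-contour of each piece vertically by $\pm i\delta$ with $\delta<t_1$, staying in the zero-free strip of $I_0$; this converts the oscillation into a factor $e^{-\delta s}$, uniformly for $\arg w$ in compact subsets of $(-\pi,\pi)$. (Equivalently, for $z\in\Gamma\cap\Omega$ one can shift the original contour in \eqref{IochK} from $\rr d$ to $\rr d+i\eta$, using that $I(\zeta)\ne0$ and $1/|I(\zeta)|\le Ce^{-|\re \zeta|}(1+|\zeta|)^{(d-1)/2}$ in a tube $|\im\zeta|<\delta$, and handle the rest of the cone through the identity $K=k(\scal zz)$.) Note also that on $\Gamma$ you need both $|\scal zz|\ge c'|z|^2$ \emph{and} $|\arg\scal zz|\le\pi-\ep$; the second follows, like the first, from compactness of $\Gamma\cap S^{2d-1}$, but your write-up only records the first.
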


\par

Furthermore 
$$
|D^\beta K(x+iy)|\leq C_\beta (1-|y|)^{-n-|\beta|}e^{-c|x|},\qquad |y|<1
$$
holds by Cauchy's inequalities. (Cf. \cite{Ho1}.)
\par

\section{Analytic functions associated with temperate distributions}\label{sec2}

In this section we show that \eqref{vagfront M} holds. Assume that
$\omega\in \mathscr P(\rr d)$. We recall that the
wave-front sets of weighted Fourier Banach function types
$\WF_{\FB(\omega)}(f)$ consists of all pairs $(x_0,\xi_0)\in \rr d
\times \rr d\back 0$ such that $$ |\fy f|_{\FB(\omega,
\Gamma_{\xi_0})}\equiv \nm{\mathscr F (\fy f)
\chi_{\Gamma_{\xi_0}}\omega}{\mathscr B} =\infty, $$ for every
open conical neighbourhood $\Gamma_{\xi_0}$ of $\xi_0$, and
$\fy\in C^{\infty}_0$ with $\fy=1 $ in some open neighbourhood $X$
of $x_0$. Here $\chi_{\Gamma_{\xi_0}}$ is the characteristic
function of $\Gamma_{\xi_0}$. (Cf. Coriasco, Johansson and Toft
\cite{CJT1}.)

\par

Let $\WF_{\FB(\omega)}(f)=\WF_{\FB}(f)$ if $\omega \equiv 1$.

\par
\begin{defn}\label{FB i x}
Assume that $f\in \mathscr D'(\rr d)$, $\mathscr B$ is a
translation invariant BF-space and $\omega \in \mathscr P(\rr d)$.
Then $f\in \mathcal \FB(\omega)$ at $x_0$ if and only if there exists $\fy\in
C^{\infty}_0$ with $\fy\equiv 1$ in a neighbourhood of $x$ such that $\fy f \in
\mathscr \FB(\omega)$.
\end{defn}
\begin{rem}
For convenience we say that $f\in \mathcal \FB$ at $x_0$ if the
statement in Definition \ref{FB i x} is true for $\omega\equiv 1$.
\end{rem}
We note that if $f$ belongs to $\FB(\omega)$ at $x_0$ then $(x_0,\xi_0)\notin\WF_{\FB(\omega)}(f)$ for any $\xi_0\in \rr d\back 0.$

\begin{defn}
For $f\in \mathscr D'(X)$ the singular support $\singsupp_{\mathscr B(\omega)} f$ is the smallest closed subset of $X$ such that $f$ is in $\FB(\omega)$ in the complement.
\end{defn}

We use the notation $\singsupp_{\mathscr B(\omega)}
f=\singsupp_{\mathscr B} f$ when $\omega\equiv 1$.
\begin{thm}
Assume that $f\in \mathscr D'(\rr d)$, $\mathscr B$ is a
translation invariant BF-space and $\omega\in\mathscr P (\rr
d)$. The projection of $\WF_{\FB(\omega)} (f)$ in $X$ is equal to
$\singsupp_{\mathscr B(\omega)} f$.
\end{thm}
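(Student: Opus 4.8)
The plan is to prove the set equality by showing that the two complements coincide. Writing $\pi(x,\xi)=x$ for the projection onto the base, the point $x_0$ lies outside $\pi(\WF_{\FB(\omega)}(f))$ exactly when $(x_0,\xi_0)\notin\WF_{\FB(\omega)}(f)$ for \emph{every} $\xi_0\in\rr d\back 0$, while $x_0\notin\singsupp_{\mathscr B(\omega)}f$ exactly when $f\in\FB(\omega)$ at $x_0$. Hence it suffices to prove the equivalence: $(x_0,\xi_0)\notin\WF_{\FB(\omega)}(f)$ for all $\xi_0$ if and only if $f\in\FB(\omega)$ at $x_0$.

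The implication from right to left is immediate and is already recorded in the excerpt: if $\fy f\in\FB(\omega)$ for some $\fy\in C_0^\infty$ equal to $1$ near $x_0$, then $|\mathscr F(\fy f)\chi_{\Gamma_{\xi_0}}\omega|\le|\widehat{\fy f}\,\omega|$ pointwise, so condition (3) of Definition \ref{BFspaces} gives $\nm{\mathscr F(\fy f)\chi_{\Gamma_{\xi_0}}\omega}{\mathscr B}<\infty$ for every cone, whence $(x_0,\xi_0)\notin\WF_{\FB(\omega)}(f)$ for all $\xi_0$.

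For the converse, assume $(x_0,\xi_0)\notin\WF_{\FB(\omega)}(f)$ for all $\xi_0\in S^{d-1}$. For each such $\xi_0$ choose an open cone $\Gamma_{\xi_0}$ and $\fy_{\xi_0}\in C_0^\infty$ with $\fy_{\xi_0}=1$ near $x_0$ so that $\mathscr F(\fy_{\xi_0}f)\chi_{\Gamma_{\xi_0}}\omega\in\mathscr B$. By compactness of $S^{d-1}$ extract a finite subcover $\Gamma_{\xi_1},\dots,\Gamma_{\xi_N}$ of $\rr d\back 0$, and pick $\fy\in C_0^\infty$ with $\fy=1$ near $x_0$ and $\supp\fy$ inside the open set where all the $\fy_{\xi_j}$ equal $1$; then $\fy=\fy\fy_{\xi_j}$ for each $j$. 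Setting $g_j=\fy_{\xi_j}f$ we get $\fy f=\fy g_j$, hence $\widehat{\fy f}=(2\pi)^{-d/2}\,\widehat\fy*\widehat{g_j}$ for every $j$. Since the cones cover $\rr d\back 0$, it is enough to show $\widehat{\fy f}\,\chi_{\Gamma_j'}\,\omega\in\mathscr B$ for each $j$, where $\Gamma_j'$ is a slightly smaller open cone whose closure meets the sphere inside $\Gamma_{\xi_j}$.

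Fix $j$ and split $\widehat{g_j}=\widehat{g_j}\chi_{\Gamma_{\xi_j}}+\widehat{g_j}(1-\chi_{\Gamma_{\xi_j}})$. For the first term one uses that $\omega$ is polynomially moderate, $\omega(\xi)\le C\omega(\eta)\eabs{\xi-\eta}^{N_0}$, so that $|\widehat\fy*(\widehat{g_j}\chi_{\Gamma_{\xi_j}})|\,\omega$ is dominated pointwise by $(|\widehat\fy|\eabs{\cdot}^{N_0})*(|\widehat{g_j}\chi_{\Gamma_{\xi_j}}|\,\omega)$; the first factor is Schwartz, hence in $L^1_{(v)}$, and the second is in $\mathscr B$ by hypothesis, so \eqref{propupps} together with condition (3) puts this term in $\mathscr B$. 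The main work is the second term: there $\eta$ ranges outside $\Gamma_{\xi_j}$ while $\xi\in\Gamma_j'$, so the cones are separated and $|\xi-\eta|\ge c(|\xi|+|\eta|)$; combining the rapid decay of $\widehat\fy$ with the polynomial bound $|\widehat{g_j}(\eta)|\le C\eabs{\eta}^{N}$ (valid since $g_j$ has compact support) yields, for every $M$, the estimate $|\widehat\fy*(\widehat{g_j}(1-\chi_{\Gamma_{\xi_j}}))(\xi)|\le C_M\eabs{\xi}^{-M}$ on $\Gamma_j'$, which after multiplication by the polynomially bounded $\omega$ is again rapidly decreasing and so in $\mathscr B$ by conditions (1) and (3). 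Summing the finitely many pieces gives $\widehat{\fy f}\,\omega\in\mathscr B$, i.e. $\fy f\in\FB(\omega)$, so $f\in\FB(\omega)$ at $x_0$. I expect the cone-separation estimate for this off-diagonal term to be the main obstacle: one must check quantitatively that convolution with the Schwartz kernel $\widehat\fy$ cannot move frequency mass from outside $\Gamma_{\xi_j}$ into the smaller cone $\Gamma_j'$, i.e. that $|\xi-\eta|\ge c(|\xi|+|\eta|)$ really produces arbitrary polynomial decay after integration in $\eta$ against $\eabs{\eta}^{N}$; the remaining steps are routine applications of \eqref{propupps}, the moderateness of $\omega$, and the domination property (3).
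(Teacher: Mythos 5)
Your proof is correct and follows essentially the same route as the paper: both directions are handled via the complement characterization, the easy direction coming from the domination property (3) of Definition \ref{BFspaces}, and the hard direction from compactness of $S^{d-1}$, a finite cover by cones, and a single common cutoff. The only difference is one of detail: the paper's proof simply asserts that $\WF_{\FB(\omega)}(f)\cap(K\times\rr d)=\emptyset$ yields one $\fy_{x_0}$ with $\fy_{x_0}f\in\FB(\omega)$ (implicitly relying on the cutoff-independence results of \cite{CJT1}), whereas you prove that step explicitly via the convolution and cone-separation estimates, which is precisely the argument needed to justify it.
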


\begin{proof}
(a) Assume that $x_0\not\in \singsupp_{\mathscr B (\omega)}(f)$. Then $f$ belongs to $\FB(\omega)$ at $x_0$. This implies that $(x_0,\xi_0)\not\in\WF_{\FB(\omega)} (f)$, for any $\xi_0\in \rr d\back 0$.

\medspace

(b) Assume that $(x_0,\xi_0)\not\in\WF_{\FB(\omega)} (f)$ for all
$\xi_0\in \rr d\back 0$. Then by the compactness of unit sphere we can choose a neighbourhood $K$ of
$x_0$ such that $\WF_{\FB(\omega)} (f)\cap (K\times \rr
d)=\emptyset$. This implies that we can choose a function
$\fy_{x_0}\in C^\infty _0$ which is equal to $1$ in a
neighbourhood $X$ of $x _0$ such that $\fy_{x_0} f \in \mathscr
\FB(\omega)$. Hence $x_0\not\in \singsupp_{\mathscr B(\omega)}
(f)$.
\end{proof}

The next theorem is given without proof since the result follows
directly from Theorem 8.4.8 in Hörmander \cite{Ho1} together with
the observation that $\WF_{\FB}(f)\subseteq \WF(f)$.
\begin{thm}\label{8.4.8}
Let $X\subseteq \rr d$ be open, $\Gamma$ an open convex cone in $\rr d$ and let
$$
Z=\{z\in  \mathbf C^d;\, \re\, z\in X,\, \im\, z\in \Gamma,\, |\im \, z|<\gamma\},
$$
for some $\gamma>0$. Also let $F$ be an analytic function in $Z$ such that
$$
|F(z)|\leq C|\im \, z|^{-N},\, z\in Z.
$$
Then $F(\cdot +iy)$ has the limit $F_0\in \mathscr D'^{N+1}(X)$ as $y\in \Gamma$ tends to zero and $\WF_{\FB}(F_0)\subset X\times (\Gamma^\circ \back 0)$, where $\Gamma^\circ$ is the dual cone of $\Gamma$. Furthermore $F=0$ if $F_0=0$. 
\end{thm}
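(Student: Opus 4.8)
The plan is to deduce all three assertions by reduction, so that no genuinely new work inside the tube $Z$ is needed: the construction of the boundary value $F_0$, the uniqueness statement, and the estimate $\WF(F_0)\subset X\times(\Gamma^\circ\back 0)$ for the \emph{classical} wave-front set are precisely Theorem 8.4.8 in \cite{Ho1}, while the passage from $\WF$ to $\WF_{\FB}$ rests on the general inclusion $\WF_{\FB}(g)\subseteq\WF(g)$. I would therefore isolate that inclusion as the one step genuinely requiring an argument here.

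For the existence and uniqueness parts I would only recall the mechanism of H\"ormander's proof rather than reprove it. The bound $|F(z)|\le C|\im z|^{-N}$ lets one realise $F$ as an $(N+1)$-st derivative, taken in a complex direction normal to the boundary, of a function extending continuously up to $\im z=0$; hence the traces $F(\cdo+iy)$ converge, as $y\to 0$ within $\Gamma$, to a distribution $F_0\in\mathscr D'^{N+1}(X)$. The implication $F_0=0\Rightarrow F=0$ is the analytic-continuation half of the same theorem, a holomorphic function on the connected tube with vanishing boundary trace being identically zero. Both I would simply cite from \cite{Ho1}.

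The substantive step is the inclusion $\WF_{\FB}(g)\subseteq\WF(g)$, which I would apply with $g=F_0$. Take $(x_0,\xi_0)\notin\WF(g)$. Then there are $\fy\in C_0^\infty$ with $\fy\equiv1$ near $x_0$ and an open conical neighbourhood $\Gamma_{\xi_0}$ of $\xi_0$ on which $\widehat{\fy g}$ is rapidly decreasing, that is $\sup_{\xi\in\Gamma_{\xi_0}}\eabs{\xi}^{M}|\widehat{\fy g}(\xi)|<\infty$ for every $M$. I would then dominate $\widehat{\fy g}\,\chi_{\Gamma_{\xi_0}}$ pointwise by a function $h\in\mathscr S(\rr d)$, built by smoothing a non-increasing rapidly decreasing radial majorant of $|\widehat{\fy g}|$ over the cone. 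Since $\mathscr S(\rr d)\subseteq\mathscr B$ by condition (1) and $\mathscr B$ is solid by condition (3) of Definition \ref{BFspaces}, it follows that $\widehat{\fy g}\,\chi_{\Gamma_{\xi_0}}\in\mathscr B$, i.{\,}e.\ $\nm{\mathscr F(\fy g)\chi_{\Gamma_{\xi_0}}}{\mathscr B}<\infty$, so $(x_0,\xi_0)\notin\WF_{\FB}(g)$. Combined with H\"ormander's classical bound this yields $\WF_{\FB}(F_0)\subseteq\WF(F_0)\subset X\times(\Gamma^\circ\back 0)$.

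The main obstacle is exactly the step that looks routine: that a rapidly decreasing measurable function belongs to $\mathscr B$. Nothing in Definition \ref{BFspaces} grants membership in $\mathscr B$ from rapid decay alone, and one cannot appeal to $\mathscr S\subseteq\mathscr B$ directly because $\widehat{\fy g}\,\chi_{\Gamma_{\xi_0}}$ is not smooth; so the construction of the Schwartz majorant followed by an appeal to solidity is essential. Once this domination lemma is secured, the weighted version is immediate, since replacing $\widehat{\fy g}$ by $\widehat{\fy g}\,\omega$ with $\omega\in\mathscr P(\rr d)$ only inserts finitely many powers of $\eabs\xi$, which the rapid decay absorbs, and the same argument then gives the analogous statement for $\WF_{\FB(\omega)}(F_0)$.
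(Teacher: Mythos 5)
Your reduction is exactly the paper's: the paper states this theorem without proof, remarking that it ``follows directly from Theorem 8.4.8 in \cite{Ho1} together with the observation that $\WF_{\FB}(f)\subseteq \WF(f)$,'' which is precisely your decomposition into H\"ormander's classical result plus the inclusion of wave-front sets. Your extra step of actually proving that inclusion (pointwise domination of $\mathscr F(\fy g)\chi_{\Gamma_{\xi_0}}$ by a Schwartz majorant, then solidity and $\mathscr S\subseteq \mathscr B$) is a correct filling-in of the observation the paper imports from \cite{CJT1}, so the two arguments coincide in substance.
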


\par

Next we associate the temperate distribution $f$ with a function $F$ analytic in the convex cone $\Omega=\{z\in \mathbf C^d; \, |\im z|<1\}$ such that
$$
f=\int_{|\xi|=1} F(\cdot +i\xi)\, d\xi.
$$

We recall the following result from Hörmander \cite[Theorem 8.4.11]{Ho1}.

\begin{thm}\label{8.4.11}
Let $K$ be given by \eqref{IochK}. If $f\in \mathscr S'(\rr d)$
and $F=K*f$, then $F$ is analytic in $\Omega=\{z; \, |\im\, z|
<1\}$ and for some $C,a,b$
\begin{equation}\label{rel1fF}
|F(z)|\leq C(1+|z|)^a(1-|\im\, z|)^{-b}, \, z\in \Omega.
\end{equation}
The boundary values $F(\cdot + i \xi)$ are continuous functions of $\xi\in S^{d-1}$ with values in $\mathscr S'(\rr d)$, and
\begin{equation}\label{rel2fF}
\scal{f}{\phi}=\int\scal{F(\cdot+i\xi)}{\phi}\, d\xi,\qquad \phi\in \mathscr S.
\end{equation}
Conversely, if $F$ satisfies \eqref{rel1fF}, then \eqref{rel2fF} defines a distribution $f\in \mathscr S'$ with $F=K*f$.
\end{thm}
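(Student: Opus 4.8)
The plan is to realize $F=K*f$ as the distributional pairing $F(z)=\scal{f}{K(z-\cdot)}$ and to read off every assertion from the pointwise and derivative estimates on $K$ recalled before the theorem, the reproducing identity furnished by \eqref{IochK}, and Theorem \ref{8.4.8}. First I would establish that $F$ is well defined and analytic. For $z\in\Omega$ the map $y\mapsto K(z-y)$ belongs to $\mathscr S(\rr d)$: its imaginary part $\im z$ has modulus $<1$, so the bounds $|D^\beta K(x+iy)|\le C_\beta(1-|y|)^{-n-|\beta|}e^{-c|x|}$ give rapid decay in $y$ of $K(z-\cdot)$ together with all its derivatives. Hence $F(z)=\scal{f}{K(z-\cdot)}$ is defined for every $z\in\Omega$. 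To see that $F$ is holomorphic I would verify the Cauchy--Riemann equations by differentiating under the pairing: the difference quotients of $z\mapsto K(z-\cdot)$ converge in $\mathscr S(\rr d)$, again by the derivative bounds, and since $K$ is holomorphic this gives $\bar\partial_{z_j}F(z)=\scal{f}{\bar\partial_{z_j}K(z-\cdot)}=0$.

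Next I would prove the growth estimate \eqref{rel1fF}. As $f\in\mathscr S'(\rr d)$ is of finite order there are $C,N$ with $|\scal{f}{\fy}|\le C\sum_{|\alpha|,|\beta|\le N}\sup_x|x^\alpha\partial^\beta\fy(x)|$. Taking $\fy=K(z-\cdot)$ with $z=u+iy$ and inserting the derivative bounds yields
\begin{equation*}
\sup_x|x^\alpha D^\beta K(u-x+iy)|\le C(1-|y|)^{-n-N}\sup_x|x|^{|\alpha|}e^{-c|u-x|},
\end{equation*}
and the substitution $x=u-w$ estimates the last supremum by $C(1+|u|)^N\le C(1+|z|)^N$. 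This gives \eqref{rel1fF} with $a=N$ and $b=n+N$.

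The core of the argument is the reproducing identity. Using \eqref{IochK} and $\int_{|\omega|=1}e^{-\scal\omega\xi}\,d\omega=I(\xi)$, I would compute, for $z\in\Omega$,
\begin{multline*}
\int_{|\omega|=1}K(z+i\omega)\,d\omega
=(2\pi)^{-d}\int\Big(\int_{|\omega|=1}e^{i\scal{z+i\omega}\xi}\,d\omega\Big)I(\xi)^{-1}\,d\xi\\
=(2\pi)^{-d}\int e^{i\scal z\xi}\,d\xi,
\end{multline*}
whose boundary value as $\im z\to0$ is $\delta$; thus $\int_{|\omega|=1}K(\cdot+i\omega)\,d\omega=\delta$. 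The boundary values $F(\cdot+i\xi)$ for $\xi\in S^{d-1}$ exist in $\mathscr S'(\rr d)$ and depend continuously on $\xi$: near a direction $\xi_0$ the substitution $z\mapsto z-i\xi_0$ turns $\Omega$ into a wedge $\{\im z\in\Gamma,\ |\im z|<\gamma\}$ of the type in Theorem \ref{8.4.8}, whose growth hypothesis is exactly \eqref{rel1fF}, and the theorem supplies the boundary value together with its uniformity in $\xi_0$. Convolving the reproducing identity with $f$ and interchanging the sphere integral with the pairing (justified by this continuity) gives $\int_{|\xi|=1}F(\cdot+i\xi)\,d\xi=\delta*f=f$, i.e.\ \eqref{rel2fF} and \eqref{fochF}.

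For the converse, given $F$ holomorphic in $\Omega$ with \eqref{rel1fF}, the same application of Theorem \ref{8.4.8} produces boundary values $F(\cdot+i\xi)\in\mathscr S'(\rr d)$ continuous in $\xi$, so \eqref{rel2fF} defines $f\in\mathscr S'(\rr d)$. Since $K(x-\cdot)\in\mathscr S(\rr d)$ for real $x$ (the real axis lies in a cone where $K=O(e^{-c|z|})$), I would compute
\begin{multline*}
(K*f)(x)=\int_{|\xi|=1}\scal{F(\cdot+i\xi)}{K(x-\cdot)}\,d\xi
=\int_{|\xi|=1}\int F(y)K(x-y+i\xi)\,dy\,d\xi\\
=\int F(y)\Big(\int_{|\xi|=1}K(x-y+i\xi)\,d\xi\Big)\,dy
=\int F(y)\,\delta(x-y)\,dy=F(x),
\end{multline*}
using the reproducing identity once more, and then extend $K*f=F$ to all of $\Omega$ by analytic continuation. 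The main obstacle is the second equality together with the interchange in the third: moving the factor $i\xi$ from $F(\cdot+i\xi)$ onto $K$ amounts to shifting the $y$-contour by $i\xi$, and carrying out this deformation and the ensuing Fubini step requires controlling $F$ via \eqref{rel1fF} against the singularity of $K(x-y+i\xi)$ at $y=x$ (where $|\im|=1$), which becomes integrable only after the sphere average. This is where the sharp decay $K(z)=O(e^{-c|z|})$ in cones and the boundary blow-up rate $K(iy)\sim(1-|y|)^{-d}$ from the Lemma are indispensable, rather than in any of the formal manipulations.
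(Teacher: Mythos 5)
First, a framing remark: the paper does not prove this statement at all --- it is recalled verbatim from H\"ormander \cite[Theorem 8.4.11]{Ho1} --- so your proposal must be measured against H\"ormander's argument, which runs entirely on the Fourier-transform side. Your direct part (well-definedness of $F(z)=\scal{f}{K(z-\cdot)}$, analyticity, and the bound \eqref{rel1fF} from the seminorm estimate on $f$) is sound. The genuine gap is in the converse, and it is not merely the integrability issue you flag: the route fails structurally. The identity $\int_{|\xi|=1}K(\cdot+i\xi)\,d\xi=\delta$ holds only as a limit in $\mathscr S'$ of $\int_{|\xi|=1}K(\cdot+it\xi)\,d\xi$ as $t\to1^-$; pointwise one has $\int_{|\xi|=1}K(w+i\xi)\,d\xi=0$ for every real $w\neq0$ (in $d=1$, where $K(z)=\bigl(4\cosh(\pi z/2)\bigr)^{-1}$, one checks directly that $K(x+i)+K(x-i)\equiv 0$ for $x\neq0$). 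Consequently, if your chain of equalities is read literally, the inner $\xi$-integral vanishes for a.e.\ $y$ and you obtain $(K*f)(x)=\int F(y)\cdot 0\,dy=0$, not $F(x)$: performing the sphere integral pointwise under the $y$-integral annihilates exactly the delta mass you need. Moreover the intermediate expressions $\int F(y)K(x-y+i\xi)\,dy$ for fixed $\xi$ are not absolutely convergent, since $K(\cdot+i\xi)$ has a non-integrable singularity at the origin, so no Fubini theorem applies; the true boundary pairing $\scal{F(\cdot+i\xi)}{K(x-\cdot)}$ differs from such a principal-value integral by Plemelj-type delta contributions, and these are precisely the terms your second and third equalities discard. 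No amount of ``sharp decay of $K$ in cones'' repairs this.

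The fix --- and H\"ormander's actual proof --- avoids the kernel entirely. For $|y|<1$ the Cauchy--Riemann equations give $\partial_{y_j}\bigl(e^{\scal{y}{\xi}}\,\widehat{F(\cdot+iy)}(\xi)\bigr)=0$, so $\widehat{F(\cdot+iy)}=e^{-\scal{y}{\cdot}}\,g$ with $g$ independent of $y$; taking Fourier transforms in \eqref{rel2fF} and using $\int_{|\xi|=1}e^{-\scal{\xi}{\cdot}}\,d\xi=I$ yields $I\,g=\widehat f$, i.e.\ $g=\widehat f/I$, which is exactly the statement $F=K*f$. The same Fourier-side view also makes your direct part rigorous: the transform of $K(\cdot+iy)$ is $(2\pi)^{-d/2}e^{-\scal{y}{\cdot}}/I$, bounded by $C\eabs{\cdot}^{(d-1)/2}$ uniformly in $|y|\le 1$, which gives the $\mathscr S'$-valued continuity of the boundary values and \eqref{rel2fF} at once, replacing your formal manipulation with the divergent integral $(2\pi)^{-d}\int e^{i\scal{z}{\xi}}\,d\xi$. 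Two smaller points: Theorem \ref{8.4.8} as quoted only produces local limits in $\mathscr D'$, so to get boundary values in $\mathscr S'$ depending continuously on $\xi$ you need the tempered version (Theorem 3.1.15 in \cite{Ho1}) or, again, the Fourier-side bound; and your application of Theorem \ref{8.4.8} after the substitution $z\mapsto z-i\xi_0$ requires restricting $\re z$ to a bounded set so that the factor $(1+|z|)^a$ in \eqref{rel1fF} can be absorbed into the constant.
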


\par

Next we give the main theorem.

\begin{thm}\label{huvudsats}
Assume that $f$ and $F$ satisfies the conditions in Theorem \ref{8.4.11}. Then we have that
$$
(\rr d\times S^{d-1})\cap \WF_{\FB}(f)=\{(x,\xi); \, |\xi|=1,\, F \text{ is not in} \, \FB \text{ at } x-i\xi\}.
$$
\end{thm}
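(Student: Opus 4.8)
The plan is to establish the set equality in complementary form: for a fixed $\xi_0$ with $|\xi_0|=1$ I will show that $(x_0,\xi_0)\notin\WF_{\FB}(f)$ if and only if $F$ is in $\FB$ at $x_0-i\xi_0$. The engine is the explicit Fourier multiplier relating $F$ to $f$. Since $F=K*f$ and, by \eqref{IochK}, $K(x+iy)=(2\pi)^{-d}\int e^{i\scal{x}{\eta}}e^{-\scal{y}{\eta}}/I(\eta)\,d\eta$, the boundary value $F(\cdot+iy)$ (a tempered distribution depending continuously on $y$ by Theorem \ref{8.4.11}) satisfies
\[
\mathscr F\big(F(\cdot+iy)\big)(\eta)=\frac{e^{-\scal{y}{\eta}}}{I(\eta)}\,\widehat f(\eta).
\]
Specializing to $y=-\xi_0$ and writing $m(\eta)=e^{\scal{\xi_0}{\eta}}/I(\eta)$, the asymptotics of $I$ recorded in Section \ref{sec1}, namely $I(\eta)=I_0(\scal{\eta}{\eta}^{1/2})$ with $I_0(\rho)\sim(2\pi)^{(d-1)/2}e^{\rho}\rho^{-(d-1)/2}$, show that $m$ is a smooth microlocal weight concentrating in the direction $\xi_0$: one has $m(\eta)\sim c\,\eabs{\eta}^{(d-1)/2}$ along the ray $\{t\xi_0;\,t>0\}$, while $m(\eta)=O(e^{-c|\eta|})$ outside any cone around $\xi_0$. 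Testing $F$ at $x_0-i\xi_0$ therefore probes $f$ microlocally at $(x_0,\xi_0)$.

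For the forward implication, assume $(x_0,\xi_0)\notin\WF_{\FB}(f)$, so there are an open cone $\Gamma\ni\xi_0$ and $\fy\in C_0^\infty$ with $\fy\equiv1$ near $x_0$ such that $\mathscr F(\fy f)\chi_\Gamma\in\mathscr B$. First I would replace $\fy\cdot\big(K(\cdot-i\xi_0)*f\big)$ by $K(\cdot-i\xi_0)*(\fy f)$; the difference is smoothing and lies in $\FB$ by the Cauchy estimates $|D^\beta K(x+iy)|\le C_\beta(1-|y|)^{-n-|\beta|}e^{-c|x|}$ together with the convolution bound \eqref{propupps}. It then remains to show that $m\cdot\widehat{\fy f}\in\mathscr B$. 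Splitting $\widehat{\fy f}=\widehat{\fy f}\chi_{\Gamma}+\widehat{\fy f}\chi_{\rr d\back\Gamma}$, the contribution from $\rr d\back\Gamma$ is controlled by the exponential decay of $m$ against the polynomial growth of $\widehat{\fy f}$, using property (3) of Definition \ref{BFspaces}; the remaining term is $m\,\widehat{\fy f}\chi_\Gamma$, which is where the hypothesis $\widehat{\fy f}\chi_\Gamma\in\mathscr B$ must be combined with the size of $m$ on $\Gamma$.

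For the converse, assume $\fy F(\cdot-i\xi_0)\in\FB$, i.e. $m\,\widehat{\fy f}\in\mathscr B$ modulo the same smoothing remainder. Here I would use the integral representation in the form $f=\int_{|\theta|=1}F(\cdot-i\theta)\,d\theta$ and split the sphere into a small cap $U$ around $\xi_0$ and its complement. For $\theta\notin U$ the multiplier $m_{-\theta}(\eta)=e^{\scal{\theta}{\eta}}/I(\eta)$ decays exponentially for $\eta$ in a sufficiently narrow cone $\Gamma'\ni\xi_0$, so that part contributes nothing to $\WF_{\FB}$ at $(x_0,\xi_0)$, in the spirit of Theorem \ref{8.4.8}. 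For $\theta\in U$ and $\eta\in\Gamma'$ the estimate $m_{-\theta}(\eta)=e^{\scal{\theta-\xi_0}{\eta}}m(\eta)\le C\,m(\eta)$ holds, since $\scal{\theta-\xi_0}{\eta}\le 0$ up to a bounded error once $U$ and $\Gamma'$ are coordinated; at the level of these multipliers (the $x$-localization being handled as above) this gives $\big|m_{-\theta}\,\widehat{\fy f}\,\chi_{\Gamma'}\big|\le C\big|m\,\widehat{\fy f}\,\chi_{\Gamma'}\big|$, and integrating over the finite-measure cap $U$ and invoking property (3) and \eqref{propupps} yields $\mathscr F(\fy f)\chi_{\Gamma'}\in\mathscr B$, that is $(x_0,\xi_0)\notin\WF_{\FB}(f)$.

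The step I expect to be the main obstacle is the bookkeeping of the polynomial factor $\eabs{\eta}^{(d-1)/2}$ produced by the asymptotics of $I$, since this is exactly where the BF-space setting departs from the smoothness and analytic classes of H\"ormander, for which such factors are irrelevant. Concretely, passing $\mathscr B$-membership across $m$ on the cone $\Gamma$ transfers it only up to this weight, so making the forward inclusion precise requires either that $\mathscr B$ absorb $\eabs{\eta}^{(d-1)/2}$ in the relevant sense or that the statement be read through the moderate-weight construction of Remark \ref{newbfspaces} and the weighted results of the later sections. The secondary difficulty is verifying that multiplication by $\fy$ and convolution by $K(\cdot-i\xi_0)$ commute modulo a remainder that genuinely lands in $\mathscr B$; I expect to lean throughout on \eqref{propupps} and on the decay estimates for $K$ and its derivatives, and to coordinate the apertures of $U$ and $\Gamma'$ carefully so that the comparison $m_{-\theta}\le C\,m$ remains valid.
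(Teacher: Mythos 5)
Your overall architecture (prove both inclusions; localize $f$; use $1/|I(\xi)|\le Ce^{-|\xi|}(1+|\xi|)^{(d-1)/2}$; split the Fourier side into the cone $\Gamma$ and its complement) matches the paper's, but neither half of your argument closes. In the forward direction you test $\FB$-membership at the exact boundary trace $y=-\xi_0$, so your multiplier $m(\eta)=e^{\scal{\xi_0}{\eta}}/I(\eta)$ grows like $\eabs{\eta}^{(d-1)/2}$ along the axis of $\Gamma$, and, as you yourself flag, $\widehat{\fy f}\chi_\Gamma\in\mathscr B$ does not imply $m\,\widehat{\fy f}\chi_\Gamma\in\mathscr B$; you leave this as an unresolved ``main obstacle'' and suggest weighted hypotheses on $\mathscr B$. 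The paper never meets this obstacle because it never sets $|y|=1$: being in $\FB$ at $x_0-i\xi_0$ is verified on $F(\cdot+iy)$ for $|y|<1$ near $-\xi_0$, where the cone-part multiplier satisfies $e^{-\scal{y}{\xi}-|\xi|}(1+|\xi|)^{(d-1)/2}\le e^{(|y|-1)|\xi|}(1+|\xi|)^{(d-1)/2}$, bounded in $\xi$ for each fixed $|y|<1$, so property (3) of Definition \ref{BFspaces} finishes the cone term, while the off-cone term is controlled uniformly for $|y+\xi_0|<\ep$ via $\scal y\xi+|\xi|\ge\ep|\xi|$ and the polynomial bound on $\widehat{\fy f}$. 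No absorption property of $\mathscr B$ is needed; the cure is to stay inside the tube, which your formulation forecloses.

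The converse contains a step that is false rather than merely unfinished. You claim $m_{-\theta}(\eta)=e^{\scal{\theta-\xi_0}{\eta}}m(\eta)\le C\,m(\eta)$ for $\theta\in U$, $\eta\in\Gamma'$, ``once $U$ and $\Gamma'$ are coordinated.'' Whatever the apertures, the cap $U$ and the cone $\Gamma'$ both contain a neighbourhood of $\xi_0$ (and for the complement part to decay on $\Gamma'$ you in fact need $\Gamma'$ strictly narrower than $U$), so one may pick $\theta\in U$ with $\theta\ne\xi_0$ lying in $\Gamma'$ and take $\eta=t\theta$: then $\scal{\theta-\xi_0}{\eta}=t(1-\scal{\xi_0}{\theta})=t|\theta-\xi_0|^2/2\to\infty$, i.e.\ the ``error'' grows linearly in $|\eta|$ and is never bounded. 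The paper's Lemma \ref{andra riktningen} replaces this pointwise multiplier domination by a norm-level argument: write the surface measure as $\sum d\mu_j$ with $\supp d\mu_j$ in narrow cones $V_j$; for the pieces with $-\xi_0\notin\overline{V_j}$, the analyticity statement in the first half of the lemma together with Theorem \ref{8.4.8} confines $\WF_{\FB}(F_{\mu_j})$ to directions $\zeta$ with $-\zeta/|\zeta|\in\overline{V_j}$, hence away from $\xi_0$; for the piece supported near $-\xi_0$, the hypothesis --- which by the neighbourhood definition of ``at'' yields $\fy F(\cdot+i\xi)\in\FB$ for \emph{all} $\xi$ near $-\xi_0$, not only at $\xi=-\xi_0$ as in your setup --- gives via Minkowski's inequality $\nm{\fy F_{\mu_1}}{\FB}\le\int\nm{\fy F(\cdot+i\xi)}{\FB}\,d\mu_1(\xi)<\infty$, so that piece is locally in $\FB$ and contributes nothing to the wave-front set over $x_0$. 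That integration of localized norms over the cap, combined with the cone decomposition of the measure, is the missing idea your proposal would need.
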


We remark that $F$ is in $\FB$ at $x-i\xi$ if for some neighbourhood $V$ of $(x,\xi)$ there exists some localization $\fy \in C^{\infty}_0$ with $\fy=1$ in $V$ such that $\fy f\in\FB$. Before the proof we note that $C^L$ is a subset of $\FB$.

\par

\begin{proof}
First assume that $(x_0,\xi_0)\not\in \WF_{\FB}(f)$ and $|\xi_0|=1$.
Then we want to show that $F=K*f\in \FB$ at $x_0-i\xi_0$. By the
hypothesis there exist $r>0$ and $\fy_{x_0}\in C^{\infty}_0$ such
that $\fy_{x_0}(x)= 1$ if $|x-x_0|<r$ and an open conical
neighbourhood $\Gamma_{\xi_0}$ of $\xi_0$ such that $$
\nm{\mathscr F(\fy_{x_0} f)\chi_{\Gamma_{\xi_0}}}{\mathscr B}
<\infty. $$ We also recall that since $\fy_{x_0}f$ has compact supports it
holds $$ |\mathscr F(\fy_{x_0} f)(\xi)|\leq C(1+
|\xi|)^M,\qquad \xi\in \rr d, $$ for some fixed constants $C,M\geq 0$. Set $f=\fy_{x_0} f + v$ where $v=f(1-\fy_{x_0})$. Then
$F=K*f=K*(\fy_{x_0} f) + K*v$ and
\begin{equation*}
K*v(z)=\scal {K(z-\cdot)}{v}.
\end{equation*}
Now $K(x+iy-t)$ is well-defined when $|y|^2 <1 + |x-t|^2$, so it is well-defined and rapidly decreasing with all derivatives when $|t-x_0|\geq r$ if
\begin{equation}\label{set1}
|y|^2 <1+(r-|x-x_0|)^2, \qquad |x-x_0|<r.
\end{equation}
(Cf. Lemma 8.4.10 and Theorem 8.4.11 in \cite{Ho1}.)
It follows that $K*v$ is analytic and bounded in compact subsets of the set defined by \eqref{set1}, which is a neighbourhood of $x_0-i\xi_0$. Then it follows that $K*v$ belongs to $\FB$ at $x_0-i\xi_0$.

\par

Next we consider $K*(\fy_{x_0} f)$. It is left to prove that $K*(\fy_{x_0}f)$ belongs to $\FB$ at $x_0- i\xi_0$.
The Fourier transform of $K*(\fy_{x_0} f)(\cdot+iy)$ is $e^{-\scal y \xi} \mathscr F(\fy_{x_0} f)/I(\xi)$. By (8.4.12) in Hörmander \cite{Ho1} it follows that
$$
\frac{1}{|I(\xi)|} \leq C e^{-|\xi|} (1+|\xi|)^{(d-1)/2}.
$$
Using this we conclude that
\begin{multline}\label{uppdeln}
\nm{K*(\fy_{x_0} f)}{\FB} =\nm{e^{-\scal y \cdot} \mathscr F(\fy_{x_0} f)/I(\cdot)}{\mathscr B} \\[1 ex]\leq C_1 \nm{e^{-\scal y \cdot -|\cdot|} (1+|\cdot|)^{(d-1)/2}\mathscr F(\fy_{x_0} f)}{\mathscr B} \\[1 ex]
\leq C_2(\nm{e^{-\scal y \cdot -|\cdot|} (1+|\cdot|)^{(d-1)/2}\mathscr F(\fy_{x_0} f)\chi_{\Gamma_{\xi_0}}}{\mathscr B}+\\[1 ex]\nm{e^{-\scal y \cdot -|\cdot|} (1+|\cdot|)^{(d-1)/2}\mathscr F(\fy_{x_0} f)(1-\chi_{\Gamma_{\xi_0}})}{\mathscr B})
\end{multline}
For the first part in the right-hand side of \eqref{uppdeln} we recognize that for every $y$ such that $|y|<1$
$ \sup_\xi e^{-\scal y \xi -|\xi|} (1+|\xi|)^{(d-1)/2} <\infty$ and therefore
\begin{equation*}
\nm{e^{-\scal y \cdot -|\cdot|} (1+|\cdot|)^{(d-1)/2}\mathscr F(\fy_{x_0} f)\chi_{\Gamma_{\xi_0}}}{\mathscr B}<\infty.
\end{equation*}
Then for the second part we have that
\begin{multline*}
\nm{e^{-\scal y \cdot -|\cdot|} (1+|\cdot|)^{(d-1)/2}\mathscr F(\fy_{x_0} f)(1-\chi_{\Gamma_{\xi_0}})}{\mathscr B}\\[1 ex]\leq
C\nm{e^{-\scal y \cdot -|\cdot|} (1+|\cdot|)^{M+(d-1)/2}(1-\chi_{\Gamma_{\xi_0}})}{\mathscr B}.
\end{multline*}
Choose $\varepsilon >0$ such that $\scal {\xi_0} \xi <(1-2\varepsilon)|\xi|$ when $\xi\notin\Gamma_{\xi_0}$. Then
$$
\scal y \xi +|\xi|>\varepsilon |\xi|
$$
if $\xi \notin \Gamma_{\xi_0}$ and $|y+\xi_0|<\varepsilon$.
Hence we obtain
\begin{multline*}
\nm{e^{-\scal y \cdot -|\cdot|} (1+|\cdot|)^{M+(d-1)/2}(1-\chi_{\Gamma_{\xi_0}})}{\mathscr B} \\[1 ex]\leq C \nm{e^{-\varepsilon |\cdot|} (1+|\cdot|)^{M+(d-1)/2}}{\mathscr B} <\infty
\end{multline*}
This completes the first part of the proof.
\end{proof}

For the second part of the proof we need the following lemma.

\begin{lemma}\label{andra riktningen}
Let $d \mu$ be a measure on $S^{d-1}$ and $\Gamma$ an open convex cone such that
\begin{equation*}
\scal y \xi <0\, \text{ when } \, 0\neq y\in \overline{\Gamma},\, \xi\in \supp d\mu.
\end{equation*}
If $F$ is  analytic in $\Omega$ and satisfies \eqref{rel1fF}, then
$$ F_1(z)=\int F(z+i\xi) \, d\mu(\xi) $$ is analytic and
$|F_1(z)|\leq C'(1+|\re z|)^a|\im z|^{-b}$ when $\im z\in \Gamma$
and $|\im z|$ is small enough.

\par

For every measure $d\mu$ on $S^{d-1}$ we have
\begin{equation}\label{omvand}
\WF_{\FB}(F_\mu)\subset \{(x,\zeta);\, -\zeta/|\zeta| \in \supp d\mu \text{ and } F\not\in \FB \, \text{at}\, x-i\zeta/|\zeta|\}
\end{equation}
Here $F_\mu=\int F(\cdot + i\xi)\, d\mu(\xi)$.
\end{lemma}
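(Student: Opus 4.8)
The plan is to prove the two assertions in turn: first the analytic estimate, then, using it together with Theorem~\ref{8.4.8}, the localization of $\WF_{\FB}(F_\mu)$.

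For the estimate I would start from a compactness argument. Since $\supp d\mu\subset S^{d-1}$ is compact and $\scal y\xi<0$ for all $0\neq y\in\overline{\Gamma}$ and $\xi\in\supp d\mu$, there is $c>0$ with $\scal y\xi\le -c|y|$ for $y\in\Gamma$, $\xi\in\supp d\mu$. Writing $y=\im z$ one has $\re(z+i\xi)=\re z$ and $1-|\im(z+i\xi)|^2=-|y|^2-2\scal y\xi\ge |y|(2c-|y|)$, so when $|y|<c$ the point $z+i\xi$ lies in $\Omega$ and $1-|\im(z+i\xi)|\ge c|y|/2$. Inserting this together with $|z+i\xi|\le |\re z|+|y|+1$ into \eqref{rel1fF} gives $|F(z+i\xi)|\le C(1+|\re z|)^a(c|y|/2)^{-b}$ uniformly in $\xi\in\supp d\mu$; integrating against the finite measure $d\mu$ yields the claimed bound for $F_1$, and analyticity follows by differentiating under the integral sign, the integrand being analytic in $z$ and locally uniformly bounded in $\xi$.

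For \eqref{omvand} it suffices to fix $(x_0,\zeta_0)$ with $|\zeta_0|=1$ not lying in the right-hand side and to show $(x_0,\zeta_0)\notin\WF_{\FB}(F_\mu)$. I would split $d\mu=\chi\,d\mu+(1-\chi)\,d\mu=:d\mu'+d\mu''$, where $\chi\in C^\infty(S^{d-1})$ equals $1$ near $-\zeta_0$ and is supported in a small cap $V$ about $-\zeta_0$; then $F_\mu=F_{\mu'}+F_{\mu''}$ and, by subadditivity of $\WF_{\FB}$ (cf. \cite{CJT1}), it is enough to treat the two terms separately. The term $F_{\mu'}$ is the easy one: if $-\zeta_0\notin\supp d\mu$ I shrink $V$ so that $d\mu'=0$; otherwise $F\in\FB$ at $x_0-i\zeta_0$, and shrinking $V$ the definition supplies a single localization $\fy\in C^\infty_0$, $\fy=1$ near $x_0$, with $\sup_{\xi\in V}\nm{\fy\,F(\cdot+i\xi)}{\FB}<\infty$, whence $\fy F_{\mu'}=\int\fy\,F(\cdot+i\xi)\,d\mu'(\xi)\in\FB$ by the integral (Minkowski-type) bound recorded in Section~\ref{sec1}; thus $(x_0,\zeta_0)\notin\WF_{\FB}(F_{\mu'})$.

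The heart of the matter is $F_{\mu''}$, whose support avoids $-\zeta_0$. I would cover $\supp d\mu''$ by finitely many small caps centred at points $\xi_j\neq-\zeta_0$, take a subordinate partition of unity, and write $F_{\mu''}=\sum_j F_{\mu_j}$. For each $j$ I pick an open convex cone $\Gamma_j$ with $\overline{\Gamma_j}\setminus 0\subset\{y;\,\scal y\xi<0 \text{ for all }\xi\in\supp d\mu_j\}$ and so close to the half-space $\{\scal y{\xi_j}<0\}$ that its dual cone $\Gamma_j^\circ$ is a thin cone about $-\xi_j$ (essentially $\overline{\operatorname{cone}}(\{-\xi;\,\xi\in\supp d\mu_j\})$); since $-\xi_j$ stays away from $\zeta_0$, small caps give $\zeta_0\notin\Gamma_j^\circ$. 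The estimate just proved applies to $d\mu_j$ and $\Gamma_j$, so on any bounded $X\ni x_0$ the function $F_{1,j}(z)=\int F(z+i\xi)\,d\mu_j(\xi)$ is analytic with $|F_{1,j}(z)|\le C_X|\im z|^{-b}$, and its boundary value as $\im z\to 0$ in $\Gamma_j$ equals $F_{\mu_j}$. Theorem~\ref{8.4.8} then yields $\WF_{\FB}(F_{\mu_j})\subset X\times(\Gamma_j^\circ\setminus 0)$, so $(x_0,\zeta_0)\notin\WF_{\FB}(F_{\mu_j})$ for each $j$, hence $(x_0,\zeta_0)\notin\WF_{\FB}(F_{\mu''})$, which completes the argument. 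I expect the main obstacle to lie exactly here: keeping the dual-cone bookkeeping correct (that $\Gamma_j^\circ$ is genuinely a thin cone about $-\xi_j$ missing $\zeta_0$) and, above all, justifying that the distributional boundary value of $F_{1,j}$ coincides with $\int F(\cdot+i\xi)\,d\mu_j(\xi)$, which needs the limit $\im z\to 0$ to be passed inside the integral via the continuity of $\xi\mapsto F(\cdot+i\xi)$ from Theorem~\ref{8.4.11}.
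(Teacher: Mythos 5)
Your proposal is correct and follows essentially the same route as the paper's proof: the same narrow-cone decomposition of $d\mu$ combined with Theorem~\ref{8.4.8} to confine the wave-front directions to thin dual cones, and the same compactness-plus-Minkowski argument to handle points where $F\in\FB$; the only differences are organizational (you fix $(x_0,\zeta_0)$ and split $d\mu$ into a cap near $-\zeta_0$ plus the rest, while the paper first proves a $\singsupp_{\mathscr B}$ inclusion and then runs a case analysis over the covering), and that you actually prove the analytic estimate for $F_1$, which the paper simply cites from H\"ormander's Theorem 8.4.12. Both write-ups leave the same technical points at a comparable level of detail (the vector-valued Minkowski bound in $\FB$ and the identification of the distributional boundary value of $F_{1,j}$ with $F_{\mu_j}$), and your explicit flagging of the latter is, if anything, more careful than the paper.
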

\begin{proof} The first statement was proved by Hörmander in \cite[Theorem 8.4.12]{Ho1}. Let $\Gamma^\circ$ be the dual cone of $\Gamma$. By Theorem \ref{8.4.8} it follows that
$$
\WF_{\FB}(F_\mu)\subset \rr d\times \Gamma^\circ.
$$

\par

Assume that $$ x_0\in \{x;\, F\in \FB \,\text{at}\, x+i\xi\,
\text{for every}\, \xi\in \supp d\mu\}. $$ Then we have that for
every $\xi_0\in \supp d\mu$ there exists an open neighbourhood
$U_{\xi_0}$ of $(x_0,\xi_0)$ and a function $\fy_{\xi_0}\in
C^{\infty}_0$ with $\supp \fy_{\xi_0} \subseteq U_{\xi_0}$ such
that $x\mapsto \fy_{\xi_0} F \in \FB$. Since the set $$
\{x_0+i\xi;\,\xi\in\supp d\mu \} $$ is compact, it follows from
arguments about compactness that there exist finitely many points
$\xi_j$ such that $$ \{x_0+i\xi;\,\xi\in\supp d\mu \}\subseteq
\bigcup U_{\xi_j}. $$ For every $\xi_j$ we choose an open
neighbourhood $X_{\xi_j}$ of $x_0$ and let $X=\bigcap X_{\xi_j}$.
Then we can choose $\fy_0\in C^{\infty}_0$  equal to one in the neighbourhood
$X$ of $x_0$ such that $x\mapsto \fy_0  F\in \FB$.
Furthermore, we have that there exists $\fy \in C^\infty_0$, with support in a neighbourhood of $x_0$, such
that
\begin{multline*}
\nm{\fy F_\mu}{\FB}=\nm{\int \fy F(\cdot+i\xi)\, d \mu}{\FB} \\[1 ex]\leq \int\nm{\fy F(\cdot+i\xi)}{\FB}\, d\mu <\infty.
\end{multline*}
Then $\fy F_\mu \in \FB$ at $x_0$.

\par

From the arguments above it follows that
\begin{equation*}
\singsupp_{\mathscr B} (F_\mu) \subset \{x; \, F \, \text{is not in} \, \FB \, \text{at} \, x+i\xi \, \text{for some}\, \xi\in \supp \, d\mu\}.
\end{equation*}
Then we may write $d\mu=\sum d\mu_j$ where $\supp d\mu_j$ is contained in the intersection of $\supp d\mu$ and a narrow open convex cone $V_j$. Applying the result just proved with $d \mu$ replaced by $d \mu_j$ and $\Gamma$ replaced by the interior of the dual cone $-V_j^\circ$ we obtain
$$
\WF_{\FB}(F_\mu)\subset \bigcup \{(x,\zeta); \, -\zeta/|\zeta|\in \overline{V}_j,\, F\notin \FB \, \text{at} \, x+i\xi \, \text{for some} \, \xi\in V_j\}.
$$
If $-\zeta/|\zeta|\notin \supp d \mu$ or $F\in \FB$ at $x-i\zeta/|\zeta|$ we can choose the covering so that $-\zeta/|\zeta|\notin \overline{V}_j$ for every $j$ or for all $j\neq 1$ while $F\in \FB$ at $x+i\xi$ for every $\xi\in V_1$. In both cases it follows that $(x,\xi)\notin \WF_{\FB}(F_\mu)$  which proves \eqref{omvand}.
This completes the proofs of Lemma \ref{andra riktningen} and Theorem \ref{huvudsats}.
\end{proof}

\par

The following Corollary is an analogue to Corollary 8.4.13 in Hörmander \cite{Ho1}.

\begin{cor}\label{8.4.13'}
Let $\Gamma_1,\dots ,\Gamma_m$ be closed cones in $\rr d\back 0$
such that $$ \bigcup_{j=1}^m \Gamma_j = \rr d\back 0. $$ For every
$f\in \mathscr S'(\rr d)$ there exists a  decomposition
$f=\sum_{j=1}^m f_j$, where $f_j\in \mathscr S'$ and
\begin{equation}\label{1}
\WF_{\FB} (f_j) \subseteq \WF_{\FB} (f) \cap (\rr d \times \Gamma_j).
\end{equation}
If there exists another decomposition $f=\sum_{j=1}^m f_j'$ which
also satisfies the conditions above, then $f_j'=f_j + \sum_{k=1}^m
f_{jk}$ where $f_{jk}\in \mathscr S'$, $f_{jk}=-f_{kj}$ and
\begin{equation}\label{2}
\WF_{\FB} (f_{jk}) \subset WF_{\FB}(f) \cap (\rr d \times (\Gamma_j \cap \Gamma_k)).
\end{equation}
\end{cor}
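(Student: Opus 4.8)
The plan is to read both assertions off the integral representation $f=\int_{|\xi|=1}F(\cdot+i\xi)\,d\xi$ from Theorem~\ref{8.4.11}, slicing the sphere $S^{d-1}$ and controlling the resulting pieces by Lemma~\ref{andra riktningen} together with the characterization of $\WF_{\FB}(f)$ in Theorem~\ref{huvudsats}. Throughout I use that $\WF_{\FB}$ is conic in the second variable and subadditive, i.e.\ $\WF_{\FB}(g+h)\subseteq\WF_{\FB}(g)\cup\WF_{\FB}(h)$, exactly as for the classical wave-front set.

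For existence, I first note that since $\bigcup_j\Gamma_j=\rr d\back 0$ and this set is symmetric, the antipodal cones $-\Gamma_j$ also cover it, so $\{(-\Gamma_j)\cap S^{d-1}\}_j$ covers $S^{d-1}$. I then fix a measurable partition $S^{d-1}=\bigcup_j A_j$ into pairwise disjoint (up to null sets) sets with $\overline{A_j}\subseteq -\Gamma_j$, for instance $A_j=\bigl((-\Gamma_j)\setminus\bigcup_{k<j}(-\Gamma_k)\bigr)\cap S^{d-1}$, and put $d\mu_j=\chi_{A_j}\,d\xi$ and $f_j=\int F(\cdot+i\xi)\,d\mu_j(\xi)$. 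Since the $d\mu_j$ add up to surface measure we get $\sum_j f_j=f$, and each $f_j\in\mathscr S'$ because by Theorem~\ref{8.4.11} the boundary values are continuous in $\xi$ with values in $\mathscr S'$. Applying \eqref{omvand} to $d\mu_j$, every wave-front direction $\zeta$ of $f_j$ satisfies $-\zeta/|\zeta|\in\overline{A_j}\subseteq -\Gamma_j$, hence $\zeta\in\Gamma_j$, and also $F\notin\FB$ at $x-i\zeta/|\zeta|$; by Theorem~\ref{huvudsats} the latter condition says precisely that $(x,\zeta/|\zeta|)\in\WF_{\FB}(f)$. Combining the two gives $\WF_{\FB}(f_j)\subseteq\WF_{\FB}(f)\cap(\rr d\times\Gamma_j)$, which is \eqref{1}.

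For the uniqueness part, set $u_j=f_j'-f_j$, so that $\sum_j u_j=0$ and, by subadditivity, $\WF_{\FB}(u_j)\subseteq\WF_{\FB}(f)\cap(\rr d\times\Gamma_j)$. The step I expect to be the main obstacle is to decompose all the $u_j$ using one and the same slicing of the sphere, which is what forces the necessary cancellation. Concretely, with $F_j=K*u_j$ and the same sets $A_k$ as above, I set $u_{jk}=\int_{A_k}F_j(\cdot+i\xi)\,d\xi$. As in the existence part, $\sum_k u_{jk}=u_j$ and $\WF_{\FB}(u_{jk})\subseteq\WF_{\FB}(u_j)\cap(\rr d\times\Gamma_k)\subseteq\WF_{\FB}(f)\cap(\rr d\times(\Gamma_j\cap\Gamma_k))$. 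Because $g\mapsto K*g$ and the integral are linear and $\sum_k u_k=0$, summing over the first index with the second held at $j$ gives $\sum_k u_{kj}=\int_{A_j}\bigl(K*\sum_k u_k\bigr)(\cdot+i\xi)\,d\xi=0$; this is exactly the cancellation that would fail for independently chosen partitions, and it is the crux of the argument.

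Finally I define $f_{jk}=u_{jk}-u_{kj}$. Then $f_{jk}=-f_{kj}$, subadditivity gives $\WF_{\FB}(f_{jk})\subseteq\WF_{\FB}(f)\cap(\rr d\times(\Gamma_j\cap\Gamma_k))$, which is \eqref{2}, and $\sum_k f_{jk}=\sum_k u_{jk}-\sum_k u_{kj}=u_j-0=f_j'-f_j$, so that $f_j'=f_j+\sum_k f_{jk}$ as required. The routine points left to check are the subadditivity and conicity of $\WF_{\FB}$ (inherited from the general theory of these wave-front sets) and the measurability of the $A_j$, none of which presents a genuine difficulty.
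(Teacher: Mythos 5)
Your proof is correct and takes essentially the same route as the paper: your sets $A_j$ coincide (up to the sign convention $F(\cdot+i\xi)$ versus $F(\cdot-i\xi)$) with the supports of the paper's characteristic functions $\phi_j$ of $\Gamma_j\setminus(\Gamma_1\cup\dots\cup\Gamma_{j-1})$, your antisymmetrized $f_{jk}=u_{jk}-u_{kj}$ is exactly the paper's definition \eqref{fjk}, and your key cancellation $\sum_k F_k=K*\sum_k u_k=0$ is the same step the paper uses to get $\sum_k f_{jk}=f_j'-f_j$. Both arguments then obtain \eqref{1} and \eqref{2} by applying Lemma \ref{andra riktningen} to each sliced integral and converting the condition ``$F\notin\FB$ at $x-i\zeta/|\zeta|$'' back into membership in $\WF_{\FB}$ via Theorem \ref{huvudsats}.
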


\begin{proof}
Let $\phi_j$ be the characteristic function on $\Gamma_j\setminus
(\Gamma_1\cup \dots \cup \Gamma_{j-1})$. Then since $\supp
\phi_j\cap \supp \phi_k =\emptyset$ for every $j\neq k$ and $$
\bigcup_{j=1}^m \Gamma_j\setminus (\Gamma_1\cup \dots \cup
\Gamma_{j-1}) = \bigcup_{j=1}^m \Gamma_j =\rr d \back 0 $$ it
follows that $\sum \phi_j=1$ in $\rr d\back 0$. Let $F=K*f$ and
$F_j=K*(f'_j-f_j).$ Then $$ \sum_{j=1}^m F_j=\sum_{j=1}^m
(K*(f_j'-f_j))=K*(\sum_{j=1}^m f'_j-\sum_{j=1}^m f_j) =0. $$ Let
$$ f_j=\int F(\cdot-i\xi)\phi_j(\xi)\, d\xi $$ and
\begin{equation}\label{fjk}
f_{jk} =\int F_j(\cdot-i\xi)\phi_k(\xi)\, d\xi-\int
F_k(\cdot-i\xi)\phi_j(\xi)\, d\xi.
\end{equation}
 Then it follows by straight-forward calculations that $f_j'=f_j + \sum_{k=1}^m f_{jk}$ and $f_{jk}=-f_{kj}$.
More precisely we have that
\begin{multline*}
\sum_{k=1}^m f_{jk} =\int F_j(\cdot-i\xi)\sum_{k=1}^m
\phi_k(\xi)\,d\xi-\int \sum_{k=1}^m
F_k(\cdot-i\xi)\phi_j(\xi)\,d\xi\\[1 ex]= \int F_j(\cdot-i\xi)\,
d\xi =f_j'-f_j.
\end{multline*}
From Theorem \ref{huvudsats} and Lemma \ref{andra riktningen} it
follows that \eqref{1} holds using that $\phi_j$ has support in
$\Gamma_j\cap S^{d-1}$ and letting $d\mu_j(\xi)=\phi_j(\xi)\,
d\xi$. Use the measure defined above and treat the integrals on
the right-hand side of \eqref{fjk} separately. By using the
arguments above we see that the wave-front sets of these integrals
are contained in $$(\WF_{\FB}(f_j)\cup \WF_{\FB}(f'_j))\cap(\rr
d\times \Gamma_k)$$ and $$(\WF_{\FB}(f_k)\cup
\WF_{\FB}(f'_k))\cap(\rr d\times \Gamma_j)$$  respectively. Now
\eqref{2} follows immediately from this together with the fact
that $f_j$ and  $f'_j$ satisfies \eqref{1}.
\end{proof}

\section{Wave-front sets of weighted Fourier BF-types}\label{sec2'}

In this section we consider weighted Fourier BF-spaces and prove results analogous to the non-weighted case. We start by assuming that $\mathscr B$ is a translation invariant BF-space and $\omega\in\mathscr P(\rr d)$. Then let $\mathscr B_1=\mathscr B(\omega)$. By the following lemma we see that there is no restriction to assume that $\omega$ is $v_0$-moderated for some $v_0\in\mathscr P(\rr d)$ which is submultiplicative. 

\par

\begin{lemma}
Assume that $\omega\in\mathscr P(\rr d)$. Then there exists $v_0\in \mathscr P(\rr d)$ such that $v_0$ is submultiplicative and 
$$
\omega(x)\leq C v_0(x),
$$ 
where the constant $C>0$ is independent of $x\in \rr d$.  
\end{lemma}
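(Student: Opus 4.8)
The plan is to produce an explicit submultiplicative majorant of the standard form $v_0(x)=(1+|x|)^N$ for a suitable integer $N\ge 0$, reading off the exponent $N$ from the moderateness of $\omega$. The whole point is to reduce the domination of the given weight to a single elementary inequality for this concrete weight.

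First I would exploit the hypothesis $\omega\in\mathscr P(\rr d)$: by definition $\omega$ is $v$-moderate for some polynomial weight $v$, so that $\omega(x+y)\le C\omega(x)v(y)$ for all $x,y\in\rr d$, which is exactly \eqref{moderate}. Setting $x=0$ gives $\omega(y)\le C\omega(0)\,v(y)$ for every $y\in\rr d$, and this reduces the problem to dominating the single polynomial weight $v$ by a submultiplicative one. Since $v$ has at most polynomial growth, there are constants $C''>0$ and $N\in\mathbf N$ with $v(y)\le C''(1+|y|)^N$ for all $y$; here one may take $N$ to be the degree of $v$.

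Next I would set $v_0(x)=(1+|x|)^N$ and verify the three properties demanded of a submultiplicative weight in Definition \ref{BFspaces}. Submultiplicativity follows from the elementary estimate $1+|x+y|\le 1+|x|+|y|\le (1+|x|)(1+|y|)$, which upon raising to the power $N\ge 0$ yields $v_0(x+y)\le v_0(x)v_0(y)$; that is, \eqref{moderate} holds with $\omega=v=v_0$ and constant $1$. The weight $v_0$ is manifestly even, positive and locally bounded, so it is an admissible submultiplicative weight, and it lies in $\mathscr P(\rr d)$ since a submultiplicative weight is moderate with respect to itself. Finally, chaining the two displayed inequalities gives $\omega(x)\le C\omega(0)C''\,(1+|x|)^N=C_0\,v_0(x)$ with $C_0=C\omega(0)C''$ independent of $x$, which is the claimed bound.

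The argument is essentially bookkeeping, so there is no serious obstacle; the only points that require a moment's care are confirming that the explicit weight $(1+|x|)^N$ is genuinely submultiplicative (rather than merely moderate), which is what makes it admissible as $v_0$, and checking that the extracted exponent $N$ is large enough to absorb the growth of the moderating polynomial $v$.
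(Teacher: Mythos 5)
Your proof is correct and follows essentially the same route as the paper's own argument: evaluate the moderateness inequality at $x=0$ to get $\omega(x)\le C\omega(0)v(x)$, dominate $v$ by a power of a bracket-type weight, and verify submultiplicativity of that power from the elementary inequality for the bracket (the paper uses $\eabs{x}^N$ with $\eabs{x+y}\le 2\eabs x\eabs y$, you use $(1+|x|)^N$ with constant $1$ --- an immaterial difference). No gaps; the argument stands as written.
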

\begin{proof}
Assume that $\omega\in \mathscr P(\rr d)$. Then we can choose constants $N$ and $C$ large enough such that 
$$
\omega(x)\leq C w(0)\eabs x^N.
$$ 
Note that $C$ and $N$ do not depend on $x\in \rr d$. From the fact that
$$
\eabs{x+y}\leq 2 \eabs x\eabs y,
$$ 
for every $x,y\in \rr d$ it follows that $\eabs x^N$ is submultiplicative and polynomially moderated.
\end{proof}

\par

Assume that $\omega, v\in \mathscr P(\rr d)$ and that $v$ is submultiplicative. By the previous lemma we may choose $v_0\in\mathscr P(\rr d)$ such that $v_0$ is submultiplicative and $\omega$ is $v_0$-moderate. Also assume that $\mathscr B$ is a translation invariant BF-space on $\rr d$ with respect to $v$ and let $\mathscr B_1$ be the Banach space which consists of all $f\in L^{1}_{\loc}(\rr d)$ such that $\nm{f}{\mathscr B_1}\equiv \nm{f\omega}{\mathscr B}$ is finite. We recall from Remark 1.2 in \cite{CJT1} that $\mathscr B_1$ then is a translation invariant BF-space with respect to $v_0v$. 

\par

Next we state the main results in the weighted version. Since these results are obtained directly using the statement above together with the analogous results for the non-weighted case we give the following results without proofs.

\renewcommand{\rubrik}{Theorem \ref{huvudsats}$'$}

\begin{tom}

Assume that $f$ and $F$ satisfies the conditions in Theorem \ref{8.4.11}. Also assume that $\omega, v ,v_0\in \mathscr P(\rr d)$ are such that $v$ and $v_0$ are submultiplicative, $\omega$ is $v_0$-moderate, and assume that $\mathscr B$ is a translation invariant BF-space on $\rr d$ with respect to $v$.  Then we have that
$$
(\rr d\times S^{d-1})\cap \WF_{\FB(\omega)}(f)=\{(x,\xi); \, |\xi|=1,\, F \text{ is not in} \, \FB(\omega)\text{ at } x-i\xi\}.
$$
\end{tom}

\renewcommand{\rubrik}{Lemma \ref{andra riktningen}$'$}

\begin{tom}
Let $\mathscr B$ and $\omega$, $v$ and $v_0$ be defined as in the previous theorem. Also let $d \mu$ be a measure on $S^{d-1}$ and $\Gamma$ an open convex cone such that
\begin{equation*}
\scal y \xi <0\, \text{ when } \, 0\neq y\in \overline{\Gamma},\, \xi\in \supp d\mu.
\end{equation*}
If $F$ is  analytic in $\Omega$ and satisfies \eqref{rel1fF}, then
$$ F_1(z)=\int F(z+i\xi) \, d\mu(\xi) $$ is analytic and
$|F_1(z)|\leq C'(1+|\re z|)^a|\im z|^{-b}$ when $\im z\in \Gamma$
and $|\im z|$ is small enough.

\par

For every measure $d\mu$ on $S^{d-1}$ we have
\begin{equation}\label{omvandvikt}
\WF_{\FB(\omega)}(F_\mu)\subset \{(x,\zeta);\, -\zeta/|\zeta| \in \supp d\mu \text{ and } F\not\in \FB(\omega) \, \text{at}\, x-i\zeta/|\zeta|\}
\end{equation}
Here $F_\mu=\int F(\cdot + i\xi)\, d\mu(\xi)$.
\end{tom}

\par

\renewcommand{\rubrik}{Corollary \ref{8.4.13'}$'$}

\begin{tom}
Let $\mathscr B$ and $\omega$, $v$ and $v_0$ be defined as in Theorem \ref{huvudsats}$''$.
Also let $\Gamma_1,\dots ,\Gamma_m$ be closed cones in $\rr d\back 0$
such that $$ \bigcup_{j=1}^m \Gamma_j = \rr d\back 0. $$ For every
$f\in \mathscr S'(\rr d)$ there exists a  decomposition
$f=\sum_{j=1}^m f_j$, where $f_j\in \mathscr S'$ and
\begin{equation}\label{1vikt}
\WF_{\FB(\omega)} (f_j) \subseteq \WF_{\FB(\omega)} (f) \cap (\rr d \times \Gamma_j).
\end{equation}
If there exists another decomposition $f=\sum_{j=1}^m f_j'$ which
also satisfies the conditions above, then $f_j'=f_j + \sum_{k=1}^m
f_{jk}$ where $f_{jk}\in \mathscr S'$, $f_{jk}=-f_{kj}$ and
\begin{equation}\label{2vikt}
\WF_{\FB(\omega)} (f_{jk}) \subset WF_{\FB(\omega)}(f) \cap (\rr d \times (\Gamma_j \cap \Gamma_k)).
\end{equation}
\end{tom}

\par

\section{Wave-front sets of inf type}\label{sec2''}

\par

In this section we show analogous results for wave-front sets of inf types. We recall the definitions of these types of wave-front sets from Coriasco, Johansson and Toft \cite{CJT1}.
Let $\mathscr B_j$ be a translation invariant BF-space on $\rr d$ and $\omega_j\in \mathscr P(\rr d)$, when $j$ belongs to some index set $J$, and consider the array of spaces, given by 
\begin{equation}\label{B_j}
(\mathcal B_j)\equiv (\mathcal B_j)_{j\in J},\quad \text{where} \quad \mathcal B_j= \FB_j(\omega_j), \quad j\in J.
\end{equation} 
\par

 We recall that the
wave-front sets of inf types
$\WF^{\inf}_{(\mathcal B_j)}(f)=\WF^{\inf}_{(\FB_j(\omega_j))}(f)$ consists of all pairs $(x_0,\xi_0)\in \rr d
\times \rr d\back 0$ such that for every
open conical neighbourhood $\Gamma_{\xi_0}$ of $\xi_0$, every
$\fy\in C^{\infty}_0$ with $\fy=1 $ in some open neighbourhood $X$
of $x_0$ and for every $j\in J$  it holds that $$ |\fy f|_{\FB_j(\Gamma_{\xi_0})}\equiv \nm{\mathscr F (\fy f)
\chi_{\Gamma_{\xi_0}}}{\mathscr B_j(\omega_j)} =\infty. $$ Here $\chi_{\Gamma_{\xi_0}}$ is the characteristic
function of $\Gamma_{\xi_0}$.

\par

Before stating analogous results to those for wave-front sets of Fourier BF-spaces we compare the wave-front sets of Fourier BF-spaces with the wave-front sets of inf types defined above.

\par

Since $(x_0,\xi_0)\in\WF^{\inf}_{(\mathcal B_j)}(f)$ if and only if $(x_0,\xi_0)\in\WF_{\mathcal B_j}(f)$ for every $j\in J$, it follows that 
\begin{equation}
\WF^{\inf}_{(\mathcal B_j)}(f)=\bigcap_j\WF_{\mathcal B_j}(f)
\end{equation} 

\par

\renewcommand{\rubrik}{Theorem \ref{huvudsats}$''$}

\begin{tom}
Let $\mathscr B_j$ be a translation invariant BF-space on $\rr d$ and $\omega\in \mathscr P(\rr d)$ for every $j\in J$. Also let $\mathcal B_j$ be defined as in \eqref{B_j} and let $f$ and $F$ satisfy the conditions in Theorem \ref{8.4.11}. Then we have that
$$
(\rr d\times S^{d-1})\cap \WF^{\inf}_{(\mathcal B_j)}(f)=\{(x,\xi); \, |\xi|=1,\, F \text{ is not in} \, \bigcup_j\mathcal B_j \text{ at } x-i\xi\}.
$$
\end{tom}

\begin{proof}
We have that 
\begin{multline}
(\rr d\times S^{d-1})\cap \WF^{\inf}_{(\mathcal B_j)}(f)= \bigcap_j((\rr d\times S^{d-1})\cap \WF_{\mathcal B_j}(f))\\[1 ex]=\bigcap_j\{(x,\xi); \, |\xi|=1,\, F \text{ is not in} \, \mathcal B_j \text{ at } x-i\xi\}\\[1 ex]=\{(x,\xi); \, |\xi|=1,\, F \text{ is not in} \, \bigcup_j\mathcal B_j \text{ at } x-i\xi\}.
\end{multline}
The proof is complete
\end{proof}

\begin{lemma}
Let $d \mu$ be a measure on $S^{d-1}$ and $\Gamma$ an open convex cone such that
\begin{equation*}
\scal y \xi <0\, \text{ when } \, 0\neq y\in \overline{\Gamma},\, \xi\in \supp d\mu.
\end{equation*}
If $F$ is  analytic in $\Omega$ and satisfies \eqref{rel1fF}, then
$$ F_1(z)=\int F(z+i\xi) \, d\mu(\xi) $$ is analytic and
$|F_1(z)|\leq C'(1+|\re z|)^a|\im z|^{-b}$ when $\im z\in \Gamma$
and $|\im z|$ is small enough.

\par

For every measure $d\mu$ on $S^{d-1}$ we have
\begin{equation}\label{omvandinf}
\WF^{\inf}_{(\mathcal B_j)}(F_\mu)\subset \{(x,\zeta);\, -\zeta/|\zeta| \in \supp d\mu \text{ and } F\not\in  \bigcup_j \mathcal B_j \, \text{at}\, x-i\zeta/|\zeta|\}
\end{equation}
Here $F_\mu=\int F(\cdot + i\xi)\, d\mu(\xi)$.
\end{lemma}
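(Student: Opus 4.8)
The plan is to reduce the statement to the weighted version, Lemma~\ref{andra riktningen}$'$, by exploiting the elementary identity
$$
\WF^{\inf}_{(\mathcal B_j)}(g)=\bigcap_{j}\WF_{\mathcal B_j}(g),
$$
recorded just before Theorem~\ref{huvudsats}$''$, which holds for every $g\in\mathscr S'(\rr d)$ because $(x_0,\xi_0)\in\WF^{\inf}_{(\mathcal B_j)}(g)$ exactly when $(x_0,\xi_0)\in\WF_{\mathcal B_j}(g)$ for each $j\in J$. The inf structure thus enters only through this intersection, and everything else is imported index by index.

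The first assertion, that $F_1$ is analytic and obeys $|F_1(z)|\leq C'(1+|\re z|)^a|\im z|^{-b}$ for $\im z\in\Gamma$ with $|\im z|$ small, is identical to the first conclusion of Lemma~\ref{andra riktningen} and does not involve the array $(\mathcal B_j)$ at all. I would therefore simply inherit it from \cite[Theorem~8.4.12]{Ho1}, exactly as in the non-weighted and weighted cases, with no modification.

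For the wave-front inclusion \eqref{omvandinf} I would fix an index $j\in J$ and apply Lemma~\ref{andra riktningen}$'$ with $\mathscr B$ replaced by $\mathscr B_j$ and $\omega$ by $\omega_j$. Since the measure $d\mu$ and the cone $\Gamma$ satisfy the hypotheses uniformly in $j$, this yields for each $j$
$$
\WF_{\mathcal B_j}(F_\mu)\subset\{(x,\zeta);\, -\zeta/|\zeta|\in\supp d\mu \text{ and } F\notin\mathcal B_j \text{ at } x-i\zeta/|\zeta|\}.
$$
Intersecting over $j$ and invoking the identity of the first paragraph then exhibits $\WF^{\inf}_{(\mathcal B_j)}(F_\mu)$ as a subset of the intersection of the right-hand sides, in which the constraint $-\zeta/|\zeta|\in\supp d\mu$ is common to every factor and is untouched by the intersection.

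It remains to recognize that intersection as the right-hand side of \eqref{omvandinf}, and the only issue is the membership condition. Here I would use the convention already adopted in Theorem~\ref{huvudsats}$''$, that $F$ lies in $\bigcup_j\mathcal B_j$ at a point $x-i\zeta/|\zeta|$ precisely when there is \emph{some} index $j$ and a single cutoff $\fy\in C^\infty_0$ equal to one near that point with $\fy F\in\mathcal B_j$. Thus $F\notin\bigcup_j\mathcal B_j$ at the point is equivalent to $F\notin\mathcal B_j$ at that point for \emph{every} $j$, which is exactly the condition defining the intersection; combining this with the preceding display delivers \eqref{omvandinf}. The step I expect to be most delicate is precisely this last translation between ``not in the union'' and ``in every complement,'' since it rests entirely on the chosen meaning of membership-at-a-point for $\bigcup_j\mathcal B_j$; once that convention is fixed the remainder is purely set-theoretic and parallels the non-weighted proof verbatim.
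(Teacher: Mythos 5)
Your proposal is correct and is exactly the argument the paper intends: the paper states this lemma without proof, but the surrounding text (the identity $\WF^{\inf}_{(\mathcal B_j)}(f)=\bigcap_j\WF_{\mathcal B_j}(f)$ and the proof of Theorem~\ref{huvudsats}$''$) shows the intended route is precisely your index-by-index application of Lemma~\ref{andra riktningen}$'$ followed by intersection. Your ``delicate'' final step is also sound, since ``$\fy F\in\bigcup_j\mathcal B_j$ for some cutoff $\fy$'' and ``for some $j$ there is a cutoff $\fy$ with $\fy F\in\mathcal B_j$'' are equivalent by commuting the two existential quantifiers.
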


\par

The following Corollary is an analogue to Corollary 8.4.13 in Hörmander \cite{Ho1}.

\begin{cor}\label{8.4.13'''}
Let $\Gamma_1,\dots ,\Gamma_m$ be closed cones in $\rr d\back 0$
such that $$ \bigcup_{j=1}^m \Gamma_j = \rr d\back 0. $$ For every
$f\in \mathscr S'(\rr d)$ there exists a  decomposition
$f=\sum_{j=1}^m f_j$, where $f_j\in \mathscr S'$ and
\begin{equation}\label{1inf}
\WF^{\inf}_{(\mathcal B_j)} (f_j) \subseteq \WF^{\inf}_{(\mathcal B_j)} (f) \cap (\rr d \times \Gamma_j).
\end{equation}
If there exists another decomposition $f=\sum_{j=1}^m f_j'$ which
also satisfies the conditions above, then $f_j'=f_j + \sum_{k=1}^m
f_{jk}$ where $f_{jk}\in \mathscr S'$, $f_{jk}=-f_{kj}$ and
\begin{equation}\label{2inf}
\WF^{\inf}_{(\mathcal B_j)} (f_{jk}) \subset \WF^{\inf}_{(\mathcal B_j)}(f) \cap (\rr d \times (\Gamma_j \cap \Gamma_k)).
\end{equation}
\end{cor}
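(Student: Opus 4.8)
The plan is to recycle the explicit, BF-space-independent construction used in the proof of Corollary~\ref{8.4.13'}, and to push it through the identity $\WF^{\inf}_{(\mathcal B_j)}(g)=\bigcap_{\iota\in J}\WF_{\mathcal B_\iota}(g)$ recorded before Theorem~\ref{huvudsats}$''$; here I write $\mathcal B_\iota=\FB_\iota(\omega_\iota)$ for a single member of the array, reserving the index $\iota\in J$ for the spaces and $j,k\in\{1,\dots ,m\}$ for the cones, while $(\mathcal B_j)$ keeps its meaning as the name of the array. With $F=K*f$ and $\phi_j$ the characteristic function of $\Gamma_j\setminus(\Gamma_1\cup\dots\cup\Gamma_{j-1})$ I set $f_j=\int F(\cdot-i\xi)\phi_j(\xi)\,d\xi$ and, given a competing decomposition $f=\sum_j f_j'$, define $f_{jk}$ by \eqref{fjk} with $F_j=K*(f_j'-f_j)$. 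The formal identities $\sum_j f_j=f$, $f_j'=f_j+\sum_k f_{jk}$ and $f_{jk}=-f_{kj}$ then hold verbatim as in Corollary~\ref{8.4.13'}, since they use only $\sum_j\phi_j=1$ and $\sum_j K*(f_j'-f_j)=0$ and never mention the spaces. Everything thus reduces to the two wave-front inclusions.

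For \eqref{1inf} I would argue directly. Writing $P_j g=\int(K*g)(\cdot-i\xi)\phi_j(\xi)\,d\xi$, so that $f_j=P_j f$, the inf-type analogue of Lemma~\ref{andra riktningen} applied to $K*g$ with the measure $\phi_j(-\xi)\,d\xi$, combined with Theorem~\ref{huvudsats}$''$, yields the clean bound $\WF^{\inf}_{(\mathcal B_j)}(P_j g)\subseteq\WF^{\inf}_{(\mathcal B_j)}(g)\cap(\rr d\times\Gamma_j)$; taking $g=f$ gives \eqref{1inf}. No subtlety enters because $f_j=P_j f$ is produced from $f$ with no cancellation. (Equivalently, one may apply the weighted Corollary~\ref{8.4.13'}$'$ to each $\mathcal B_\iota$ — the $f_j$ do not depend on $\iota$ because $K$ does not — and then intersect the inclusions over $\iota\in J$.)

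For \eqref{2inf} I would first note that each $P_j$ is the Fourier multiplier with symbol $I(\eta)^{-1}\int e^{\scal\xi\eta}\phi_j(\xi)\,d\xi$, so the $P_j$ commute and $f_{jk}$ collapses to $P_k f_j'-P_j f_k'$. Since $f_j'$ satisfies \eqref{1inf}, the bound of the previous paragraph gives $\WF^{\inf}_{(\mathcal B_j)}(P_k f_j')\subseteq\WF^{\inf}_{(\mathcal B_j)}(f)\cap(\rr d\times(\Gamma_j\cap\Gamma_k))$, and symmetrically for $P_j f_k'$. The cone component of \eqref{2inf} then follows by a point-by-point argument through the intersection formula: a covector $(x_0,\xi_0)$ with $\xi_0\in\Gamma_k\setminus\Gamma_j$ lying in $\WF_{\mathcal B_\iota}(f_{jk})$ for every $\iota$ would be forced into $\bigcap_\iota\WF_{\mathcal B_\iota}(f_j')=\WF^{\inf}_{(\mathcal B_j)}(f_j')\subseteq\rr d\times\Gamma_j$, a contradiction, and symmetrically for $\Gamma_j\setminus\Gamma_k$. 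The hard part will be the $\WF^{\inf}_{(\mathcal B_j)}(f)$ component, which along this route asks for a subadditivity statement $\WF^{\inf}_{(\mathcal B_j)}(P_k f_j'-P_j f_k')\subseteq\WF^{\inf}_{(\mathcal B_j)}(P_k f_j')\cup\WF^{\inf}_{(\mathcal B_j)}(P_j f_k')$. This is exactly where the inf type departs from a single space: an intersection of wave-front sets need not be subadditive, since a covector can lie in $\WF_{\mathcal B_{\iota_1}}(A)\setminus\WF_{\mathcal B_{\iota_1}}(B)$ and in $\WF_{\mathcal B_{\iota_2}}(B)\setminus\WF_{\mathcal B_{\iota_2}}(A)$ for two distinct spaces, hence in $\WF_{\mathcal B_\iota}(A-B)$ for all $\iota$ while lying in neither inf wave-front set. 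The uncontrolled contributions come from $\WF_{\mathcal B_\iota}(f_j')$, on which the hypothesis only fixes the intersection over $\iota$. I would resolve this either by strengthening the assumption on the competing decomposition to the spacewise inclusions \eqref{1vikt} for every $\iota\in J$ (so that Corollary~\ref{8.4.13'}$'$ applies space by space and the inclusions may then be intersected), or by a finer bookkeeping showing that, for a covector outside $\WF^{\inf}_{(\mathcal B_j)}(f)$, the residual singular contributions of $f_j'$ and $f_k'$ cannot survive in all spaces simultaneously.
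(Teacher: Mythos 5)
Your construction and your treatment of \eqref{1inf} follow exactly the route the paper intends: the paper in fact states this corollary \emph{without any proof}, presenting it as the inf-type analogue of Corollary \ref{8.4.13'}, and the intended argument is precisely yours --- the functions $f_j=\int F(\cdot-i\xi)\phi_j(\xi)\,d\xi$ and the $f_{jk}$ of \eqref{fjk} do not depend on the space index, the algebraic identities are space-free, and \eqref{1inf} follows either from the inf-type lemma together with Theorem \ref{huvudsats}$''$, or by intersecting over $\iota\in J$ the spacewise inclusions furnished by Corollary \ref{8.4.13'}$'$. Your commutation observation $f_{jk}=P_kf_j'-P_jf_k'$ and the resulting point-by-point proof of the cone component of \eqref{2inf} are correct, and are in fact sharper than what a verbatim transfer of the single-space proof gives.

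The obstruction you isolate in the $\WF^{\inf}_{(\mathcal B_j)}(f)$-component of \eqref{2inf} is a genuine gap --- but it is a gap in the paper, not an oversight of yours. The proof of Corollary \ref{8.4.13'} controls $f_{jk}$ through single-space subadditivity, $\WF_{\FB}(u+v)\subseteq\WF_{\FB}(u)\cup\WF_{\FB}(v)$ (this is exactly where the bound by $(\WF_{\FB}(f_j)\cup\WF_{\FB}(f_j'))\cap(\rr d\times\Gamma_k)$ comes from), and subadditivity fails for inf-type sets: $\bigcap_\iota(A_\iota\cup B_\iota)$ strictly contains $(\bigcap_\iota A_\iota)\cup(\bigcap_\iota B_\iota)$ in general, while hypothesis \eqref{1inf} for the competing decomposition constrains only $\bigcap_\iota\WF_{\mathcal B_\iota}(f_j')$ and leaves each individual $\WF_{\mathcal B_\iota}(f_j')$ uncontrolled. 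So neither your argument nor the paper's implicit one establishes \eqref{2inf} as stated, and your first remedy (assuming the spacewise inclusions for the competing decomposition) proves a strictly weaker statement, since it assumes more. If you want to pursue the ``finer bookkeeping'' option, here is where it can start: because the $\phi_j$ are characteristic functions of a partition of $S^{d-1}$, the multiplier of $P_k$ is, at every direction having positive angular distance from the boundaries of all the sets $\supp\phi_l$, either exponentially close to $1$ or exponentially small; hence $f_{jk}=P_kf_j'-P_jf_k'$ agrees microlocally with $f_j'$ (respectively with $-f_k'$, respectively with $0$) at such directions, and there \eqref{2inf} follows from the hypothesis on $\WF^{\inf}_{(\mathcal B_j)}(f_j')$ alone. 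The unresolved difficulty is thereby confined to the set of boundary directions of the partition, where singularities of $f_j'$ and $f_k'$ living in different spaces $\mathcal B_\iota$ can genuinely mix; closing it (or producing a counterexample there) is what a complete proof still requires.
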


\par

\section{Wave-front sets of modulation space types}\label{sec2'''}
In this section we show that the results obtained for wave-front sets of Fourier BF-space types also hold for wave-front sets of modulation space types.

\par

We start by defining general types of modulation spaces. Let (the window) $\phi\in \mathscr S'(\rr d)\back 0$ be fixed, and let $f\in\mathscr S'(\rr d)$. Then the short-time Fourier transform $V_\phi f$ is the element in $\mathscr S'(\rr {2d})$, defined by the formula 
$$
(V_\phi f)(x,\xi)\equiv \mathscr F(f\cdot \overline{\phi(\cdot -x)})(\xi).
$$
We usually assume that $\phi\in \mathscr S(\rr d)$, and in this case the short-time Fourier transform $(V_\phi f)$ takes the form 
$$
(V_\phi f )(x,\xi)=(2 \pi)^{-d/2} \int_{\rr d} f(y)\overline{\phi(y-x)} e^{-\scal y \xi}\, dy,
$$
when $f\in \mathscr S(\rr d)$.

Now let $\mathscr B$ be a translation invariant BF-space on $\rr {2d}$, with respect to $v\in \mathscr P(\rr {2d})$. Also let $\phi\in \mathscr S(\rr d)\back 0$ and $\omega \in \mathscr P(\rr {2d})$ be such that $\omega$ is $v$-moderate. Then the modulation space $M(\omega)=M(\omega,\mathscr B)$ is a Banach space with the norm 
\begin{equation}
\nm{f}{M(\omega,\mathscr B)}\equiv\nm{V_\phi f\omega}{\mathscr B}
\end{equation} 
(cf. \cite{FeGro1}).

\par

 Assume that
$\omega\in \mathscr P(\rr {2d})$.  We recall that the 
wave-front sets of modulation space types $\WF_{M(\omega,\mathscr B)}(f)$ consists of all pairs $(x_0,\xi_0)\in \rr d
\times \rr d\back 0$ such that $$ |\fy f|_{M(\omega,\mathscr B,
\Gamma_{\xi_0})}\equiv \nm{ V_\phi(\fy f)
\chi_{\Gamma_{\xi_0}}\omega}{\mathscr B} =\infty, $$ for every
open conical neighbourhood $\Gamma_{\xi_0}$ of $\xi_0$, and
$\fy\in C^{\infty}_0$ with $\fy=1 $ in some open neighbourhood $X$
of $x_0$. Here $\chi_{\Gamma_{\xi_0}}$ is the characteristic
function of $\Gamma_{\xi_0}$. 
It can also be showed that wave-front sets of modulation space types and wave-front sets of Fourier BF-types coincide. More precisely, let 
\begin{equation}\label{B0}
\mathscr B_0= \{f\in \mathscr S'(\rr d) :\, \fy\otimes f\in \mathscr B\}.
\end{equation}
Then $\mathscr B_0$ is a translation invariant BF-space on $\rr d$, which is independent of the choice of $\fy$. Furthermore $M(\omega,\mathscr B)$ and $\FB_0$ are locally the same and 
$$
\WF_{\FB_0(\omega)}(f)=\WF_{M(\omega,\mathscr B)}(f).
$$
(Cf. Coriasco, Johansson and Toft
\cite{CJT1}.) 

By using the previous results in combination with this we obtain the following results. 

\begin{defn}\label{MB i x}
Assume that $f\in \mathscr D'(\rr d)$, $\mathscr B$ is a
translation invariant BF-space and $\omega \in \mathscr P(\rr {2d})$.
Then $f\in M(\omega,\mathscr B)$ at $x_0$ if and only if there is
some neighbourhood $X$ of $x_0$ such that for some $\fy\in
C^{\infty}_0$ with $\fy\equiv 1$ in $X$ we have that $\fy f \in
M(\omega,\mathscr B)$.
\end{defn}
We recognize by the arguments before that since the definition above only concerns local properties it holds that  $f\in M(\omega,\mathscr B)$ at $x_0$ if and only if $f\in \FB_0(\omega)$ at $x_0$, where $\mathscr B_0$ is given by \eqref{B0}.

We note that if $f$ belongs to $M(\omega,\mathscr B)$ at $x_0$ then $(x_0,\xi_0)\notin\WF_{M(\omega,\mathscr B)}(f)$ for any $\xi_0\in \rr d\back 0.$

\begin{defn}
For $f\in \mathscr D'(X)$ the singular support $\singsupp_{M(\omega,\mathscr B)} f$ is the smallest closed subset of $X$ such that $f$ is in $M(\omega,\mathscr B)$ in the complement.
\end{defn}

\begin{thm}
Assume that $f\in \mathscr D'(\rr d)$, $\mathscr B$ is a
translation invariant BF-space and $\omega\in\mathscr P (\rr
{2d})$. The projection of $\WF_{M(\omega,\mathscr B)} (f)$ in $X$ is equal to
$\singsupp_{M(\omega,\mathscr B)} f$.
\end{thm}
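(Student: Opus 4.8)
The plan is to follow the scheme of the corresponding theorem for $\WF_{\FB(\omega)}$ proved earlier in the paper, treating the two inclusions separately. The only inputs needed are the remark preceding the statement---namely that if $f$ belongs to $M(\omega,\mathscr B)$ at $x_0$ then $(x_0,\xi_0)\notin\WF_{M(\omega,\mathscr B)}(f)$ for every $\xi_0\in\rr d\back 0$---together with the definition of $\singsupp_{M(\omega,\mathscr B)}f$ and a compactness argument on the unit sphere. Alternatively, one may reduce everything to the already established Fourier BF-space version by invoking the local identifications $\WF_{\FB_0(\omega)}(f)=\WF_{M(\omega,\mathscr B)}(f)$ and the equivalence ``$f\in M(\omega,\mathscr B)$ at $x_0$ if and only if $f\in\FB_0(\omega)$ at $x_0$'', where $\mathscr B_0$ is given by \eqref{B0}; the statement would then be a transcription of the earlier theorem. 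I prefer the direct route, which does not require re-deriving the identification.

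First I would prove that the projection of $\WF_{M(\omega,\mathscr B)}(f)$ is contained in $\singsupp_{M(\omega,\mathscr B)}f$. Suppose $x_0\notin\singsupp_{M(\omega,\mathscr B)}f$. By the definition of the singular support, $f$ is in $M(\omega,\mathscr B)$ in the complement of that closed set, hence $f\in M(\omega,\mathscr B)$ at $x_0$. The remark recalled above then yields $(x_0,\xi_0)\notin\WF_{M(\omega,\mathscr B)}(f)$ for every $\xi_0\in\rr d\back 0$, so $x_0$ does not belong to the projection.

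For the reverse inclusion, suppose $x_0$ is not in the projection, that is $(x_0,\xi_0)\notin\WF_{M(\omega,\mathscr B)}(f)$ for all $\xi_0\in\rr d\back 0$. For each direction the definition of the wave-front set furnishes an open conical neighbourhood $\Gamma_{\xi_0}$, a cut-off $\fy$, and the finiteness of $\nm{V_\phi(\fy f)\chi_{\Gamma_{\xi_0}}\omega}{\mathscr B}$; the point is to make these choices uniform in the direction. Here I would use that $S^{d-1}$ is compact: covering it by finitely many of the cones $\Gamma_{\xi_0}$ and intersecting the corresponding neighbourhoods of $x_0$, I obtain a neighbourhood $K$ of $x_0$ with $\WF_{M(\omega,\mathscr B)}(f)\cap(K\times\rr d)=\emptyset$, and then a single $\fy_{x_0}\in C^\infty_0$ equal to $1$ in a neighbourhood $X$ of $x_0$ with $\fy_{x_0}f\in M(\omega,\mathscr B)$. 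Thus $x_0\notin\singsupp_{M(\omega,\mathscr B)}f$, which completes the argument.

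I expect the main (indeed only) obstacle to be the uniformity step in the second inclusion: passing from the direction-by-direction finiteness of the localized short-time Fourier transform to a single cut-off and a single finite conical cover valid simultaneously for all $\xi_0\in S^{d-1}$. This is exactly the compactness argument, and it goes through as in the $\FB(\omega)$ case because the microlocal definition of $\WF_{M(\omega,\mathscr B)}(f)$ has the same structure, with $V_\phi$ in place of $\mathscr F$; no genuinely new difficulty arises.
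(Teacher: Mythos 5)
Your proposal is correct and takes essentially the same route as the paper: part one applies the remark that $f\in M(\omega,\mathscr B)$ at $x_0$ excludes $(x_0,\xi_0)$ from $\WF_{M(\omega,\mathscr B)}(f)$ for every $\xi_0\in\rr d\back 0$, and part two is the same compactness argument on the sphere yielding a neighbourhood $K$ with $\WF_{M(\omega,\mathscr B)}(f)\cap(K\times\rr d)=\emptyset$ and a single cut-off $\fy_{x_0}$ with $\fy_{x_0}f\in M(\omega,\mathscr B)$. The alternative reduction via $\mathscr B_0$ that you mention but set aside is also not the route the paper takes, so no comparison is needed there.
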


\begin{proof}
(a) Assume that $x_0\not\in \singsupp_{M(\omega,\mathscr B)}(f)$. Then $f$ belongs to $M(\omega,\mathscr B)$ at $x_0$. This implies that $(x_0,\xi_0)\not\in\WF_{M(\omega,\mathscr B)} (f)$, for any $\xi_0\in \rr d\back 0$.

\medspace

(b) Assume that $(x_0,\xi_0)\not\in\WF_{M(\omega,\mathscr B)} (f)$ for all
$\xi_0\in \rr d\back 0$. Then we can choose a neighbourhood $K$ of
$x_0$ such that $\WF_{M(\omega,\mathscr B)} (f)\cap (K\times \rr
d)=\emptyset$. This implies that we can choose a function
$\fy_{x_0}\in C^\infty _0$ which is equal to $1$ in a
neighbourhood $X$ of $x _0$ such that $\fy_{x_0} f \in M(\omega,\mathscr B)$. Hence $x_0\not\in \singsupp_{M(\omega,\mathscr B)}
(f)$.
\end{proof}

\par

Next theorem is analogous to Theorem \ref{huvudsats}.

\renewcommand{\rubrik}{Theorem \ref{huvudsats}$'$}

\begin{tom}

Assume that $f$ and $F$ satisfy the conditions in Theorem \ref{8.4.11}. Also let $\mathscr B$ be a translation invariant BF-space and $\omega \in \mathscr P(\rr {2d})$. Then we have that
$$
(\rr d\times S^{d-1})\cap \WF_{M(\omega,\mathscr B)}(f)=\{(x,\xi); \, |\xi|=1,\, F \text{ is not in} \, M(\omega,\mathscr B)\text{ at } x-i\xi\}.
$$
\end{tom}

We remark that $F$ is in $M(\omega,\mathscr B)$ at $x-i\xi$ if for some neighbourhood $V$ of $(x,\xi)$ there exists some localization $\fy \in C^{\infty}_0$ with $\fy=1$ in $V$ such that $\fy f\in M(\omega,\mathscr B)$. 
\par

\begin{proof}
Let $\mathscr B_0$ be defined as before. Then it follows that 
$$
(\rr d\times S^{d-1})\cap \WF_{M(\omega,\mathscr B)}(f)=(\rr d\times S^{d-1})\cap \WF_{\FB_0(\omega)}(f).
$$
From the result in the previous section we also have that 
$$
(\rr d\times S^{d-1})\cap \WF_{\FB_0(\omega)}(f)=\{(x,\xi); \, |\xi|=1,\, F \text{ is not in} \, \FB_0(\omega) \text{ at } x-i\xi\}.
$$
Now since the right-hand side only concern local properties and $\FB_0(\omega)$ and $M(\omega,\mathscr B)$ are locally the same it follows that 
\begin{multline*}
\{(x,\xi); \, |\xi|=1,\, F \text{ is not in} \, \FB_0(\omega) \text{ at } x-i\xi\}\\[1 ex]=\{(x,\xi); \, |\xi|=1,\, F \text{ is not in} \, M(\omega,\mathscr B) \text{ at } x-i\xi\}.
\end{multline*}
This completes the proof.
\end{proof}

By arguments given before it is obvious that Lemma \ref{andra riktningen} and Corollary \ref{8.4.13'} hold also for modulation spaces instead of Fourier BF-spaces. We therefore state the following results without proofs.

\renewcommand{\rubrik}{Lemma \ref{andra riktningen}$'$}

\begin{tom}
Let $\mathscr B$ be a translation invariant BF-space and $\omega\in \mathscr P(\rr {2d})$. Also let $d \mu$ be a measure on $S^{d-1}$ and $\Gamma$ an open convex cone such that
\begin{equation*}
\scal y \xi <0\, \text{ when } \, 0\neq y\in \overline{\Gamma},\, \xi\in \supp d\mu.
\end{equation*}
If $F$ is  analytic in $\Omega$ and satisfies \eqref{rel1fF}, then
$$ F_1(z)=\int F(z+i\xi) \, d\mu(\xi) $$ is analytic and
$|F_1(z)|\leq C'(1+|\re z|)^a|\im z|^{-b}$ when $\im z\in \Gamma$
and $|\im z|$ is small enough.

\par

For every measure $d\mu$ on $S^{d-1}$ we have
\begin{equation}\label{omvandmod}
\WF_{M(\omega,\mathscr B)}(F_\mu)\subset \{(x,\zeta);\, -\zeta/|\zeta| \in \supp d\mu \text{ and } F\not\in M(\omega,\mathscr B) \, \text{at}\, x-i\zeta/|\zeta|\}
\end{equation}
Here $F_\mu=\int F(\cdot + i\xi)\, d\mu(\xi)$.
\end{tom}

\par

\renewcommand{\rubrik}{Corollary \ref{8.4.13'}$'$}

\begin{tom} Let $\mathscr B$ be a translation invariant BF-space and $\omega\in \mathscr P(\rr {2d})$. Also let $\Gamma_1,\dots ,\Gamma_m$ be closed cones in $\rr d\back 0$
such that $$ \bigcup_{j=1}^m \Gamma_j = \rr d\back 0. $$ For every
$f\in \mathscr S'(\rr d)$ there exists a  decomposition
$f=\sum_{j=1}^m f_j$, where $f_j\in \mathscr S'$ and
\begin{equation}\label{1M}
\WF_{M(\omega,\mathscr B)} (f_j) \subseteq \WF_{M(\omega,\mathscr B)} (f) \cap (\rr d \times \Gamma_j).
\end{equation}
If there exists another decomposition $f=\sum_{j=1}^m f_j'$ which
also satisfies the conditions above, then $f_j'=f_j + \sum_{k=1}^m
f_{jk}$ where $f_{jk}\in \mathscr S'$, $f_{jk}=-f_{kj}$ and
\begin{equation}\label{2M}
\WF_{M(\omega,\mathscr B)} (f_{jk}) \subset WF_{M(\omega,\mathscr B)}(f) \cap (\rr d \times (\Gamma_j \cap \Gamma_k)).
\end{equation}
\end{tom}

\par

\section{Some additional properties}\label{sec3}
In this section we prove some further properties for the wave-front sets of Fourier Banach types using results from the previous section.

\par

\begin{thm}\label{8.4.15'}
Let $f\in \mathscr D'(X)$, $X\subseteq \rr d$, and $\WF_{\FB}(f)\subseteq X\times \Gamma^\circ$, where $\Gamma^\circ$ is the dual of an open convex cone $\Gamma$. If $\overline{X_1}\subseteq X$ and $\Gamma_1$ is an open convex with $\overline{\Gamma_1}\subseteq \Gamma\cup \{0\}$, then there exists a function $F$ that is analytic in $\{x+iy;\, x\in X_1, \, y\in \Gamma_1,\, |y|<\gamma\}$, such that
$$
|F(x+iy)|<C|y|^{-N}, \qquad y\in \Gamma_1,\qquad x\in X_1,
$$
and such that the limit of $F(\cdot-iy)$ in $\Gamma_1$, when $y\to 0$, differs from $f$ by an element in $\FB(X_1)$.
\end{thm}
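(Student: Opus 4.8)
The plan is to realize $f$, modulo $\FB$, as the boundary value of a suitable piece of the spherical representation $f=\int_{|\xi|=1}F_0(\cdot+i\xi)\,d\xi$ provided by Theorem~\ref{8.4.11}, where $F_0=K*f$ and \eqref{rel1fF} holds. First I would record the meaning of the hypothesis through Theorem~\ref{huvudsats}: since $\WF_{\FB}(f)\cap(\rr d\times S^{d-1})\subseteq X\times\Gamma^\circ$, the function $F_0$ can fail to lie in $\FB$ at a point $x-i\xi$ with $|\xi|=1$ only if $x\in X$ and $\xi\in\Gamma^\circ$. Thus the singular directions of the representation are confined to $\Gamma^\circ$, and every direction off $\Gamma^\circ$ is harmless.

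Next I would split the surface measure on $S^{d-1}$ by a smooth cut-off adapted to the two cones. Dualizing the strict inclusion $\overline{\Gamma_1}\subseteq\Gamma\cup\{0\}$ gives $\Gamma^\circ\back{\{0\}}\subseteq\operatorname{int}\Gamma_1^\circ$, so $(-\Gamma^\circ)\cap S^{d-1}$ is a compact subset of the open set $(-\operatorname{int}\Gamma_1^\circ)\cap S^{d-1}$. I would choose $\chi\in C^\infty(S^{d-1})$ equal to $1$ on a neighbourhood of $(-\Gamma^\circ)\cap S^{d-1}$ and supported in $(-\operatorname{int}\Gamma_1^\circ)\cap S^{d-1}$, and set $d\mu=\chi\,d\xi$ and $d\nu=(1-\chi)\,d\xi$, so that $f=F_\mu+F_\nu$ with $F_\mu(z)=\int F_0(z+i\xi)\,d\mu(\xi)$ and $F_\nu=\int F_0(\cdot+i\xi)\,d\nu(\xi)$.

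The error term is controlled by Lemma~\ref{andra riktningen}: because $\supp d\nu$ is disjoint from $-\Gamma^\circ$, no $\zeta$ can meet both requirements in \eqref{omvand} (the first would force $\zeta/|\zeta|\notin\Gamma^\circ$, while the second, via the hypothesis, forces $\zeta/|\zeta|\in\Gamma^\circ$), whence $\WF_{\FB}(F_\nu)=\emptyset$ and $F_\nu\in\FB$; in particular $F_\nu\in\FB(X_1)$. For the main term I would invoke Lemma~\ref{andra riktningen} again, now with the cone $\Gamma_1$: the support condition $\supp d\mu\subseteq-\operatorname{int}\Gamma_1^\circ$ is exactly $\scal{y}{\xi}<0$ for $0\neq y\in\overline{\Gamma_1}$ and $\xi\in\supp d\mu$, so $F:=F_\mu$ is analytic for $\im z\in\Gamma_1$, $|\im z|<\gamma$, and satisfies $|F(z)|\le C'(1+|\re z|)^a|\im z|^{-b}$. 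Restricting $\re z=x$ to the (relatively compact) set $X_1$ absorbs the polynomial factor and gives $|F(x+iy)|\le C|y|^{-N}$ with $N=b$.

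Finally I would identify the boundary value. By Theorem~\ref{8.4.8} the analytic function $F$ has a distributional limit as $\im z\to 0$ inside $\Gamma_1$, and a dominated-convergence argument, using the continuity of $\xi\mapsto F_0(\cdot+i\xi)$ into $\mathscr S'$ from Theorem~\ref{8.4.11} together with the bound \eqref{rel1fF}, shows that this limit is precisely the distribution $F_\mu$ appearing in \eqref{rel2fF}. Subtracting from $f=F_\mu+F_\nu$ then leaves $F_\nu\in\FB(X_1)$, which is the asserted conclusion. I expect the last step, together with the cone bookkeeping of the second paragraph, to be the main obstacle: one must arrange the cut-off so that every direction responsible for the singularities of $f$ lies in the range where Lemma~\ref{andra riktningen} produces analyticity in $\Gamma_1$ (this is exactly what $\Gamma^\circ\back{\{0\}}\subseteq\operatorname{int}\Gamma_1^\circ$ secures), and then verify that the boundary value of the resulting wedge function reproduces $F_\mu$ rather than some other representative, so that its discrepancy with $f$ is precisely the regular piece $F_\nu$.
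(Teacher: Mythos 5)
Your overall strategy---split the spherical representation into the directions near $-\Gamma^\circ$ and the rest, use Theorem \ref{huvudsats} to show the piece coming from the good directions has empty $\FB$-wave-front set, and use Lemma \ref{andra riktningen} to get analyticity and the $|y|^{-N}$ bound for the other piece---is exactly the structure of the paper's proof, and your cone bookkeeping is correct: the dual inclusion $\Gamma^\circ\back{\{0\}}\subseteq\operatorname{int}\Gamma_1^\circ$ and a cutoff equal to $1$ near $(-\Gamma^\circ)\cap S^{d-1}$ and supported in $(-\operatorname{int}\Gamma_1^\circ)\cap S^{d-1}$ is precisely what is needed (the paper's text asks that $\overline M$ lie in the interior of $\Gamma^\circ$, which appears to be a slip for $\Gamma_1^\circ$; your version is the right one).

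However, there is a genuine gap at the very first step: you apply Theorem \ref{8.4.11}, the representation \eqref{rel2fF}, and Theorem \ref{huvudsats} to $f$ itself, writing $F_0=K*f$ and $f=F_\mu+F_\nu$. The hypothesis only gives $f\in\mathscr D'(X)$ with $X\subseteq\rr d$ open: $f$ need not be temperate, and when $X\neq\rr d$ it is not even a distribution on all of $\rr d$, so $K*f$ is undefined and none of the cited results apply to $f$. The paper's proof begins by removing exactly this obstruction: set $v=\chi f$ with $\chi\in C_0^\infty(X)$ equal to $1$ on a neighbourhood of $\overline{X_1}$, so that $v\in\mathscr E'(\rr d)\subseteq\mathscr S'(\rr d)$ and $V=K*v$ makes sense; since wave-front sets are local and $v=f$ where $\chi=1$ (the paper invokes $\WF_{\FB}(\chi f)\subseteq\WF_{\FB}(f)\subseteq X\times\Gamma^\circ$), the hypothesis still controls $\WF_{\FB}(v)$ over $X_1$, and the conclusion for $v$ transfers to $f$ because $f-v$ vanishes near $X_1$. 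Your argument then goes through essentially verbatim with $v$ and $V$ in place of $f$ and $F_0$. Note also that this localization, not relative compactness of $X_1$ (which the statement does not assume), is what controls the factor $(1+|\re z|)^a$ from Lemma \ref{andra riktningen}: since $v$ has compact support and $K$ decays exponentially, the bound is uniform in $\re z$ over $X_1$.
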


\begin{proof}
Set $v=\chi f$ where $\chi \in C^\infty_0$ is equal to $1$ in $X_1$. If $V=K*v$ is defined as in Theorem \ref{huvudsats}, then
$$
\WF_{\FB}(v)=\WF_{\FB}(\chi f)\subseteq \WF_{\FB}(f) \subseteq X\times\Gamma^\circ
$$
gives
$$
\complement (X\times \Gamma^\circ) \subseteq \complement \WF_{\FB} (v).
$$
From this follows that
$$
X_1\times \complement \Gamma^\circ \subseteq \complement \WF_{\FB}(v).
$$
Then Theorem \ref{huvudsats} implies that $V\in \FB$ at every point in $X_1+ i(S^{d-1}\cap \complement (-\Gamma^\circ))$. Choose an open set $M$ with $\Gamma^\circ\cap S^{d-1}\subseteq M\subseteq S^{d-1}$ and where $\overline M$ belongs to the interior of $\Gamma^\circ$. Then $v=v_1+v_2$ where
$$
v_1=\int_{-\xi\not\in M}V(\cdot+i\xi)\, d\xi
$$
belongs to $\FB$ in $X_1$ and $v_2$ is the boundary value of the analytic function
$$
F(z)=\int_{-\xi\in M}V(z+i\xi)\, d\xi,\qquad \im z\in \Gamma_1,\qquad |\im z|<\gamma.
$$
Lemma \ref{andra riktningen} completes the proof.
\end{proof}

As mentioned before we have that $\WF_{\FB}(f) \subseteq \WF_A(f)$.
In the following proposition we describe a relation between the wave-front sets of Fourier Banach function types and analytic wave-front sets.

\begin{prop}\label{FB och analytisk}
Let $\mathscr B$ be a translation invariant BF-space and $ f\in\mathscr D'(\rr d)$. Then
\begin{equation*}
\WF_{\FB} (f) =\bigcap_{g\in \FB} \WF_A(f-g).
\end{equation*}
\end{prop}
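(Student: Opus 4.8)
The plan is to prove the two inclusions separately, exploiting the fact that $\WF_{\FB}(f)$ is characterized by localization while $\WF_A(f-g)$ is the analytic wave-front set, and that every $g\in\FB$ can be absorbed into the analytic part.

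First I would establish the inclusion $\WF_{\FB}(f)\subseteq\bigcap_{g\in\FB}\WF_A(f-g)$. Fix a point $(x_0,\xi_0)\in\WF_{\FB}(f)$ and an arbitrary $g\in\FB$. Since $\FB$ is translation invariant and $g\in\FB$ implies $\fy g\in\FB$ for every $\fy\in C^\infty_0$ (the space is local, as noted after \eqref{propupps}), one has $\WF_{\FB}(f)=\WF_{\FB}(f-g)$: subtracting a global $\FB$-element does not change membership of any localization $\fy f$ in $\FB$. Thus $(x_0,\xi_0)\in\WF_{\FB}(f-g)$. Now I invoke the general inclusion $\WF_{\FB}(h)\subseteq\WF_A(h)$ recorded in the excerpt (``$\WF_{\FB}(f)\subseteq\WF_A(f)$''), applied to $h=f-g$, to conclude $(x_0,\xi_0)\in\WF_A(f-g)$. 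Since $g$ was arbitrary, $(x_0,\xi_0)$ lies in the intersection.

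For the reverse inclusion I would argue by contraposition: suppose $(x_0,\xi_0)\notin\WF_{\FB}(f)$ and produce a single $g\in\FB$ with $(x_0,\xi_0)\notin\WF_A(f-g)$. The idea is to split off the part of $f$ responsible for the direction $\xi_0$. Using Corollary \ref{8.4.13'}, choose closed cones $\Gamma_1,\dots,\Gamma_m$ covering $\rr d\back 0$ with $\xi_0$ interior to $\Gamma_1$ only, and decompose $f=\sum_j f_j$ with $\WF_{\FB}(f_j)\subseteq\WF_{\FB}(f)\cap(\rr d\times\Gamma_j)$. Because $(x_0,\xi_0)\notin\WF_{\FB}(f)$, a suitable localization of $f_1$ near $x_0$ lies in $\FB$ in the direction $\xi_0$; refining the cone decomposition so that $\Gamma_1$ is a narrow cone about $\xi_0$, one arranges that $g:=\fy f_1\in\FB$ for an appropriate $\fy\in C^\infty_0$ equal to $1$ near $x_0$. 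Then $f-g$ has, near $(x_0,\xi_0)$, only the contributions $f_j$ for $j\neq 1$, whose wave-front sets avoid the direction $\xi_0$; together with the analytic representation $F=K*(f-g)$ of Theorem \ref{8.4.11} and the exponential decay of $K$ from Lemma~1.1, the localized $F$ extends analytically across $x_0-i\xi_0$, so $(x_0,\xi_0)\notin\WF_A(f-g)$, and a fortiori the point is not in the intersection.

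The main obstacle is the reverse inclusion, specifically verifying that the single subtracted $g$ can be taken in $\FB$ globally while simultaneously killing the analytic singularity in the direction $\xi_0$. One must ensure that the decomposition from Corollary \ref{8.4.13'} is compatible with the analytic-wave-front estimates coming from the kernel $K$ and the bounds in Theorem \ref{8.4.8}; the delicate point is controlling $F=K*(f-g)$ uniformly near the boundary $|\im z|\to 1$ so that the $C^L\subseteq\FB$ membership upgrades to genuine analyticity of the boundary value, which is where the exponential decay $K(z)=O(e^{-c|z|})$ in cones and the Cauchy-inequality derivative bounds must be used carefully.
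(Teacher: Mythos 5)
Your first inclusion is correct and is essentially the paper's own argument: subtracting $g\in\FB$ does not change the directional norms $\nm{\mathscr F(\fy f)\chi_{\Gamma}}{\mathscr B}$ up to a finite error, because $\fy g\in\FB$ (Lemma \ref{produkt}, locality of $\FB$), and then one applies $\WF_{\FB}(f-g)\subseteq\WF_A(f-g)$.

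The reverse inclusion, however, has a genuine gap, and it is exactly at the point your last paragraph worries about. Your construction via Corollary \ref{8.4.13'} produces $g=\fy f_1\in\FB$ such that, near $x_0$, $f-g=\sum_{j\neq 1}f_j$ with $\WF_{\FB}(f_j)$ avoiding the direction $\xi_0$. But this only controls the \emph{Fourier BF} wave-front set of $f-g$; it gives no information whatsoever about $\WF_A(f-g)$, because the inclusion $\WF_{\FB}\subseteq\WF_A$ goes the wrong way for your purpose: a distribution can have empty $\WF_{\FB}$ (even lie globally in $\FB$) and still have analytic singularities everywhere on its support. Knowing that the remaining pieces $f_j$ are microlocally in $\FB$ near $(x_0,\xi_0)$ does not make $F=K*(f-g)$ extend analytically across $x_0-i\xi_0$; analyticity of a boundary value requires exponential decay of the relevant Fourier localizations, which $\FB$-membership (or even $C^\infty$-regularity) does not supply. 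The paper closes this gap with Lemma \ref{utvavPTT2}: (a) one builds $g$ by a sharp conic cutoff on the Fourier side, $\widehat g=\mathscr F(\fy_1 f)\chi_\Gamma$, which lies in $\mathscr B$ precisely because $(x_0,\xi_0)\notin\WF_{\FB}(f)$, and which makes $\mathscr F(\fy_1(\fy f-g))$ rapidly decreasing in a cone around $\xi_0$ (H{\"o}rmander's Lemma 8.1.1), so that $(x_0,\xi_0)\notin\WF(f-g)$ for the \emph{classical} wave-front set; and then, crucially, (b) one invokes H{\"o}rmander's result (the remark before Corollary 8.4.16 in \cite{Ho1}) that there exists $h\in C^\infty$ with $(x_0,\xi_0)\notin\WF_A(f-g-h)$, and since $C^\infty\subseteq\FB_{\loc}$ the corrected function $g+h$ is still an admissible subtraction. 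This $C^\infty$-to-analytic correction step is the essential missing idea in your proposal; without it no manipulation of $\FB$ wave-front sets can reach a conclusion about $\WF_A$.
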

For the proof we need the following Lemmas which are extensions of Proposition 1.5 and Lemma 1.6 in \cite{PTT2}.

\begin{lemma}\label{produkt}
Let $X\subseteq \rr d$ be open and $\mathscr B$ be a translation invariant BF-space. Then the map $(f_1, f_2)\mapsto f_1f_2$ from $\mathscr S (\rr d)\times \mathscr S (\rr d)$ to $\mathscr S (\rr d)$ extends uniquely to continuous mapping from $\FB (\rr d)\times \mathscr F L^1_{(v)}(\rr d)$ to $\FB(\rr d)$.
 \end{lemma}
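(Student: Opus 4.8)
The plan is to move everything to the Fourier side, where pointwise multiplication turns into convolution, and then to read off the result from the convolution estimate \eqref{propupps}. With the normalization used here one has, for $f_1,f_2\in \mathscr S(\rr d)$, the identity $\widehat{f_1f_2}=(2\pi)^{-d/2}\,\widehat{f_1}*\widehat{f_2}$, so that
$$
\nm{f_1f_2}{\FB}=\nm{\widehat{f_1f_2}}{\mathscr B}=(2\pi)^{-d/2}\nm{\widehat{f_1}*\widehat{f_2}}{\mathscr B}.
$$
Now $f_1\in \FB$ means exactly $\widehat{f_1}\in \mathscr B$, while $f_2\in \mathscr F L^1_{(v)}$ means $\widehat{f_2}\in L^1_{(v)}$; note also that any $f_2\in \mathscr S$ lies in $\mathscr F L^1_{(v)}$ since $\widehat{f_2}\,v$ is Schwartz times a polynomially moderate weight, hence integrable. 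I would then apply \eqref{propupps} with $\fy=\widehat{f_2}$ and $f=\widehat{f_1}$ to get
$$
\nm{\widehat{f_1}*\widehat{f_2}}{\mathscr B}\le C\nm{\widehat{f_2}}{L^1_{(v)}}\nm{\widehat{f_1}}{\mathscr B}=C\nm{f_2}{\mathscr F L^1_{(v)}}\nm{f_1}{\FB}.
$$
Combining the two displays yields the bilinear bound $\nm{f_1f_2}{\FB}\le C'\nm{f_1}{\FB}\nm{f_2}{\mathscr F L^1_{(v)}}$ on $\mathscr S(\rr d)\times \mathscr S(\rr d)$, which is the desired continuity.

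For the passage to general arguments I would \emph{not} attempt to extend by density of $\mathscr S$ in $\FB$, which need not hold, but instead define the product directly through the Fourier transform. Namely, for arbitrary $f_1\in \FB$ and $f_2\in \mathscr F L^1_{(v)}$ I declare $f_1f_2$ to be the element of $\mathscr S'(\rr d)$ whose Fourier transform is $(2\pi)^{-d/2}\,\widehat{f_1}*\widehat{f_2}$, where $*$ now denotes the continuous convolution map $\mathscr B\times L^1_{(v)}\to \mathscr B$ supplied by \eqref{propupps}. Since $\widehat{f_1}\in \mathscr B$ and $\widehat{f_2}\in L^1_{(v)}$, this convolution lies in $\mathscr B$, so $f_1f_2\in \FB$ and the estimate above persists verbatim. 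On $\mathscr S\times \mathscr S$ this definition coincides with ordinary multiplication, by the identity $\widehat{f_1f_2}=(2\pi)^{-d/2}\,\widehat{f_1}*\widehat{f_2}$, so it is genuinely an extension.

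For uniqueness I would transport the question back to the convolution map, whose unique extendability is part of the standing hypotheses on $\mathscr B$. Because $\mathscr F$ is a homeomorphism of $\mathscr S'(\rr d)$ restricting to homeomorphisms of $\mathscr S$, of $\mathscr B$ onto $\FB$, and of $L^1_{(v)}$ onto $\mathscr F L^1_{(v)}$, any continuous bilinear map $B:\FB\times \mathscr F L^1_{(v)}\to \FB$ agreeing with multiplication on $\mathscr S\times \mathscr S$ is the $\mathscr F^{-1}$-transport of a continuous bilinear map $\mathscr B\times L^1_{(v)}\to \mathscr B$ agreeing with $(2\pi)^{-d/2}$ times convolution on $\mathscr S\times \mathscr S$; by the assumed uniqueness of the extension of $*$ this transported map is forced to equal the convolution map, whence $B$ is uniquely determined. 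I expect this uniqueness step to be the only delicate point: since $\mathscr S$ is in general not dense in $\mathscr B$, one cannot argue by extension from a dense subspace in the first slot, and the argument must instead inherit uniqueness from \eqref{propupps}, using density of $\mathscr S$ in $L^1_{(v)}$ (hence in $\mathscr F L^1_{(v)}$) to control the second slot.
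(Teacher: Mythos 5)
Your proposal is correct and follows essentially the same route as the paper: the paper likewise passes to the Fourier side, bounds $\nm{\widehat{f_1}*\widehat{f_2}}{\mathscr B}\leq C\nm{\widehat{f_1}}{\mathscr B}\nm{\widehat{f_2}}{L^1_{(v)}}$ via Minkowski's inequality (i.e.\ the estimate \eqref{propupps}), and concludes by the density of $\mathscr S(\rr d)$ in $\mathscr F L^1_{(v)}(\rr d)$. Your explicit construction of the extension as the Fourier-conjugated convolution map, with uniqueness transported from the standing unique-extendability assumption on $*$ (rather than from any density of $\mathscr S$ in $\FB$, which indeed may fail), is a careful spelling-out of what the paper's terse proof leaves implicit.
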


\begin{proof}
$(1)$ Let $f_1\in \FB(\rr d)$ and $f_2 \in \mathscr S(\rr d)$. By Minkowski's inequality it follows that
\begin{equation*}
\nm {f_1f_2}{\FB} =\nm{\widehat {f_1}*\widehat {f_2}}{\mathscr B}\leq C\nm {\widehat {f_1}}{\mathscr B} \nm {\widehat {f_2}}{L^1_{(v)}}.
\end{equation*}
The assertion $(1)$ now follows from this estimate and the fact that $\mathscr S(\rr d)$ is dense in $\mathscr F L^1_{(v)}$.
\end{proof}

\begin{lemma}\label{utvavPTT2}
Let $X\subseteq \rr d$ be open, $f\in \mathscr D'(X)$ and let $\mathscr B$ be a translation invariant BF-space. Also let $(x_0,\xi_0)\in X\times \rr d\back 0$. Then the following conditions are equivalent:
\begin{enumerate}
\item $(x_0,\xi_0)\not\in \WF_{\FB} (f)$;

\item there exists $g\in \FB(\rr d)$ ($g\in \FB_{\loc}(X)$) such that  $(x_0,\xi_0)\not\in \WF(f-g)$;

\item there exists $g\in \FB(\rr d)$ ($g\in \FB_{\loc}(X)$) such that  $(x_0,\xi_0)\not\in \WF_A(f-g)$.
\end{enumerate}
\end{lemma}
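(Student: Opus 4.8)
The plan is to establish the cyclic chain of implications $(1)\Rightarrow(3)\Rightarrow(2)\Rightarrow(1)$. The implication $(3)\Rightarrow(2)$ is immediate, since the analytic wave-front set always contains the smooth one, so $\WF(f-g)\subseteq\WF_A(f-g)$ and the same $g$ witnesses $(2)$.

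For $(2)\Rightarrow(1)$ I would argue by splitting $f=(f-g)+g$ and localizing. Pick $\fy\in C^\infty_0$ equal to $1$ near $x_0$; then $\mathscr F(\fy f)=\mathscr F(\fy(f-g))+\mathscr F(\fy g)$. Since $(x_0,\xi_0)\notin\WF(f-g)$, one may choose $\fy$ and an open conic neighbourhood $\Gamma$ of $\xi_0$ so that $\mathscr F(\fy(f-g))$ is rapidly decreasing in $\Gamma$; such a function, restricted by $\chi_\Gamma$, is dominated by a Schwartz majorant and hence lies in $\mathscr B$ by axiom (3) of Definition \ref{BFspaces} together with $\mathscr S\subseteq\mathscr B$. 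For the second term, $g\in\FB$ gives $\widehat g\in\mathscr B$, and since $\widehat\fy\in\mathscr S\subseteq L^1_{(v)}$ the convolution estimate \eqref{propupps} (equivalently Lemma \ref{produkt}) yields $\mathscr F(\fy g)=(2\pi)^{-d/2}\widehat\fy*\widehat g\in\mathscr B$, so $\mathscr F(\fy g)\chi_\Gamma\in\mathscr B$ as well. Adding the two contributions gives $\nm{\mathscr F(\fy f)\chi_\Gamma}{\mathscr B}<\infty$, i.{\,}e. $(x_0,\xi_0)\notin\WF_{\FB}(f)$. The local version with $g\in\FB_{\loc}(X)$ is handled identically, replacing $g$ by $\fy g$.

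The substantial direction is $(1)\Rightarrow(3)$. Assuming $(x_0,\xi_0)\notin\WF_{\FB}(f)$, choose $\fy\in C^\infty_0$ with $\fy=1$ near $x_0$ and an open cone $\Gamma\ni\xi_0$ with $\widehat u\,\chi_\Gamma\in\mathscr B$, where $u=\fy f$. Take a smooth cut-off $\psi$, homogeneous of degree zero for large $\xi$, supported in $\Gamma$ and equal to $1$ on a narrower cone $\Gamma'\ni\xi_0$, and set $g=\mathscr F^{-1}(\psi\,\widehat u)$. Then $|\widehat g|=|\psi\,\widehat u|\le|\widehat u\,\chi_\Gamma|$, so $\widehat g\in\mathscr B$ by domination, that is $g\in\FB$. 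Writing $f-g=(1-\fy)f+(u-g)$, the first term vanishes near $x_0$ and so does not meet $\WF_A$ over $x_0$, while the remainder $w=u-g=\mathscr F^{-1}((1-\psi)\widehat u)$ satisfies $\widehat w=0$ on the cone $\Gamma'$. It remains to see that a tempered distribution whose Fourier transform vanishes in an open conic neighbourhood of $\xi_0$ is analytically regular at $(x_0,\xi_0)$, which then gives $(x_0,\xi_0)\notin\WF_A(f-g)$ and closes the cycle.

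I expect this last point to be the main obstacle. Localizing $w$ by an ordinary cut-off smears $\widehat w$ and destroys the support gap, so the smooth estimate (rapid decay of $\widehat{\chi w}$ in a subcone, obtained because $|\xi-\eta|\ge c(|\xi|+|\eta|)$ whenever $\xi\in\Gamma''\subset\subset\Gamma'$ and $\eta\notin\Gamma'$) is not enough; one must instead localize by a bounded sequence of Ehrenpreis--H\"ormander analytic cut-offs $\chi_N$ satisfying $|\widehat{\chi_N}(\zeta)|\le C(CN/|\zeta|)^N$ and optimize the resulting convolution bound over $N$ to obtain exponential decay $|\widehat{\chi_N w}(\xi)|\le Ce^{-c|\xi|}$ in a subcone about $\xi_0$. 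This is the analytic analogue of the smooth argument and is exactly the mechanism underlying the definition of $\WF_A$ in H\"ormander \cite[\S8.4]{Ho1}, so I would isolate it as the key technical step and invoke that machinery.
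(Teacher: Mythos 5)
Your proposal is correct, and two of its three implications coincide with the paper's argument: $(3)\Rightarrow(2)$ is the same triviality ($\WF\subseteq\WF_A$), and your $(2)\Rightarrow(1)$ is exactly the paper's step (split $f=(f-g)+g$, control $\mathscr F(\fy(f-g))\chi_\Gamma$ by solidity of $\mathscr B$, and control $\mathscr F(\fy g)\chi_\Gamma$ by Lemma \ref{produkt}). The genuine difference is in the hard direction. The paper never proves $(1)\Rightarrow(3)$ directly: it proves $(1)\Rightarrow(2)$ by taking the \emph{sharp} Fourier cutoff $\widehat g=\mathscr F(\fy_1 f)\chi_\Gamma$ and invoking H\"ormander's Lemma 8.1.1 to get rapid decay of $\mathscr F(\fy_1(\fy f-g))$ in a smaller cone, and then upgrades $(2)$ to $(3)$ by citing the remark before Corollary 8.4.16 in \cite{Ho1}: there is $h\in C^\infty(X)$ with $(x_0,\xi_0)\notin\WF_A(f-g-h)$, whence $g_1=g+h$ works --- at the price that $g_1$ is only in $\FB_{\loc}(X)$, since $h$ is merely smooth. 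You instead construct one global $g=\mathscr F^{-1}(\psi\,\widehat u)\in\FB(\rr d)$ with a \emph{smooth} conic cutoff $\psi$ and pass straight to $(3)$ via the fact that a temperate distribution whose Fourier transform vanishes in a conic neighbourhood of $\xi_0$ (outside a compact set) has no analytic wave-front set in that direction; this is precisely H\"ormander's Theorem 8.4.17 on the limit cone of $\supp\widehat w$ at infinity, which the paper itself invokes later (Section \ref{sec3}), or equivalently the analytic cutoff estimates you sketch. So you trade the smooth-correction result ($\WF$ versus $\WF_A$) for the limit-cone theorem; your route has the mild advantage of producing the global witness $g\in\FB(\rr d)$ for $(3)$ directly, while the paper's route keeps the analytic machinery entirely off the shelf and needs only the elementary Lemma 8.1.1 for the constructive step. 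One small imprecision to fix in your write-up: since $\psi$ must vanish near the origin to be smooth and supported in $\Gamma$, you only get $\widehat w=(1-\psi)\widehat u=0$ on $\Gamma'\setminus\{|\xi|\le R\}$ rather than on all of $\Gamma'$; this is harmless, because the compactly supported piece of $\widehat w$ contributes an entire function (so nothing to $\WF_A$), and the limit cone, like your exponential-decay estimates, is unaffected by compact sets.
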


\begin{proof}
In this proof we use the same ideas as in \cite[Proposition 8.2.6]{Hrm-nonlin} (see also \cite{PTT2} and \cite{PTT3}). We may assume that $g\in \FB_{\loc}(X)$ in $(2)$ and $(3)$ since the wave-front sets concern local properties. Assume that $(2)$ holds. We can then find an open subset $X_0$ of $X$ and some open cone $\Gamma=\Gamma_{\xi_0}$ and a sequence $\fy_N\in C^\infty_0$ such the $\fy_N(f-g)=f-g$ on $X_0$ and
\begin{equation}\label{uppskattning1}
|\mathscr F (\fy_N(f-g))(\xi)| \leq C_{N,\fy_N}\eabs{\xi}^{-N},\qquad N=1,2,\dots,\, \xi\in \Gamma.
\end{equation}
In particular it follows that if $N_0$ is chosen large enough, then $|\fy_N (f-g)|_{\FB(\Gamma)}$ is finite for every $N>N_0$. Since $g\in \FB(\rr d)$, it follows by Lemma \ref{produkt} that $|\fy_N g|_{\FB(\Gamma)}$ is finite for every $\fy_N$. Then $|\fy_N f|_{\FB(\Gamma)}$ is finite  for every $N>N_0$ and $(1)$ holds.

\par

Conversely, if $(x_0,\xi_0)\not\in \WF_{\FB}(f)$, then there exist an open neighbourhood $X_0$ of $x_0$ and an open conical neighbourhood $\Gamma$ of $\xi_0$  such that
$$|\fy f|_{\FB(\Gamma)}< \infty,$$
when $\fy \in C^\infty_0$, in view of Theorem 3.2 in \cite{CJT1}.

\par

Let $\fy_1,\fy\in C^\infty_0(X_0)$ be chosen such that $\fy(x_0)\neq 0$ and $\fy_1=1$ in the support of $\fy$. Furthermore let $\widehat g =\mathscr F (\fy_1f)$ in $\Gamma$ and otherwise $0$. Then $g\in \FB(\rr d)$.

\par

By \cite[Lemma 8.1.1]{Ho1} and its proof, it follows that
$$
|\mathscr F (\fy_1(\fy f-g))(\xi)|<C_N\eabs \xi^{-N},\qquad N=0,1,2\cdots,
$$
when $\xi\in \Gamma$ and $\Gamma$ is chosen sufficiently small. Since $\fy\fy_1=\fy$ we have that \eqref{uppskattning1} holds. This implies that $(x_0,\xi_0)\not\in \WF(f-g)$. This proves that $(1)$ and $(2)$ are equivalent.

\par

Since $\WF(f) \subseteq \WF_A(f)$ for each distribution $f$, it follows that $(2)$ holds if $(3)$ is fulfilled. Assume that $(2)$ holds. Then in view of of the remark before Corollary 8.4.16 in \cite{Ho1} there exists some $h\in C^\infty (X)$ such that $(x_0,\xi_0)\not\in \WF_A(f-g-h)$. Since $C^\infty \subseteq \FB_{\loc}(X)$ it follows that $g_1=g + h \in \FB_{\loc}(X)$. Hence $(3)$ holds, and the result follows.
\end{proof}

\begin{proof}[Proof of Proposition \ref{FB och analytisk}]
We start by showing that
\begin{equation*}
\WF_{\FB}(f)\subseteq \WF_A(f-g),
\end{equation*}
for every $g\in \FB$. Since $\WF_{\FB}(f-g)\subseteq \WF_A(f-g)$ it is sufficient to show that
\begin{equation}\label{steg1}
\WF_{\FB}(f)\subseteq \WF_{\FB} (f-g),
\end{equation}
for every $g\in \FB$.

\par

Assume that $(x_0,\xi_0)\not\in \WF_{\FB}(f-g)$. Then there exist $\fy_{x_0}\in C^\infty_0$ with $\fy_{x_0}(x_0)\neq 0$ and an open conical neighbourhood $\Gamma=\Gamma_{\xi_0}$ of $\xi_0$ such that
\begin{equation*}
\nm{\mathscr F(\fy_{x_0} (f-g))\chi_{\Gamma_{\xi_0}}}{\mathscr B} <\infty.
\end{equation*}
It follows by Lemma \ref{produkt} that
$$
\nm{\mathscr F(\fy_{x_0} g)\chi_{\Gamma_{\xi_0}}}{\mathscr B} <\infty
$$
for every $g\in \FB$ and then
$$
\nm{\mathscr F(\fy_{x_0} f)\chi_{\Gamma_{\xi_0}}}{\mathscr B}=\nm{\mathscr F(\fy_{x_0} (f-g))\chi_{\Gamma_{\xi_0}}}{\mathscr B} +\nm{\mathscr F(\fy_{x_0} g)\chi_{\Gamma_{\xi_0}}}{\mathscr B} <\infty.
$$
This shows that \eqref{steg1} holds. In fact, by similar calculations we can show the opposite inclusion and thereby obtain equality in \eqref{steg1}.
We have now shown that
$$
\WF_{\FB}(f) \subseteq \bigcap_{g\in \FB} \WF_A (f-g).
$$
We obtain the opposite inclusion by using Proposition \ref{utvavPTT2}. This completes the proof.
\end{proof}

\begin{cor}
If $f\in \mathscr D'(X)$ where $X$ is an interval on $\rr{}$ and if $x_0\in X$ is a boundary point of $\supp f$, then $(x_0,\pm 1)\in \WF_{\FB}(f)$.
\end{cor}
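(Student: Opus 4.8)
The plan is to reduce at once to dimension $d=1$, where $S^{d-1}=S^{0}=\{+1,-1\}$, so that the only covectors lying over $x_0$ are $\pm 1$ and the assertion is that both belong to the wave-front set. Since $\supp f$ is closed in the interval $X$, a boundary point $x_0$ possesses, on at least one side, an interval free of $\supp f$; after the reflection $x\mapsto 2x_0-x$ if necessary, I may assume that $f\equiv 0$ on $(x_0,x_0+\delta)$ for some $\delta>0$ while $x_0\in\supp f$. I would then pass to the associated analytic function $F=K*f$ of Theorem \ref{8.4.11}, which is genuinely holomorphic in $\Omega=\{\,|\im z|<1\,\}$, and recall that for $d=1$ the identity \eqref{rel2fF} reads $f=F(\cdot+i)+F(\cdot-i)$, so that the two boundary slices $F(\cdot\mp i)$ isolate the microlocal content in the directions $\pm 1$.

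I would treat the two directions by contraposition, one at a time. Suppose that $-1$ is a direction of regularity at $x_0$; the point I would exploit is that regularity in a single direction is equivalent to $f$ being, in a neighbourhood of $x_0$, the distributional boundary value of a function $\Phi$ holomorphic on a one-sided half-disc $\{\,|\re z-x_0|<r,\ 0<\im z<r\,\}$ that sits over the entire two-sided interval $(x_0-r,x_0+r)$. Combining this with the hypothesis that $f$ vanishes on $(x_0,x_0+\delta)$, the boundary value of $\Phi$ vanishes on the open subinterval $(x_0,x_0+\min(\delta,r))$.

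The heart of the argument, and the step I expect to be the main obstacle, is a unique-continuation statement of Painlev\'e--Schwarz type: a function holomorphic on the half-disc whose distributional boundary value vanishes on a subinterval of the bounding edge extends holomorphically across that edge and, being zero there, must vanish on its whole connected domain. Feeding $\Phi\equiv 0$ back through $f=F(\cdot+i)+F(\cdot-i)$ would force $f$ to vanish throughout $(x_0-r,x_0+r)$, contradicting $x_0\in\supp f$; hence $-1$ cannot be a direction of regularity, and the symmetric argument on the opposite half-disc (or after the normalising reflection) rules out $+1$ as well. The delicate part is to phrase this boundary-value reflection rigorously within the present framework, where it is the true analyticity of $F$ in $\Omega$, and not merely a softer $\FB$-type regularity, that powers the unique continuation; this is exactly why the statement is naturally read through the analytic wave-front set, to which $\WF_{\FB}$ is tied by the remark $\WF_{\FB}(f)\subseteq\WF_A(f)$ and by Proposition \ref{FB och analytisk}.
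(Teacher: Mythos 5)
Your proposal follows the same route as the paper's own proof (which is H\"ormander's argument for his Corollary 8.4.16 transplanted to the $\FB$ setting): argue by contraposition, represent $f$ near $x_0$ as the distributional boundary value of a function holomorphic in a one-sided half-disc, then use Painlev\'e--Schwarz reflection across the interval where $f$ vanishes together with unique continuation. The genuine gap is exactly the step you single out as the heart of the matter: the claimed equivalence between ``$(x_0,-1)\notin\WF_{\FB}(f)$'' and ``$f$ is, near $x_0$, the distributional boundary value of a function $\Phi$ holomorphic on the upper half-disc''. For $\FB$-type wave-front sets this fails in the direction you need. What the theory actually provides (Theorem \ref{8.4.15'}, or Lemma \ref{utvavPTT2}) is a representation \emph{modulo} $\FB$: near $x_0$ one has $f=\Phi_0+g$, where $\Phi_0$ is the boundary value of a holomorphic $\Phi$ and $g\in\FB$ locally. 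In the analytic setting the error term is real-analytic, hence extends holomorphically to a complex neighbourhood of the interval and can be absorbed into $\Phi$; that absorption is precisely what makes the argument work for $\WF_A$. In the $\FB$ setting no absorption is possible, because $\FB$ is not quasi-analytic: after reflection you only learn that $\Phi_0=-g$ on the interval where $f=0$, i.e.\ that $\Phi_0$ is \emph{some} $\FB$ function there, not that it vanishes. The unique-continuation step then gives nothing, and the whole argument degrades to ``$f$ agrees with an $\FB$ function near $x_0$'', which does not contradict $x_0\in\supp f$.

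Moreover, this is not a repairable technicality, since the statement itself fails as formulated: any nonzero $\fy\in C_0^\infty(\rr{})$ has $\WF_{\FB}(\fy)=\emptyset$, because every localization of $\fy$ is a Schwartz function whose Fourier transform lies in $\mathscr B$ (alternatively, Proposition \ref{FB och analytisk} gives $\WF_{\FB}(\fy)\subseteq\WF_A(\fy-\fy)=\emptyset$), and yet $\supp\fy$ has boundary points. The conclusion $(x_0,\pm 1)\in\WF_{\FB}(f)$ is thus generally false for compactly supported $\FB$-regular $f$; the corollary is correct only with $\WF_{\FB}$ replaced by $\WF_A$, which is the only version your reflection argument can prove. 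You are in good company --- the paper's own proof makes the same unjustified leap, asserting that non-membership in $\WF_{\FB}$ produces an $F$ analytic in the half-disc ``with boundary value $f$'' --- but as a blind proof of the stated $\FB$-claim, the asserted equivalence is where your argument breaks.
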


\begin{proof}
Assume for example that $(x_0,-1)\not\in \WF_{\FB}(f).$ Then we can find $F$ analytic in $\Omega=\{z;\, \im z>0,\, |z-x_0|<r\}$  with boundary value $f$. There is an interval $I\subseteq (x_0-r,x_0+r)$ where $f=0$. By Theorem 3.1.12 and 4.4.1 in Hörmander \cite{Ho1} $F$ can be extended analytically across $I$ so that $F=0$ below $I$. Thus the uniqueness of analytic continuation gives  $F=0$, hence $f=0$ in $(x_0-r,x_0+r)$. This contradicts that $x_0$ is a boundary point of $\supp f$ and proves the corollary.
\end{proof}

The result in the following lemma follows directly from Lemma 8.4.17 in Hörmander.
\begin{lemma} If $f\in \mathscr S'$ then $\WF_{\FB}(f)\subseteq \rr d \times F$ where $F$ is the limit cone of $\supp \widehat f $ at infinity, consisting of all limits of sequences $t_jx_j$ with $x_j\in \supp \widehat f$ and $0<t_j \to 0$.
\end{lemma}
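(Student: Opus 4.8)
The plan is to prove the inclusion through its frequency component: I will show that if $\xi_0\in\rr d\back 0$ does not lie in the limit cone $F$, then $(x_0,\xi_0)\notin\WF_{\FB}(f)$ for \emph{every} $x_0\in\rr d$, which is exactly the asserted statement. By conicity of both $F$ and the wave-front set we may normalize $|\xi_0|=1$. The first task is to translate $\xi_0\notin F$ into a support condition on $\widehat f$. Since $F$ is the set of all limits $t_jx_j$ with $x_j\in\supp\widehat f$ and $t_j\to 0^+$, its complement is open and conical, so $\xi_0\notin F$ furnishes an open conical neighbourhood $\Gamma$ of $\xi_0$ and a radius $R>0$ with $\supp\widehat f\cap\Gamma\cap\{|\xi|>R\}=\emptyset$; equivalently $\widehat f$ vanishes on $\Gamma\cap\{|\xi|>R\}$.

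Next I would fix any $x_0$ and choose $\fy\in C^\infty_0$ with $\fy\equiv 1$ near $x_0$, so that $\fy f\in\mathscr E'$ and $\mathscr F(\fy f)=(2\pi)^{-d/2}\,\widehat\fy*\widehat f$ is smooth and polynomially bounded. The key step is to establish that this convolution is \emph{rapidly decreasing} in a slightly smaller open conical neighbourhood $\Gamma'\subset\Gamma$ of $\xi_0$, i.e.\ that for every $N$ there is $C_N$ with $|\mathscr F(\fy f)(\xi)|\le C_N\eabs{\xi}^{-N}$ for $\xi\in\Gamma'$. This is the classical localization estimate underlying H\"ormander's Lemma 8.1.1: for $\xi\in\Gamma'$ with $|\xi|$ large and $\eta\in\supp\widehat f$, either $|\eta|\le R$, in which case $|\xi-\eta|\gtrsim|\xi|$, or $|\eta|>R$ forces $\eta\notin\Gamma$, so the angle between $\xi$ and $\eta$ is bounded away from zero and $|\xi-\eta|\gtrsim|\xi|+|\eta|$; in either case the Schwartz factor $\widehat\fy(\xi-\eta)$ is rapidly small and controls the tempered distribution $\widehat f$, yielding the stated decay.

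It then remains to convert rapid decay into membership in $\mathscr B$. The function $\mathscr F(\fy f)\,\chi_{\Gamma'}$ is bounded in modulus by a function of super-polynomial decay, hence by a Schwartz function, and since $\mathscr S\subseteq\mathscr B$ the solidity property (3) of Definition \ref{BFspaces} gives $\mathscr F(\fy f)\chi_{\Gamma'}\in\mathscr B$. Thus $|\fy f|_{\FB(\Gamma')}=\nm{\mathscr F(\fy f)\chi_{\Gamma'}}{\mathscr B}<\infty$, so $(x_0,\xi_0)\notin\WF_{\FB}(f)$. As $x_0$ was arbitrary and $\xi_0$ was an arbitrary direction outside $F$, this gives $\WF_{\FB}(f)\subseteq\rr d\times F$.

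I expect the only genuine obstacle to be the last step, the passage from rapid decay to the $\mathscr B$-bound, because the cutoff $\chi_{\Gamma'}$ destroys smoothness; this is precisely where the ideal (solidity) axiom of a BF-space is indispensable, and it is also the exact content of the inclusion $\WF_{\FB}(f)\subseteq\WF(f)$ already observed in the paper. In fact, once that inclusion is taken for granted the statement follows in a single line by combining it with H\"ormander's Lemma 8.4.17, which gives $\WF(f)\subseteq\rr d\times F$ for the classical wave-front set; the argument sketched above merely makes the underlying mechanism explicit.
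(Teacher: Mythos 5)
Your proof is correct, and your closing paragraph is in fact the paper's entire proof: the paper gives no argument beyond remarking that the lemma ``follows directly from Lemma 8.4.17 in H\"ormander'', i.e.\ it combines the already-observed inclusion $\WF_{\FB}(f)\subseteq \WF_A(f)$ with H\"ormander's $\WF_A(f)\subseteq \rr d\times F$. (Note that Lemma 8.4.17 is stated for the \emph{analytic} wave-front set, so the precise one-line chain is $\WF_{\FB}(f)\subseteq \WF(f)\subseteq \WF_A(f)\subseteq \rr d\times F$; your attribution of the $C^\infty$ statement to 8.4.17 is a harmless slip.) The body of your proposal is genuinely different: a self-contained argument that converts $\xi_0\notin F$ into the support condition $\supp \widehat f\cap\Gamma\cap\{|\xi|>R\}=\emptyset$, runs the Lemma 8.1.1-type convolution estimate to obtain rapid decay of $\mathscr F(\fy f)$ on a smaller cone $\Gamma'$, and then uses solidity together with $\mathscr S\subseteq\mathscr B$ to conclude $\nm{\mathscr F(\fy f)\chi_{\Gamma'}}{\mathscr B}<\infty$. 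What this buys is independence from the analytic machinery of Section 8.4 and an explicit display of where the BF-space axioms enter, namely exactly the solidity step that also underlies $\WF_{\FB}\subseteq\WF$; what the paper's route buys is brevity, since both ingredients are already on record. If you write your version out in full, two routine points each need a line: first, $\widehat\fy *\widehat f$ is a distributional pairing, so before applying the $\mathscr S'$-seminorm estimates you should insert a cutoff equal to $1$ near $\supp\widehat f$ and vanishing on an intermediate cone beyond radius $2R$, so that the geometric bounds $|\xi-\eta|\gtrsim |\xi|+|\eta|$ hold on the support of the cutoff; second, the domination of a rapidly decreasing measurable function by a Schwartz function deserves justification (smooth the radial non-increasing majorant), since a polynomially decaying function such as $\eabs{\xi}^{-N}$ is not dominated by any Schwartz function and its membership in a general $\mathscr B$ is not immediate from the axioms.
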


\par

Next we give some computational rules for wave-front sets of Fourier Banach function types. We prove that some of  the rules that Hörmander obtained for classical and analytical wave-front sets in \cite{Ho1} holds also for wave-front sets of Fourier Banach function types. For completeness we give the proofs which are similar to those of the analogous results in \cite{Ho1}.

\begin{thm}
Let $X\subseteq \rr{d_1}$ and $Y\subseteq \rr{d_2}$ be open. Also let $f:X\to Y$ be a real analytic map with normal set $N_f$. Then
\begin{equation}\label{linjavb}
\WF_{\FB} (f^*g)\subseteq f^*\WF_{\FB}(g),\quad \text{if} \quad g\in\mathscr D'(Y),\quad N_f\cap\WF_{\FB}(g)=\emptyset
\end{equation}
\end{thm}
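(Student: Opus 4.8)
The plan is to reduce the statement to the corresponding inclusion for the \emph{classical} wave-front set, where H\"ormander's pullback theorem (Theorem 8.2.4 in \cite{Ho1}) is available, and then to transfer the conclusion back to $\WF_{\FB}$ by means of Lemma \ref{utvavPTT2}. I argue by contraposition and work locally near a fixed $x_0$: assuming (as usual) that $f^*g$ is defined, I fix $(x_0,\xi_0)\notin f^*\WF_{\FB}(g)$ and aim to show $(x_0,\xi_0)\notin \WF_{\FB}(f^*g)$. By the definition of the pullback this hypothesis says that $(f(x_0),\eta)\notin \WF_{\FB}(g)$ for every $\eta$ with $\ltrans{f'(x_0)}\eta=\xi_0$, while $N_f\cap \WF_{\FB}(g)=\emptyset$ says that $(f(x_0),\eta)\notin \WF_{\FB}(g)$ for every $0\neq\eta\in\ker \ltrans{f'(x_0)}$.

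First I would isolate a single compact set of directions that must be regularized. Since wave-front sets are conic, consider on the sphere
$$
\Theta=\overline{\{\eta/|\eta|:\ \ltrans{f'(x_0)}\eta=\xi_0,\ \eta\neq0\}}\ \cup\ \big(\ker \ltrans{f'(x_0)}\cap S^{d_2-1}\big).
$$
As $|\eta|\to\infty$ along the affine space $\{\ltrans{f'(x_0)}\eta=\xi_0\}$ one has $\ltrans{f'(x_0)}(\eta/|\eta|)\to0$, so the limit directions lie in $\ker \ltrans{f'(x_0)}$; hence $\Theta$ is compact, and by the two observations above $(f(x_0),\theta)\notin \WF_{\FB}(g)$ for every $\theta\in\Theta$. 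Using the Fourier-cutoff construction from the proof of Lemma \ref{utvavPTT2} over a closed conic neighbourhood $\Gamma$ of $\Theta$ (finitely many cones suffice by compactness), I would produce a single $w\in \FB_{\loc}(Y)$ such that $G:=g-w$ is microlocally smooth in $\Gamma$ over a neighbourhood of $f(x_0)$; in particular $\WF(G)$ avoids $\Theta$ there, so $N_f\cap \WF(G)=\emptyset$ near $f(x_0)$ and $(x_0,\xi_0)\notin f^*\WF(G)$.

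Then H\"ormander's pullback theorem gives $\WF(f^*G)\subseteq f^*\WF(G)$, whence
$$
(x_0,\xi_0)\notin \WF(f^*G)=\WF(f^*g-f^*w),
$$
the last distribution being well defined since $f^*g$ and $f^*G$ are. If $f^*w\in \FB_{\loc}(X)$ in a neighbourhood of $x_0$, then the implication $(2)\Rightarrow(1)$ of Lemma \ref{utvavPTT2} yields $(x_0,\xi_0)\notin \WF_{\FB}(f^*g)$, which is exactly \eqref{linjavb}.

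The hard part is precisely that last implication: showing that the regularizer pulls back into the Fourier BF-space, i.e. that $w\in \FB_{\loc}(Y)$ forces $f^*w\in \FB_{\loc}(X)$. Adjusting $w$ by a $C^\infty$ term is of no help, since $C^\infty\subseteq \FB_{\loc}$ leaves $\WF_{\FB}$ unchanged, so $w$ must genuinely carry the Fourier-BF regularity and the argument cannot be purely soft. I would establish the preservation through the graph factorization $f=\pi\circ\iota$ with $\iota(x)=(x,f(x))$, reducing to the trivial tensoring $\pi^*w=1\otimes w$ and to restriction to the real-analytic graph; one then controls $\mathscr F(\fy\,f^*w)$ in conic regions by a change of variables together with the convolution and module estimate \eqref{propupps} for $\mathscr B$ (cf.\ also Lemma \ref{produkt}), the normal-set condition guaranteeing that the relevant phase has no stationary points in the critical directions so that the estimates close. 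This mapping property of the pullback on $\FB_{\loc}$ is the main obstacle, and the only place where the BF-space structure, rather than the microlocal transfer machinery of Lemma \ref{utvavPTT2}, is really used.
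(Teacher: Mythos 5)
Your plan takes a genuinely different route from the paper: the paper follows H\"ormander's proof of Theorem 8.5.1, using Corollary \ref{8.4.13'} and Theorem \ref{8.4.15'} to write $g$ near $f(x_0)$ as a finite sum of boundary values of functions analytic in tubes over cones $\Gamma_j$ (with $\Gamma_j^\circ$ small, meeting $\WF_{\FB}(g)|_{f(x_0)}$ and hence avoiding $\ker \ltrans{f'}(x_0)$ by the normal-set hypothesis) plus an $\FB$-remainder; the analytic pieces are pulled back by composition with the holomorphic extension of $f$, and the $\WF_{\FB}$ of such pullbacks is controlled through Theorem \ref{8.4.8} and Lemma \ref{andra riktningen}. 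You instead subtract an $\FB_{\loc}$ regularizer $w$, apply the $C^\infty$ pullback theorem to $G=g-w$, and transfer back via Lemma \ref{utvavPTT2}. The first half of your argument (compactness of $\Theta$, the finite Fourier-cutoff construction of $w$, and the application of H\"ormander's Theorem 8.2.4 to $G$ locally) is sound.

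The gap is the step you flag yourself, and it is fatal rather than technical: the implication $w\in \FB_{\loc}(Y)\Rightarrow f^*w\in \FB_{\loc}(X)$ is false for general real analytic $f$. The hypothesis $N_f\cap\WF_{\FB}(g)=\emptyset$ does not prevent $f$ from being an embedding, in which case $f^*$ is a restriction operator; and membership in $\FB_{\loc}$ gives no control whatsoever on the classical wave-front set, so $f^*w$ need not even exist as a distribution. Concretely, with $\mathscr B=L^\infty(\rr 2)$ one has $\delta_0\in\FB$ and indeed $\WF_{\FB}(\delta_0)=\emptyset$, yet the pullback of $\delta_0$ under $f(x)=(x,0)$ is undefined. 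Worse, your construction makes things as bad as possible: $\hat w=\mathscr F(\fy_1 g)\chi_{\Gamma}$ with $\Gamma$ a conic neighbourhood of $\Theta\supseteq \ker\ltrans{f'}(x_0)\cap S^{d_2-1}$, so the Fourier mass of $w$ sits exactly around the conormal directions of $f$. In the model $f(x)=(x,0)$ one gets
\begin{equation*}
\mathscr F(f^*w)(\xi_1)=c\int \hat w(\xi_1,\xi_2)\, d\xi_2 ,
\end{equation*}
an integral over the region $\{|\xi_2|\gtrsim |\xi_1|\}$ in which you hold only a $\mathscr B$-norm bound and no decay, so it diverges in general. The tools you invoke to close the estimate --- \eqref{propupps} and Lemma \ref{produkt} --- are translation-invariant convolution and multiplication estimates and give no purchase on a change of variables; moreover, since $f^*w$ is only defined as the difference $f^*g-f^*G$, there is no formula for $\mathscr F(\fy\, f^*w)$ to estimate in the first place. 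This is exactly the difficulty the paper's route sidesteps (or at least relocates): there the singular directions are carried by analytic boundary values, which pull back by composition, and only an $\FB$-remainder is left over --- though, to be fair, the paper is itself rather brisk about why that remainder's pullback is harmless.
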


\begin{proof}
Assume that there exists an analytic function $\Phi$ in
$$
\Omega=\{y' +iy'';\, y'\in Y,\, y''\in \Gamma,\, |y''|<\gamma\},
$$
where $\Gamma$ is an open convex cone, such that
$$
|\Phi(y'+iy'')|\leq C|y''|^{-N} \qquad \text{in}\; \Omega,\qquad g=\lim_{\Gamma\ni y\to 0} \Phi(\cdot+iy).
$$
Let $\Gamma^\circ$ be the dual $\Gamma$. Then by the arguments in the proof of Theorem 8.5.1 in Hörmander \cite{Ho1} it follows that $\WF_A(g)\subseteq Y \times \Gamma^\circ$. If we assume that $x_0\in X$ and that $\ltrans f'(x_0)\eta\neq 0$, $\eta \in \Gamma^\circ\back 0$, then $\ltrans f'(x_0)\Gamma^\circ$ is a closed, convex cone and
$$
\WF_A(f^*g)|_{x_0}\subseteq \{(x_0,\ltrans f'(x_0)\eta);\, \eta\in \Gamma^\circ\back 0\}.
$$
Next by using Corollary \ref{8.4.13'} and Theorem \ref{8.4.15'} it follows that any distribution $g$ can be written as a finite sum $\sum g_j$ where each term either belongs to $\FB$ in a neighbourhood of $f(x_0)$ of satisfies the hypotheses above with some $\Gamma_j$ such that $\Gamma^\circ_j$ is small and intersects $\WF_{\FB}(g)|_{f(x_0)}$. By the hypotheses $\ltrans f'(x_0)\eta\neq 0$ when $(f(x_0),\eta) \in \WF_{\FB}(f)$. We then conclude that
$$
\WF_{\FB}(f^*g)|_{x_0}\subseteq \{(x_0, \ltrans f'(x_0)\eta),\, \eta\in \bigcup \Gamma_j^\circ\}.
$$
This implies \eqref{linjavb}.
\end{proof}

\begin{thm}\label{split}
Let $f\in \mathscr E'(\rr d)$. Split the coordinates in $\rr d$ into two groups $x'=(x_1,\dots,x_{d_1})$ and $x''=(x_{d_1+1},\dots,x_d)$, and set
$$
f_1(x')=\int f(x',x'')\, dx''
$$
Then
$$
\WF_{\FB} (f_1)\subseteq \{(x',\xi');\, (x',x'',\xi',0)\in \WF_{\FB}(f) \qquad \text{for some} \, x''\}.
$$
\end{thm}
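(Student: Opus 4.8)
The plan is to reduce the statement to a Fourier-side estimate and to exploit the identity relating the Fourier transform of the marginal $f_1$ to the restriction of $\widehat f$ to the hyperplane $\xi''=0$. Writing $\xi=(\xi',\xi'')\in\rr{d_1}\times\rr{d_2}$, a direct computation gives $\widehat{f_1}(\xi')=c\,\widehat f(\xi',0)$ with $c=(2\pi)^{d_2/2}$. It therefore suffices to fix $(x_0',\xi_0')$ in the complement of the right-hand set, that is a point with $(x_0',x'',\xi_0',0)\notin\WF_{\FB}(f)$ for every $x''$, and to prove that $(x_0',\xi_0')\notin\WF_{\FB}(f_1)$.

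First I would localise. Since $f\in\mathscr E'(\rr d)$, the slice $S=\{x'';\,(x_0',x'')\in\supp f\}$ is compact, and for $x''\notin S$ the point $(x_0',x'',\xi_0',0)$ lies off $\supp f$ and hence off $\WF_{\FB}(f)$ automatically. For each $x''\in S$ the hypothesis yields $\varphi_{x''}\in C_0^\infty(\rr d)$ equal to $1$ near $(x_0',x'')$ and an open conical neighbourhood $\Gamma_{x''}$ of $(\xi_0',0)$ with $\nm{\mathscr F(\varphi_{x''}f)\chi_{\Gamma_{x''}}}{\mathscr B}<\infty$. By compactness of $\{x_0'\}\times S$ and a tube argument I extract finitely many such data; intersecting the cones to a common $\Gamma$ and using a partition of unity subordinate to the cover I produce cut-offs $\chi(x')=1$ near $x_0'$ and $\psi(x'')=1$ near $S$ such that, with $G=(\chi\otimes\psi)f$, one has $\nm{\widehat G\,\chi_\Gamma}{\mathscr B}<\infty$. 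The passage from $\varphi_{x''}f$ to the pieces of $G$ uses that multiplication by a $C_0^\infty$ function acts on the Fourier side as convolution by a Schwartz function and so preserves conical $\FB$-control up to an arbitrarily small shrinking of the cone (the argument of Lemma \ref{produkt}). Shrinking $\chi$ so that $\psi=1$ on the $x''$-support of $\chi f$, I obtain the pointwise identity $\chi f_1=\int G(\cdot,x'')\,dx''$, whence $\widehat{\chi f_1}(\xi')=c\,\widehat G(\xi',0)$.

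The heart of the matter is to deduce from $\widehat G\,\chi_\Gamma\in\mathscr B(\rr d)$ that the slice $\xi'\mapsto\widehat G(\xi',0)$, cut to a conical neighbourhood $\Gamma'$ of $\xi_0'$, lies in the induced BF-space $\mathscr B_1$ on $\rr{d_1}$ (defined, as in the modulation-space section, by $\nm h{\mathscr B_1}=\nm{h\otimes\gamma}{\mathscr B}$ for a fixed $\gamma\in\mathscr S(\rr{d_2})\back0$). This is a trace-type assertion with no pointwise analogue in the $\FB$ setting, so the compact $x''$-support of $G$ is essential: if $\supp\psi\subseteq\{|x''|\le R\}$ and $\Psi\in\mathscr S(\rr{d_2})$ satisfies $\widehat\Psi=1$ on $\{|x''|\le R\}$, then the partial inverse transform of $\widehat G(\xi',\cdot)$ is $x''$-supported in $\{|x''|\le R\}$, which gives the reproducing formula $\widehat G(\xi',0)=\int\widehat G(\xi',\eta'')\Phi(\eta'')\,d\eta''$ with $\Phi(\eta'')=\Psi(-\eta'')$. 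I would then choose $\Gamma'$ and $\delta>0$ so that $\{(\xi',\eta'');\,\xi'\in\Gamma',\,|\eta''|\le\delta|\xi'|\}\subseteq\Gamma$ and split the $\eta''$-integral accordingly. On $|\eta''|\le\delta|\xi'|$ the integrand equals $(\widehat G\chi_\Gamma)(\xi',\eta'')\Phi(\eta'')$ and is controlled by $\mathscr B$; on $|\eta''|>\delta|\xi'|$ the Paley--Wiener bound $|\widehat G(\xi)|\le C\eabs\xi^{M}$ against the rapid decay of $\Phi$ makes the contribution rapidly decreasing in $\xi'$, hence dominated by a Schwartz function and so, by property $(3)$ of Definition \ref{BFspaces}, an element of $\mathscr B_1$ after cutting to $\Gamma'$.

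The main obstacle is the first of these two pieces: bounding the $\mathscr B_1(\rr{d_1})$-norm of $\xi'\mapsto\int(\widehat G\chi_\Gamma)(\xi',\eta'')\Phi(\eta'')\,d\eta''$ by a constant times $\nm{\widehat G\chi_\Gamma}{\mathscr B}$. This is precisely the statement that integrating out the second group of frequencies against a fixed Schwartz weight maps $\mathscr B(\rr d)$ continuously into $\mathscr B_1(\rr{d_1})$, and proving it for a general translation invariant BF-space requires a Minkowski-type inequality adapted to $\mathscr B$ together with the independence of $\mathscr B_1$ from the auxiliary window $\gamma$; for the mixed and weighted Lebesgue spaces that motivate the theorem it follows from Fubini and Hölder, but the abstract case is where the real work lies. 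Granting this mapping property, the two estimates combine to give $\nm{\widehat{\chi f_1}\,\chi_{\Gamma'}}{\mathscr B_1}<\infty$, i.e. $(x_0',\xi_0')\notin\WF_{\FB}(f_1)$, which is the asserted inclusion.
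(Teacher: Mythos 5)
Your route is genuinely different from the paper's. The paper never works with the Fourier-side restriction to $\xi''=0$ explicitly: it represents $f$ by the analytic function $U=K*f$ of Theorem \ref{8.4.11}, integrates out $x''$ on the \emph{physical} side, where $U$ decays exponentially, to get $U_1(z')=\int U(z',x'')\,dx''$, analytic in $|\im z'|<1$ with a bound of the form \eqref{rel1fF}; it then uses Theorem \ref{huvudsats} to convert the hypothesis $(x',x'',\omega_0',0)\notin\WF_{\FB}(f)$ into $\FB$-membership of $U$ at the boundary points $(x',x'')-i(\omega_0',0)$, deduces that $U_1\in\FB$ at $x'-i\omega_0'$, and closes with Lemma \ref{andra riktningen} applied to $f_1=\int_{|\omega|=1}U_1(\cdot+i\omega')\,d\omega$. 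By contrast, you reduce everything to the identity $\widehat{f_1}(\xi')=c\,\widehat f(\xi',0)$ and a conical trace estimate. Your localization step, the reproducing formula coming from the compact $x''$-support, and the treatment of the region $|\eta''|>\delta|\xi'|$ (Paley--Wiener decay against the rapid decay of $\Phi$, then solidity, i.e.\ property (3) of Definition \ref{BFspaces}) are all sound.

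However, your proof has a genuine gap, and you name it yourself: the central estimate
$$
\Big\Vert \int(\widehat G\chi_\Gamma)(\cdot,\eta'')\Phi(\eta'')\,d\eta''\Big\Vert_{\mathscr B_1}\le C\,\nm{\widehat G\chi_\Gamma}{\mathscr B}
$$
is \emph{assumed} (``granting this mapping property''), not proved, and in the abstract setting it is precisely where the whole difficulty of the theorem sits. A general translation invariant BF-space on $\rr d$ carries no Fubini-type structure adapted to the splitting $\rr{d_1}\times\rr{d-d_1}$: its elements are only defined almost everywhere, so slice norms $\nm{h(\cdot,\eta'')}{\mathscr B_1}$ are neither well defined nor controlled by $\nm{h}{\mathscr B}$, and the axioms (1)--(3) of Definition \ref{BFspaces} together with the assumed convolution property \eqref{propupps} say nothing about integrating out only some of the frequency variables (the trace map is not a convolution on $\rr d$, so \eqref{propupps} cannot be invoked). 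Even the definition of the target space $\mathscr B_1$ via a window, and its independence of the window, requires the machinery of \cite{CJT1}. So as written your argument proves the theorem for mixed and weighted Lebesgue spaces, but not for the class of spaces in the statement; to complete it you must either supply this trace lemma or re-route through $U=K*f$, Theorem \ref{huvudsats} and Lemma \ref{andra riktningen} as the paper does. (To be fair, the paper's own one-line claim that $U_1\in\FB$ at $x'-i\omega_0'$ conceals a closely related slicing issue; but that reduction, unlike yours, at least rests only on statements already established earlier in the paper.)
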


\begin{proof}
We use the same notation as in Theorem \ref{huvudsats}. Then
\begin{equation}
\scal f {\phi\otimes \psi} = \int_{|\omega|=1}\scal{U(\cdot+i\omega)}{\phi\otimes \psi}\,d\omega,
\end{equation}
for $\phi\in C^\infty_0(\rr {d_1})$ and $\psi\in C^\infty_0 (\rr {d-d_1})$. Take $\psi(x'')=\chi(\delta x'')$ where $\chi=1$ in the unit ball and let $\delta\to 0$. $U$ is decreasing exponentially at infinity and therefore it follows that
$$
\scal {f_1} \phi =\int_{|\omega|=1}\scal{U(\cdot+i\omega)}{\phi\otimes 1}\,d\omega= \int_{|\omega|=1}\scal{U_1(\cdot+i\omega')}{\phi}\,d\omega
$$
where
$$
U_1(z')=\int U(z',x'')\, dx''=\int U(z',x''+iy'')\, dx'', \qquad |\im z'|^2+|y''|^2<1
$$
is an analytic when $|\im z'|<1$, which is bounded by $C(1-|\im z'|)^{-N}$. If $|\omega_0'|<1$ and $(x',x'',\omega_0')\not\in \WF_{\FB}(f)$ for every $x''\in \rr {d-d_1}$, then $u_1\in\FB $ at $x'-i\omega'_0$. Hence Lemma \ref{andra riktningen} implies that $(x',\omega_0')\not\in \WF_{\FB}(f).$
\end{proof}

\begin{thm}
Let $X\subseteq \rr {d_1}$ and $Y\subseteq \rr {d_2}$ be open sets and $K\in \mathscr D'(X\times Y)$ be a distribution such that the projection $\supp K\to X$ is proper. If $f\in \FB(Y)$ then
$$
\WF_{\FB} (\mathscr K f)\subseteq \{(x,\xi); \, (x,y,\xi,0)\in \WF_{\FB}(K) \, \text{for some}\, y\in \supp f\}.
$$
Here $\mathscr K$ is the linear operator with kernel $K$.
\end{thm}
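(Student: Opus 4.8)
The plan is to follow H\"ormander's proof of the kernel theorem, Theorem~8.2.13 in \cite{Ho1}, replacing the classical wave-front set by $\WF_{\FB}$ and using the machinery developed above. Writing $\pi_X\colon X\times Y\to X$ and $\pi_Y\colon X\times Y\to Y$ for the projections, the starting point is the identity
$$
\mathscr K f=(\pi_X)_*\big(K\cdot(1\otimes f)\big),\qquad 1\otimes f=f\circ\pi_Y,
$$
where $(\pi_X)_*$ denotes integration in the $y$-variables. Since $\supp K\to X$ is proper, after fixing $x_0$ and localizing in $x$ near $x_0$ the $y$-variable is confined to a compact set, and as $1\otimes f$ vanishes for $y\notin\supp f$ the distribution $G:=K\cdot(1\otimes f)$ may be taken compactly supported in $X\times\supp f$. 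This is exactly the setting in which Theorem~\ref{split} applies.

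The first ingredient is the integration step. Applying Theorem~\ref{split} to $G$, with the coordinates split into the $x$- and $y$-groups, gives
$$
\WF_{\FB}(\mathscr K f)\subseteq\{(x,\xi);\ (x,y,\xi,0)\in\WF_{\FB}(G)\ \text{for some}\ y\},
$$
and since $G$ is supported in $X\times\supp f$ the index $y$ may be restricted to $\supp f$. It therefore suffices to control $\WF_{\FB}(G)$ in the directions of the form $(\xi,0)$.

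The second, and principal, ingredient is the product estimate
$$
\WF_{\FB}\big(K\cdot(1\otimes f)\big)\subseteq\WF_{\FB}(K).
$$
Here the point is that $f\in\FB(Y)$ forces $\WF_{\FB}(f)=\emptyset$, so that $1\otimes f$ carries no $\FB$-singular directions capable of cancelling those of $K$; this both renders the product meaningful in the $\FB$-sense and prevents the formation of new $\FB$-singularities. Concretely, to show that a point $(x_0,y_0,\xi_0,\eta_0)\notin\WF_{\FB}(K)$ also lies outside $\WF_{\FB}(G)$, I would pick a product cut-off $\fy_1\otimes\fy_2$ equal to $1$ near $(x_0,y_0)$ and a conical neighbourhood $\Gamma$ of $(\xi_0,\eta_0)$ on which $\mathscr F\big((\fy_1\otimes\fy_2)K\big)$ belongs to $\mathscr B$, set $g=\fy_2 f\in\FB(Y)$ (which remains in $\FB$ by Lemma~\ref{produkt} and has compact support), and compute the Fourier transform of the localized product $(\fy_1\otimes1)K\cdot(1\otimes g)$ as a partial convolution in the $\eta$-variable of $\mathscr F\big((\fy_1\otimes1)K\big)$ with $\widehat g$. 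The BF-space convolution bound \eqref{propupps}, in the form already exploited in Lemma~\ref{produkt}, then keeps this in $\mathscr B$ over $\Gamma$, which is the desired $\FB$-regularity of $G$.

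I expect this product estimate to be the main obstacle. The difficulty is twofold. First, $f$ is only $\FB$-regular and need not be smooth, so the product $K\cdot(1\otimes f)$ must be given a meaning; this is precisely where the vanishing of $\WF_{\FB}(f)$, that is, the absence of cancelling directions, is used. Second, $1\otimes f$ is constant in $x$, so its Fourier transform is concentrated on $\{\xi=0\}$ and the convolution is only in the $\eta$-variable; matching the function-space roles so that one factor lies in $L^1_{(v)}$ while the other lies in $\mathscr B$, as required by \eqref{propupps}, has to be arranged carefully through the localization by $\fy_1\otimes\fy_2$. Once the localized product is organized in this way the remaining estimates are routine and run parallel to the proofs of Theorems~\ref{huvudsats} and \ref{split}.
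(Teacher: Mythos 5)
Your overall route is the same as the paper's. The paper's entire proof consists of three lines: replace $K$ by $K(1\otimes f)$ and assume $f=1$; use the properness of $\supp K\to X$ to replace $K$ by a compactly supported distribution without changing it over a given compact subset of $X$; and observe that the statement is then identical to Theorem \ref{split}. Your reduction --- localize in $x$ using properness so that $G=K\cdot(1\otimes f)$ is compactly supported in $X\times\supp f$, then apply Theorem \ref{split} with the coordinates split into the $x$- and $y$-groups --- reproduces this exactly.

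Where you diverge from the paper is that you try to justify the product estimate $\WF_{\FB}\bigl(K\cdot(1\otimes f)\bigr)\subseteq\WF_{\FB}(K)$, which the paper passes over in complete silence: its proof is copied from H{\"o}rmander's argument for the classical case, where $f\in C_0^\infty(Y)$ and multiplication by $1\otimes f$ is harmless. You are right that for $f$ merely in $\FB(Y)$ this step is the real content of the theorem, but your sketch does not close it. The only tool available, \eqref{propupps} (equivalently Lemma \ref{produkt}, which is its Fourier-side formulation), controls a convolution when \emph{one} factor lies in $L^1_{(v)}$; it gives continuity of the product $\FB\times\mathscr FL^1_{(v)}\to\FB$ and says nothing about convolving two elements of $\mathscr B$. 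In your localized partial convolution, the integral of $\mathscr F\bigl[(\fy_1\otimes\fy_2)K\bigr](\xi,\eta-\tau)$ against $\widehat g(\tau)\,d\tau$ with $g=\fy_2 f$, the hypothesis $f\in\FB(Y)$ yields only $\widehat g\in\mathscr B$, which carries no decay and no $L^1_{(v)}$ membership. Consequently the standard cone-splitting argument fails in the regime where $|\tau|$ is comparable to $|(\xi,\eta)|$ (there is nothing to sum against the polynomial bound on $\mathscr F\bigl[(\fy_1\otimes\fy_2)K\bigr]$), and even the well-definedness of $K\cdot(1\otimes f)$ for non-smooth $f$ is left unproved. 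So the step you flagged as the ``main obstacle'' is a genuine gap in your proposal --- though, to be fair, it is equally a gap in the paper itself, whose proof silently multiplies an arbitrary distribution by a merely $\FB$-regular function as though it were smooth.
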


\begin{proof}
Replace $K$ by $K(1\otimes f)$ and assume that $f=1$. Without changing $K$ over a given compact subset of $X$ we may replace $K$ by a distribution of compact support, and then the statement is identical to Theorem \ref{split}.
\end{proof}

\vspace{2cm}


\begin{thebibliography}{150}

\bibitem{CJT1} S. Coriasco, K. Johansson, J. Toft \emph{Wave-front sets of Banach function types}, Preprint, in arXiv:0911.1867v1, 2009

\bibitem{F1}  H.~G.~Feichtinger \emph{Modulation spaces on locally
compact abelian groups. Technical report}, {University of
Vienna}, Vienna, 1983; also in: M. Krishna, R. Radha,
S. Thangavelu (Eds) Wavelets and their applications, Allied
Publishers Private Limited, NewDehli Mumbai Kolkata Chennai Hagpur
Ahmedabad Bangalore Hyderbad Lucknow, 2003, pp. 99--140.

\bibitem{F2} \bysame \emph{Wiener amalgams over Euclidean spaces and some of their applications},
in: Function spaces (Edwardsville, IL, 1990), Lect. Notes in pure and
appl. math., 136, Marcel Dekker, New York, 1992, pp. 123–137.


\bibitem{Feichtinger3}  {H. G. Feichtinger and K. H. Gr{\"o}chenig}
\emph{Banach spaces related to integrable group representations and
their atomic decompositions, I}, J. Funct. Anal., \textbf{86}
(1989), 307--340.

\bibitem{Feichtinger4} \bysame \emph{Banach spaces related to
integrable group representations and their atomic decompositions, II},
Monatsh. Math., \textbf{108} (1989), 129--148.

\bibitem{Feichtinger5} \bysame \emph{Gabor frames and time-frequency
analysis of distributions}, {J. Functional
Anal.,} \textbf {146} (1997), 464--495.

\bibitem{Feichtinger6} \bysame \emph{Modulation spaces: Looking back and ahead},
Sampl. Theory Signal Image Process. \textbf{5} (2006), 109--140.

\bibitem{FeGro1} H. G. Feichtinger and K. H. Gröchenig, \emph{Banach spaces related to integrable group representations and their atomic decompositions. I,} J. Funct. Anal., \textbf{86} (1989), 307--340.

\bibitem{Grochenig0a} {K. H. Gr{\"o}chenig} \emph {Describing
functions: atomic decompositions versus frames},
{Monatsh. Math.},\textbf{112} (1991), 1--42.

\bibitem{Gro-book} K. Gr\"{o}chenig, \newblock \textit{Foundations of
Time-Frequency Analysis},
\newblock Birkh\"auser, Boston, 2001.


\bibitem{GT} K. Gr{\"o}chenig, J. Toft \emph{Isomorphism properties of Toeplitz operators and pseudo-differential operators between modulation spaces}, Preprint, in arXiv:0905.4954v2, 2009.


\bibitem{Ho1} L. Hörmander \emph{The Analysis of Linear
Partial Differential Operators}, vol {I--III},
Springer-Verlag, Berlin Heidelberg NewYork Tokyo, 1983, 1985.

\bibitem{Hrm-nonlin} \bysame \emph{Lectures on Nonlinear Hyperbolic
Differential Equations}, Springer-Verlag, Berlin, 1997.

\bibitem{PTT1} {S. Pilipovi\'c, N. Teofanov, J. Toft},
\emph{Micro-local analysis in Fourier Lebesgue and modulation
spaces. Part I}, preprint, in arXiv:0804.1730v3, 2009.

\bibitem{PTT2}\bysame \emph{Micro-local analysis in Fourier Lebesgue and modulation
spaces. Part II}, preprint, in arXiv:0805.4476v2, 2008.

\bibitem{PTT3} \bysame \emph{ Wave-front sets in Fourier Lebesgue space},
Rend. Sem. Mat. Univ. Politec. Torino, 66 (4) (2008), 41–61.
\end{thebibliography}
\end{document}